\NeedsTeXFormat{LaTeX2e}

\documentclass{lms}

\usepackage{amsmath, amsfonts, amscd}
\usepackage{graphics} \usepackage{color} \usepackage{epsfig} \usepackage[all]{xy}
\usepackage{graphicx,amssymb}

\newtheorem{theorem}{Theorem}[section] 
\newtheorem{lemma}[theorem]{Lemma}     
\newtheorem{corollary}[theorem]{Corollary}
\newtheorem{proposition}[theorem]{Proposition}

\newnumbered{conjecture}[theorem]{Conjecture}  
\newnumbered{definition}[theorem]{Definition}
\newnumbered{remark}[theorem]{Remark}
\newnumbered{claim}[theorem]{Claim}
\newnumbered{question}[theorem]{Question}
\newnumbered{example}[theorem]{Example}

\newtheorem{thmx}{Theorem}

\newcommand{\Ozsvath}{{Ozsv{\'a}th} }
\newcommand{\Szabo}{{Szab{\'o}} }

\newcommand{\N}{\ensuremath{\mathbb{N}}} 
\newcommand{\Z}{\ensuremath{\mathbb{Z}}}
\newcommand{\Q}{\ensuremath{\mathbb{Q}}}

\newcommand{\s}{\ensuremath{\mathfrak{s}}}
\newcommand{\e}{\ensuremath{\eta}}
\newcommand{\la}{\ensuremath{\lambda}}

\newcommand{\g}{\ensuremath{\gamma}}
\newcommand{\cc}{\mathcal{C}}

\DeclareMathOperator{\Spc}{Spin^c}
\DeclareMathOperator{\fr}{fr}
\DeclareMathOperator{\di}{d}

\title[Knot traces and concordance]
 {Knot traces and concordance} %

\author{A. N. Miller and L. Piccirillo }

\classno{57M25}

\extraline{The second author was supported by an NSF graduate research fellowship.}

\begin{document}
\bibliographystyle{alpha}

\maketitle

\begin{abstract}
We give a method for constructing many pairs of distinct knots $K_0$ and $K_1$ such that the two 4-manifolds obtained by attaching a 2-handle to $B^4$ along $K_i$ with framing zero are diffeomorphic.  We use the d-invariants of Heegaard Floer homology to obstruct the smooth concordance of some of these $K_0$ and $K_1$, thereby disproving a conjecture of Abe. As a consequence, we obtain a proof that there exist patterns $P$ in solid tori such that $P(K)$ is not always concordant to $P(U) \# K$ and yet whose action on the smooth concordance group is invertible. 
\end{abstract}

\section{Introduction}

\begin{conjecture}[(Akbulut-Kirby Conjecture, 1978. Problem 1.19 of \cite{Kir97})]
If $K$ and $K'$ have homeomorphic 0-surgeries,  then $K$ and $K'$ are smoothly concordant.  
\end{conjecture}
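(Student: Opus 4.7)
The plan is to disprove this conjecture by producing explicit counterexamples, which is the strategy signaled in the abstract. I would proceed in three stages. First, I would develop a handle-calculus technique for producing many pairs of distinct knots $K_0,K_1\subset S^3$ whose zero-framed $4$-dimensional traces $X_0(K_0)$ and $X_0(K_1)$ are diffeomorphic. Since the zero-surgery $S^3_0(K)$ is the boundary of $X_0(K)$, any such pair automatically has diffeomorphic, hence homeomorphic, $0$-surgeries, so if one of the resulting pairs fails to be smoothly concordant then the conjecture is falsified. A natural route is a dualizable-pattern / trace-embedding scheme: embed a carefully chosen knot in $X_0(K_0)$ as a shadow of the cocore of the $2$-handle, then reread the resulting handle diagram to recover a knot $K_1$ whose $0$-trace is the same $4$-manifold. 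Verifying that $K_0$ and $K_1$ actually differ as knots in $S^3$ is then an elementary matter of computing, say, their Alexander polynomials.

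Second, I would want enough flexibility in the construction to build a family of such pairs in which classical concordance obstructions (signature, Arf invariant, Alexander-type conditions) do not already force concordance and in which explicit Heegaard Floer computations remain tractable. This is essentially a combinatorial bookkeeping problem: one chooses a versatile starting knot together with a suitable auxiliary pattern.

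Third, and most delicately, I would exhibit a concrete pair $(K_0,K_1)$ in this family that is not smoothly concordant. Because $K_0$ and $K_1$ share the same $0$-surgery, any $d$-invariant of the $0$-surgery itself is automatically the same for both, so those are useless. The natural candidate, explicitly pointed to by the abstract, is the $d$-invariant of a finite cyclic branched cover $\Sigma_n(K_i)$ in a chosen $\Spc$ structure: these correction terms are known smooth concordance obstructions via the fact that a slice disk lifts to a rational homology ball. The task is to set up an efficient surgery description for $\Sigma_n(K_0)$ and $\Sigma_n(K_1)$ and push the Heegaard Floer computation through for a carefully chosen pair, checking that the obstructions are incompatible with a concordance. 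I expect this Heegaard Floer calculation to be the main obstacle in actually realizing the scheme. Putting the three stages together yields knots with diffeomorphic $0$-surgeries but distinct smooth concordance classes, disproving the conjecture; because the construction can naturally be arranged inside a solid torus it simultaneously feeds into the invertible-pattern corollary announced in the abstract.
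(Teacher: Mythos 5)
Your proposal correctly treats the statement as something to refute, and your three-stage plan --- build pairs of knots with diffeomorphic $0$-traces via a dualizable-pattern construction, distinguish them with Alexander polynomials, and obstruct smooth concordance using the Ozsv\'ath--Szab\'o $d$-invariants of double branched covers (via the rational homology cobordism induced by a concordance) --- is exactly the route the paper takes to prove Theorem A, which supplies such counterexamples (indeed ones that are not concordant even up to reversal, refuting the stronger conjecture of Abe as well). So your approach is essentially the same as the paper's.
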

One might view this conjecture as motivated by the following two ideas. First, the  0-surgery of a knot determines fundamental concordance invariants such as the Tristram-Levine signature, the Alexander polynomial, and the algebraic concordance class of a knot, as well as more involved invariants such as Casson-Gordon signatures, metabelian twisted Alexander polynomials, and those associated to the filtration of \cite{COT03}. In fact, just the existence of a homology cobordism between the 0-surgeries of two knots implies that the algebraic concordance classes of those knots agree. 
 Secondly and perhaps more fundamentally, it is a well-known result (see Question 1.19 of \cite{Kir97}, \cite{AJOT13}) that assuming the smooth 4-dimensional Poincar{\'e} conjecture, for $K$ and $K'$ having homeomorphic 0-surgeries $K$ is smoothly slice if and only if $K'$ is smoothly slice.

Early work concerning the Akbulut-Kirby conjecture is due to  Livingston and Kirk-Livingston, who gave examples of knots $K$ for which metabelian invariants of $K \# -K^r$ obstruct the topological concordance of $K$ and and its reverse $K^r$ \cite{Liv83}, \cite{KL99b}. Since knot surgeries are insensitive to the orientation of the underlying knot this provided counterexamples to the original Akbulut-Kirby conjecture
and led to the following revision. 
\begin{conjecture}[(Revised Akbulut-Kirby Conjecture)]
\label{Conj:AKCrev}
If $K$ and $K'$ have $S^3_0(K) \cong S^3_0(K')$ then, up to reversing the orientation of either knot, $K$ and $K'$ are smoothly concordant.  
\end{conjecture}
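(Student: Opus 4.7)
The plan is to promote the given homeomorphism of 0-surgeries to a smoothly embedded concordance annulus, using the orientation-reversal freedom to handle the meridian ambiguity. The key structural observation is that a knot $K$ is determined up to reversal by the pair $(S^3_0(K), \mu_K)$: reattaching a 0-framed 2-handle to $\mu_K$ undoes the original surgery and recovers $S^3$ together with $K$ as the dual to the meridian's cocore.

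First, I would analyze how the given homeomorphism $\phi\colon S^3_0(K) \to S^3_0(K')$ acts on meridians. Since $H_1 = \Z$ on each side is generated by the meridian class, $\phi_\ast[\mu_K] = \pm[\mu_{K'}]$; replacing $K'$ by its reverse when the sign is negative, I may assume $\phi_\ast[\mu_K] = [\mu_{K'}]$. I would then form the closed 4-manifold $W := X_0(K) \cup_\phi (-X_0(K'))$. The cocores of the two 2-handles, one from each trace, are smoothly embedded disks whose boundaries lie in $S^3_0(K) \cong S^3_0(K')$ and represent the matched meridian class; after a small ambient perturbation their boundaries coincide and they glue to a smoothly embedded 2-sphere $S \subset W$ of self-intersection $0$. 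The intersection form of $W$ is a hyperbolic plane, so $W$ is homeomorphic to an $S^2$-bundle over $S^2$, and the desired concordance should appear as an embedded annulus in the complement of $S$ together with the two auxiliary 4-balls sitting inside each trace.

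The hard part is extracting this annulus smoothly. Topologically, Freedman's theorem makes $W$ a standard $S^2$-bundle over $S^2$ and lets one isotope $S$ to a fiber, whereupon the annulus can be read off by inspection; this already yields the topological version of the statement. Smoothly, one needs both that $W$ is smoothly standard and that $S$ is smoothly isotopic to a fiber, and these are exactly the kind of small exotic 4-manifold phenomena that cannot be controlled with current technology. A more speculative angle is to use Heegaard Floer invariants (in parallel with the $d$-invariant arguments the paper develops later) to rule out the exotic alternatives directly, perhaps by arguing that any hypothetical non-concordance obstruction must already appear as a mismatch between invariants of $S^3_0(K)$ and $S^3_0(K')$. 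The paper's own examples, however, show that closely related statements for 0-trace equivalent knots fail, which is a strong warning that any successful proof must carefully exploit both the full strength of 0-surgery equivalence and the reversal freedom; for this reason I expect the genuine obstruction to lie squarely in smooth 4-manifold topology, and substantial new input to be needed.
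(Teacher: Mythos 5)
The statement you were asked about is a \emph{conjecture}, not a theorem, and the paper contains no proof of it; on the contrary, it records that the conjecture is \emph{false}. Yasui's theorem (quoted as Theorem 1.5 in the paper, from \cite{Yas15}) exhibits infinitely many pairs $K$, $K'$ with homeomorphic 0-surgeries and different smooth 0-shake genera, hence not smoothly concordant even up to reversal. The paper's own Theorem A goes further and disproves the strictly stronger statement in which ``homeomorphic 0-surgeries'' is replaced by ``diffeomorphic 0-traces.'' So no argument along your lines can succeed, and the useful exercise is to locate exactly where your outline breaks.

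There are two concrete failure points. First, your reduction to ``$\phi_*[\mu_K]=[\mu_{K'}]$'' only controls the meridian homologically; for the concordance construction you would need $\phi(\mu_K)$ to be \emph{isotopic} to $\mu_{K'}$ in $S^3_0(K')$, and by Gordon--Luecke that would already force $K$ and $K'$ to be the same knot up to reversal. The interesting homeomorphisms are precisely those that scramble the meridian: in the paper's dualizable-pattern construction the homeomorphism of Theorem 3.12 sends $\mu_{P(U)}$ to $-\mu_{V^*}$, the meridian of the complementary solid torus, not to $\mu_{P^*(U)}$. Second, the step where you pass from the sphere $S\subset W$ to a smooth concordance annulus is not merely ``beyond current technology'' --- it is obstructed in actual examples: the paper's $d$-invariant computations (Proposition 5.5 together with Examples 5.7 and 5.8) show that $\Sigma_2(K_k)$ and $\Sigma_2(K_k')$ are not rationally homology cobordant, so no concordance annulus exists for those pairs even though the 0-traces are diffeomorphic. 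You correctly sensed at the end that the smooth 4-dimensional step is the crux, but the right conclusion is not that new input is needed to finish the proof; it is that the conjecture fails, and the paper's contribution is to produce and certify counterexamples to its trace-level refinement.
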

In 1980 Brakes gave a construction of pairs of knots which share a 0-surgery and yet are distinct as unoriented knots \cite{Bra80}. Using this construction, Gompf-Miyazaki showed the following. 
\begin{proposition}[(\cite{GM95})]
At most one of the slice-ribbon conjecture and Conjecture ~\ref{Conj:AKCrev} is true.
\end{proposition}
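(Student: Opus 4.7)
The plan is a contradiction argument. Assume both the slice-ribbon conjecture and Conjecture~\ref{Conj:AKCrev} hold, and look for a Brakes pair that cannot simultaneously be compatible with both assumptions.

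First, I would spell out Brakes' construction: starting from a knot $K$ together with an auxiliary arc/twisting datum, one produces a partner $K'$ with $S^3_0(K)\cong S^3_0(K')$, and the construction is flexible enough that the resulting $K'$ need not be isotopic to $K$ or to $K^r$. So a first subgoal is to isolate specific inputs for which one can verify that $K$, $K^r$, $K'$, and $K'^r$ are genuinely distinct as oriented knots, typically via classical invariants like the Alexander polynomial, signature, or—if the input is a satellite—invariants of the pattern.

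Second, under the two hypotheses, the conclusion of Conjecture~\ref{Conj:AKCrev} forces $K$ to be concordant to $K'$ or to $K'^r$, so one of the two connected sums $J := K\#(-K')$ or $J := K\#(-K'^r)$ is smoothly slice. Applying slice-ribbon then yields a ribbon disk for $J$, equivalently a smooth 2-handlebody presentation of $B^4\setminus\nu(J)$ with no 1-handles when handles are attached to the $0$-handle. The plan is then to show that a judicious choice of input to Brakes' construction produces a $J$ that cannot be ribbon, for instance because the Alexander module of $J$, a Casson-Gordon signature, or a metabelian invariant takes a value incompatible with the existence of a ribbon disk.

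The main obstacle is the last step: known ribbon obstructions are essentially slice obstructions, so one must exploit some finer feature of the specific Brakes pair rather than pit ``ribbon'' against ``slice'' directly. Gompf and Miyazaki circumvent this by arranging that the Brakes partner $K'$ is \emph{already} known to be slice (or forces $J$ to be slice by construction) while failing to be ribbon for an ad hoc reason tied to the topology of the 0-surgery, so that the conjunction of the two conjectures forces a nonribbon knot to be ribbon. Once such an input is found, the contradiction is immediate and the proposition follows.
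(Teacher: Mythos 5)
Your overall frame (assume both conjectures and derive a contradiction from a pair with homeomorphic $0$-surgeries) is the right shape, and indeed the paper offers no proof of its own here — it simply cites \cite{GM95} — but the step that actually carries the argument is missing from your plan, and your closing description of how Gompf--Miyazaki supply it is not correct. The genuine argument exhibits a pair $K$, $K'$ with $S^3_0(K)\cong S^3_0(K')$ in which $K$ is ribbon (hence slice) and $K'$ is proved \emph{unconditionally} not to be ribbon, using a ribbon-specific obstruction of the kind developed by Miyazaki \cite{Miy94} for satellite and fibered knots (the same sort of obstruction the paper later invokes for the Abe--Tagami analogue); this is exactly a counterexample to your assertion that ``known ribbon obstructions are essentially slice obstructions.'' Then Conjecture~\ref{Conj:AKCrev} would give that $K'$ is concordant to $K$ or $(K)^r$, and since sliceness is insensitive to reversal and is preserved by concordance to a slice knot, $K'$ would be slice; the slice--ribbon conjecture would then make $K'$ ribbon, contradicting the unconditional non-ribbonness. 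Crucially, the sliceness of $K'$ is only \emph{conditional} on Conjecture~\ref{Conj:AKCrev}. Your final paragraph instead has the partner ``already known to be slice'' while ``failing to be ribbon''; if such a knot could be exhibited it would refute the slice--ribbon conjecture outright, with no reference to Conjecture~\ref{Conj:AKCrev} at all — no such example is known, and this is not what Gompf--Miyazaki do.

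Your intermediate route is also a dead end for the reason you yourself flag: after using both conjectures to conclude that $J=K\#(-K')$ or $K\#(-(K')^r)$ is ribbon, you propose to obstruct this with the Alexander module, Casson--Gordon signatures, or metabelian invariants, but these are slice obstructions and therefore cannot rule out ribbonness of a knot that is slice under your standing assumptions; they can never separate ribbon from slice. (A smaller point: the Alexander polynomial is determined by the $0$-surgery, so it also cannot certify that $K$ and $K'$ are distinct, though distinctness is not really what the contradiction needs.) So the proposal identifies the correct logical scaffold but leaves the essential content — a concrete pair together with a ribbon-but-not-slice obstruction applied to the member whose sliceness is only conditional — both unproved and mischaracterized.
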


The next progress in the resolution of this conjecture came in the work of Cochran-Franklin-Hedden-Horn \cite{CFHH13}, who proved the following in 2013.

\begin{theorem}[\cite{CFHH13}]
There exist knots $K$ and $K'$ whose 0-surgeries are homology cobordant via a cobordism that preserves the homology class of a positive meridian, and yet which are not smoothly concordant. 
\end{theorem}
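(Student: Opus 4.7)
The plan is to produce pairs $(K,K')$ via a satellite (or similarly explicit) construction designed so that the $0$-surgeries are related by a concrete meridian-preserving homology cobordism, and then to obstruct smooth concordance using an invariant that is not detected by the homology cobordism class of the $0$-surgery.

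First, I would fix a pattern $P$ in a solid torus $V$ with winding number $\pm 1$ and with $P(U)$ unknotted, so that for every companion $K$ the knots $K$ and $P(K)$ share Alexander polynomial, Seifert form and Tristram--Levine signatures. For a companion $K$ to be chosen at the last step, set $K'=P(K)$. Candidate patterns of Mazur or Whitehead type are natural choices.

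Second, I would construct a cobordism $W$ from $S^3_0(K)$ to $S^3_0(K')$ directly from the geometry of $P$. The idea is to glue the exterior of $K$ in $S^3$ to the trace of the pattern in $V\times[0,1]$ and cap off by a $2$-handle realizing the $0$-framing on the satellite side; because $P$ has winding number one and $P(U)=U$, a Mayer--Vietoris computation should yield $H_*(W;\Z)\cong H_*(S^3_0(K);\Z)$, so $W$ is a homology cobordism. Tracking the meridian of $K$ through the pieces of $W$, and choosing orientations consistently, one verifies that the induced isomorphism on $H_1$ sends the positive meridian of $K$ to the positive meridian of $K'$.

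Third, I would choose $K$ so that a smooth concordance invariant refined enough to see satellite operations, such as $\tau$, $\epsilon$, or $d$-invariants of cyclic branched covers or of large surgeries, separates $K$ from $P(K)$; there are standard computations showing these invariants behave nontrivially under winding-number-one satellites when $K$ is (for instance) a torus knot or iterated cable. The main obstacle is Step 2: one must be careful enough in assembling $W$ to verify both that it is a homology cobordism and that it carries a positive meridian to a positive meridian, since the orientation of the meridian class is not preserved by an arbitrary homology cobordism and is precisely the refinement distinguishing this result from the easier statement that the $0$-surgeries are merely homeomorphic (or homology cobordant as oriented $3$-manifolds).
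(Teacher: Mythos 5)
The statement you were asked to prove is not actually proved in this paper: it is quoted as background and attributed to Cochran--Franklin--Hedden--Horn \cite{CFHH13}, so there is no internal proof to compare against; your proposal has to stand on its own. As a high-level plan it is in the spirit of known arguments (winding-number-one satellite operators with $P(U)=U$, such as the Mazur pattern, as in \cite{CFHH13} and later work of Cochran--Davis--Ray and Levine), but the essential content is missing at exactly the point you flag as the main obstacle. Your Step 2 never defines the cobordism: ``the trace of the pattern in $V\times[0,1]$'' has no clear meaning, and if it is meant to be (a neighborhood of) a concordance in $V\times[0,1]$ from $P$ to the core of $V$, no such concordance can exist for the patterns you need, since it would immediately give $P(K)\sim K$ for every $K$ and make Step 3 impossible. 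Producing a $\Z$-homology cobordism from $S^3_0(K)$ to $S^3_0(P(K))$ that preserves the positive meridian is the real theorem here; it requires an explicit 4-manifold (and in the literature either extra hypotheses on $P$, e.g.\ that $\mu_V$ normally generates $\pi_1(V\smallsetminus P)$, or a careful handle construction exploiting $P(U)=U$), after which the Mayer--Vietoris check is the easy part. Asserting that such a computation ``should yield'' a homology cobordism presupposes the object whose existence is the point.

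Two further concrete problems. First, ``Whitehead type'' patterns have winding number $0$, and for these the $0$-surgeries are in general \emph{not} homology cobordant rel meridians (a homology cobordism of $0$-surgeries preserves the algebraic concordance class, while untwisted doubling kills signatures), so that suggestion would fail outright; also the claim that $K$ and $P(K)$ share the same Seifert form is stronger than what winding number one and $P(U)=U$ give you (you get the Alexander polynomial, and---once the cobordism exists---the algebraic concordance class, not the Seifert form itself). Second, Step 3 is only a pointer: for a winding-number-one pattern all invariants determined by the $0$-surgery together with the meridian are useless by construction, so you must actually carry out a computation showing that some genuinely smooth invariant (e.g.\ $\tau$, $\epsilon$, or $d$-invariants of branched covers) separates $K$ from $P(K)$ for a specific $P$ and $K$; such computations (Hedden's formula for doubles, bordered Floer computations for the Mazur pattern) are substantive and cannot be waved through as ``standard.'' So the architecture of your argument is reasonable, but both pillars---the meridian-preserving homology cobordism and the smooth obstruction---are currently unproven.
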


Finally, Yasui disproved the revised Akbulut-Kirby conjecture with the following result. 

\begin{theorem}[(\cite{Yas15})]\label{thm:Yasui}
There exist infinitely many pairs of knots $K$ and $K'$ with homeomorphic 0-surgeries such that $K$ and $K'$ have different smooth 0-shake genera and so are not smoothly concordant, even up to reversal. 
\end{theorem}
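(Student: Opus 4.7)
The plan is to construct infinitely many pairs of knots sharing a common 0-surgery and then to separate them in the smooth concordance group (even up to orientation reversal) via a Heegaard Floer obstruction to the smooth 0-shake genus, i.e., the minimum genus of a smooth closed surface in the 0-trace $X_0(K) = B^4 \cup h_K^2$ representing a generator of $H_2$.

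First I would produce the pairs via an annulus twist construction in the spirit of Brakes, Osoinach, and Abe--Jong--Omae--Takeuchi. The key input is an annulus $A \subset S^3$ whose boundary components are a pair of knots $K, K'$, with the property that a single integral twist along $A$ converts $K$ into $K'$ while preserving $S^3_0(K)$ up to diffeomorphism; this is verified by a Kirby calculus argument that inserts a canceling handle pair and performs handle slides. Varying parameters would then produce an infinite family $(K_n, K_n')$ of such pairs.

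To distinguish them I would bound the smooth 0-shake genus $g_{sh_0}$ from both sides. On one side, I would arrange that $K_n'$ visibly bounds a low-genus surface in $B^4$, giving an upper bound for $g_{sh_0}(K_n')$ by capping off with the core of the 2-handle. On the other side, I would invoke an Ozsv{\'a}th--Szab{\'o} adjunction-type inequality in \HFp\ to establish a lower bound on $g_{sh_0}(K_n)$ controlled by $\tau(K_n)$, or more generally by the $d$-invariants of $S^3_0(K_n)$. Choosing the family so that this obstruction grows unboundedly with $n$ then produces infinitely many pairs with $g_{sh_0}(K_n) > g_{sh_0}(K_n')$.

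The implication from distinct 0-shake genera to non-concordance, even up to reversal, follows because $g_{sh_0}$ is a smooth concordance invariant: a concordance from $K$ to $K'$ produces an embedding $X_0(K') \hookrightarrow X_0(K)$ via the trace embedding trick (glue the concordance annulus to the core of the 2-handle), so any surface realizing $g_{sh_0}(K')$ pushes into $X_0(K)$, and by symmetry the two genera agree; and since $X_0(K)$ depends only on the unoriented knot, $g_{sh_0}$ is automatically invariant under reversal. The main technical obstacle is the lower bound in the third step: because $g_{sh_0}(K)$ can in principle be strictly smaller than $g_4(K)$, one cannot simply invoke $|\tau(K)| \leq g_4(K)$, but must instead derive a sharper adjunction inequality for closed surfaces in $X_0(K)$ carrying the generator of $H_2$, and then engineer the annulus-twist family so that this bound is both computable and unbounded along the family.
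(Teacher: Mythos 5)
The statement you are proving is Yasui's theorem, which the paper only quotes from \cite{Yas15} without proof, so there is no in-paper argument to compare against; judged on its own terms, your proposal contains a structural contradiction. You propose to build the pairs by annulus twisting in the Osoinach/Abe--Jong--Omae--Takeuchi style and then separate them by the smooth $0$-shake genus. But by Theorem 2.8 of \cite{AJOT13} --- quoted in this very paper --- an annulus twist on a knot admitting an annulus presentation preserves not only the $0$-surgery but the entire $0$-trace: $X_0(K) \cong X_0(K_t)$. Since the $0$-shake genus is by definition an invariant of the $0$-trace, every pair produced by your first step automatically has \emph{equal} $0$-shake genera, so the invariant you propose in the third step can never distinguish them. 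This is precisely why the paper remarks that Yasui's examples ``certainly do not have diffeomorphic 0-traces'' and why Abe's conjecture remained open after \cite{Yas15}: Yasui's construction is not an annulus twist. The paper also observes that the annulus-twist families of \cite{AJOT13}, \cite{ATag16}, \cite{ATan16} consist of knots of $4$-genus at most $1$, so the plan to ``engineer the family so the genus bound grows unboundedly'' is not available within that construction either.

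There is a second, independent problem with the proposed lower bound. You suggest bounding the $0$-shake genus of $K_n$ from below by data ``controlled by $\tau(K_n)$, or more generally by the $d$-invariants of $S^3_0(K_n)$.'' Any quantity extracted from $S^3_0(K)$ alone is identical for the two knots of a pair, since they share that manifold by construction, so it cannot produce different shake genera; the bound must use strictly more than the boundary. Moreover, the ``sharper adjunction inequality for closed surfaces in $X_0(K)$'' that you defer is the actual mathematical content of Yasui's theorem: $X_0(K)$ is a negative semi-definite manifold with boundary, the closed-manifold adjunction inequalities do not apply, and it is not known in general that $|\tau(K)|$ bounds the $0$-shake genus (indeed, whether $0$-shake genus equals $4$-genus is Kirby Problem 1.41a, highlighted in the paper). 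Yasui obtains trace-level bounds by realizing the traces as Stein domains via Legendrian representatives with large Thurston--Bennequin and rotation numbers and applying the Stein adjunction inequality to one knot of each pair, while the other knot visibly has small shake genus. Your final step --- that $0$-shake genus is a concordance invariant and is reversal-invariant, via the trace embedding trick --- is correct and matches the paper's remark, but the construction and the obstruction, as you have set them up, cannot work together.
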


Recall that for $n \in \Z$ the \emph{$n$-trace of a knot} is defined to be the 4-manifold $X_n(K)$ obtained from the four ball by attaching an $n$-framed 2-handle along a neighborhood of $K$. The  \emph{(smooth) $n$-shake genus of $K$} is then defined to be the minimal genus of a smoothly embedded surface that generates the second homology of $X_n(K)$. It is straightforward to verify that $n$-shake genus is both unchanged by reversal and a concordance invariant. The 0-shake genus is by definition an invariant of the 0-trace, so Yasui's examples certainly do not have diffeomorphic 0-traces and the following conjecture of Abe remained open.
\begin{conjecture}[(\cite{Abe16})]\label{Quest:traceconcordance}
If $K$ and $K'$ have diffeomorphic 0-traces then, up to reversing the orientation of either knot, they are smoothly concordant.
\end{conjecture}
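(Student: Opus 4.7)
The plan is to refute Conjecture~\ref{Quest:traceconcordance} by exhibiting an explicit pair $(K_0, K_1)$ of knots with $X_0(K_0) \cong X_0(K_1)$ such that $K_0$ is smoothly concordant to neither $K_1$ nor $K_1^r$. A single such pair would suffice to contradict the conjecture, so the task splits cleanly into a construction problem and an obstruction problem.

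\textbf{Construction of the pair.} I would first develop a general recipe for producing knots with a common $0$-trace. A natural framework is via \emph{dualizable patterns} in the solid torus: a pattern $P \subset S^1 \times D^2$ equipped with a distinguished second solid-torus embedding realizing a dual viewpoint, so that for any companion knot $J$ the two satellites $K_0 = P(J)$ and $K_1 = P'(J)$ have diffeomorphic $0$-traces, certified by an explicit sequence of handle slides and cancellations in a Kirby diagram of $X_0(K_0)$. This refines the annulus-twist constructions of Brakes and Osoinach used to produce common-$0$-surgery examples; the extra dualizability data is precisely what promotes ``shared $0$-surgery'' to ``shared $0$-trace.'' Concretely, the construction would identify a second $0$-framed $2$-handle attaching curve inside $X_0(K_0)$ whose complement in the trace is still a $4$-ball.

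\textbf{Obstructing concordance.} If $K_0$ and $K_1$ were smoothly concordant, then for each $n > 0$ the manifolds $S^3_n(K_0)$ and $S^3_n(K_1)$ would be smoothly homology cobordant via a cobordism matching meridians, forcing the Ozsv\'ath--Szab\'o $d$-invariants to agree in every $\Spc$ structure. I would choose the patterns $P, P'$ and the companion $J$ so that satellite formulas for the knot Floer complex make the surgery $d$-invariants of both $K_0$ and $K_1$ explicitly computable (for instance by keeping one member close to a Floer-simple or $L$-space knot), and then show that at least one $d$-invariant differs between $S^3_n(K_0)$ and $S^3_n(K_1)$. Because $d$-invariants of integer surgery depend only on the isotopy class of the unoriented knot, the same computation simultaneously distinguishes $K_0$ from $K_1^r$, closing off the reversal loophole in the conjecture.

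\textbf{Main obstacle.} The principal difficulty lies in calibration: one must produce an explicit family of dualizable patterns for which the resulting satellite pairs have genuinely different $d$-invariants, rather than differing only in classical concordance invariants that are insensitive to reversal. Verifying the Kirby-theoretic diffeomorphism of the two $0$-traces while simultaneously controlling the satellite knot Floer complex finely enough to extract a nonzero numerical discrepancy is the substantive computational work. A secondary issue is certifying that $K_0$ and $K_1$ are genuinely distinct knots and not accidentally isotopic via a non-obvious move; this can usually be dispatched by checking classical invariants such as the Alexander polynomial or signature function of the two satellites.
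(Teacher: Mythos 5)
Your construction half is essentially the paper's: the paper also builds the pairs from dualizable patterns (Brakes-style), takes $K_k=(\tau_{2k-1}J)(U)$ and its twisted dual $(\tau_{-2k-3}J)(U)$, and extends the $0$-surgery homeomorphism over the trace by an Akbulut-type handle argument (Theorem~\ref{Thm:diffeotraces}), so that part of your plan is sound and on target.

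The gap is in your obstruction. You propose to distinguish the pair by $\di$-invariants of $S^3_n(K_i)$, but a large part of that package is determined by data the two knots necessarily share. Since $X_0(K_0)\cong X_0(K_1)$ implies an orientation-preserving homeomorphism $S^3_0(K_0)\cong S^3_0(K_1)$, the correction terms $\di_{\pm 1/2}$ of the $0$-surgery agree, and by the standard identities $\di_{1/2}(S^3_0(K))=-2V_0(K)+\tfrac12$, $\di_{-1/2}(S^3_0(K))=2V_0(\overline{K})-\tfrac12$ this forces $V_0(K_0)=V_0(K_1)$ and $V_0(\overline{K_0})=V_0(\overline{K_1})$; in particular $\di(S^3_{\pm1})$ and all $\di(S^3_{1/m})$ automatically coincide. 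Worse, every correction term of every $p/q$-surgery is a function of the sequences $\{V_i(K)\},\{V_i(\overline{K})\}$, and for pairs of the type actually produced by this construction (the paper's examples all have $g_4=1$, so $V_i=0$ for $i\geq 1$ for the knots and their mirrors) \emph{all} surgery $\di$-invariants of $K_0$ and $K_1$ agree, so your obstruction vanishes identically on exactly the examples your construction yields. Your proposal never addresses why some higher $V_i$ (the only remaining room) could differ for knots sharing a $0$-trace, and arranging that is not known to be possible; this is the missing idea, not a routine calibration. The paper avoids the problem by applying $\di$-invariants to a manifold \emph{not} controlled by the $0$-surgery, namely the double branched cover $\Sigma_2(K)$: it proves a monotonicity statement $\di(\Sigma_2(K_{-k}))\leq \di(\Sigma_2(K_0))\leq \di(\Sigma_2(K_k))$ via explicit definite cobordisms (Proposition~\ref{prop:monotonicity}), anchors it with two computations ($\di(\Sigma_2(K_0))=\di(S^3_1(5_2))=-2$ by the alternating surgery formula, and $\di(\Sigma_2(K_1))=0$ because $\Sigma_2(K_1)$ bounds an explicit homology ball), and notes that $\Sigma_2$ is insensitive to reversal, which closes the orientation loophole just as effectively as your surgery-based argument would have. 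You would need either to switch to such a covering-space (or other trace-independent) obstruction, or to supply a genuinely new mechanism producing shared-trace pairs with differing $V_i$ for some $i\geq 1$.
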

The techniques of annulus twisting (see \cite{Oso06}, \cite{AJOT13}) can be used to produce pairs of knots which share a 0-trace. Abe-Tagami consider such a pair and apply a ribbon obstruction due to Miyazaki \cite{Miy94} to show the following.
\begin{proposition}[(\cite{ATag16})]
At most one of the slice-ribbon conjecture and Conjecture ~\ref{Quest:traceconcordance} is true.
\end{proposition}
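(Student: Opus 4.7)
The plan is to exhibit a single pair of knots $(K, K')$ with diffeomorphic 0-traces such that $K$ is ribbon but neither $K'$ nor its reverse $(K')^r$ is ribbon. Granted such a pair, suppose for contradiction that both the slice-ribbon conjecture and Conjecture~\ref{Quest:traceconcordance} hold. Since $K$ is ribbon, $K$ is smoothly slice, and hence so is $K^r$. By Conjecture~\ref{Quest:traceconcordance}, $K'$ is smoothly concordant to $K$ or to $K^r$, so $K'$ is itself smoothly slice (up to reversal). The slice-ribbon conjecture would then force $K'$ or $(K')^r$ to be ribbon, contradicting our choice of pair. Hence at most one of the two conjectures can hold.

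To construct the pair, I would use the annulus-twist construction of Osoinach, as extended by \cite{AJOT13}. Starting with a ribbon knot $K$ that admits an annulus presentation (so that one has an embedded annulus $A \subset S^3$ whose two boundary components give $K$ and whose core is a specified unknot), performing a $\pm 1$ annulus twist along $A$ yields a new knot $K'$ with $S^3_0(K) \cong S^3_0(K')$; moreover the annulus twist can be realized as a Rolfsen twist in a Kirby diagram of $X_0(K)$, which shows that $X_0(K) \cong X_0(K')$ as smooth 4-manifolds. Selecting a ribbon $K$ with an annulus presentation is easy; the constraint is to choose $K$ whose $K'$ partner is amenable to ribbon obstructions.

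The main technical step, and the one I expect to be the main obstacle, is obstructing ribbonness of $K'$ (and of $(K')^r$, though the obstruction I have in mind is insensitive to orientation). The idea is to apply Miyazaki's theorem \cite{Miy94}, which says, roughly, that a ribbon knot cannot be a nontrivial connected sum of two algebraically fibered knots with coprime, irreducible Alexander polynomials of the same degree. Concretely I would pick $K$ to be a connected sum $K_1 \# K_2$ of two suitable ribbon knots, presented by an annulus built from a band-sum of ribbon annuli for each summand; then the annulus twist tends to produce a $K'$ whose Alexander polynomial still factors as $\Delta_{K_1}(t)\Delta_{K_2}(t)$ (since Alexander polynomial is a 0-surgery invariant), while the annulus twist mixes the two summands together so that $K'$ becomes prime. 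If one can arrange $K_1$ and $K_2$ to be fibered with irreducible, coprime Alexander polynomials of equal degree, Miyazaki's criterion applies: $K'$, being prime and having an Alexander polynomial incompatible with primeness plus ribbonness, cannot be ribbon.

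With this pair in hand, the logical assembly described in the first paragraph gives the proposition. The delicate points to check are that the annulus twist indeed preserves the 0-trace (not just the 0-surgery), that $K'$ is genuinely distinct from $K$, and most importantly that the hypotheses of Miyazaki's obstruction are satisfied by the resulting $K'$ — in particular, verifying primeness of $K'$ and identifying its Alexander polynomial factorization are the steps where the argument is most fragile and where the explicit choice of $K_1, K_2$ and annulus must be made carefully.
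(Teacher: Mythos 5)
Your high-level plan (annulus-twist pairs plus a Miyazaki-type ribbon obstruction) is in the spirit of the argument of \cite{ATag16} that the paper cites, but the way you propose to deploy the obstruction has a structural flaw. You want a pair with $X_0(K)\cong X_0(K')$, $K$ ribbon, and $K'$ \emph{provably} not ribbon. By Theorem~\ref{Thm:slicetrace} and Corollary~\ref{Cor:slicetrace}, sliceness is determined by the $0$-trace, so such a $K'$ would automatically be smoothly slice --- no appeal to Conjecture~\ref{Quest:traceconcordance} is needed at all (your own intermediate step ``$K'$ is slice'' is unconditional). Certifying that this $K'$ is not ribbon would therefore exhibit a slice, non-ribbon knot, i.e.\ an outright disproof of the slice--ribbon conjecture, which is far stronger than the proposition and is precisely what no known obstruction can do for a knot already known to be slice. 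This is why the argument behind the proposition must be structured so that the ribbon obstruction is applied to a knot whose sliceness is only known \emph{conditionally} on the concordance conjecture (for instance, a connected sum of fibered knots built from the pair, which becomes slice only if the pair is concordant up to reversal); that is what makes the conclusion genuinely ``not both'' rather than a refutation of one conjecture. As designed, the central step of your plan --- obstructing ribbonness of a knot that is already slice --- is the open slice--ribbon problem itself and cannot be carried out.

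The specific invocation of Miyazaki's theorem also fails. The criterion you quote concerns knots that \emph{are} connected sums of (algebraically) fibered knots, yet you propose to apply it to a $K'$ that you have arranged to be prime, on the grounds that its Alexander polynomial factors as $\Delta_{K_1}(t)\Delta_{K_2}(t)$. Primeness together with a reducible Alexander polynomial is no obstruction to ribbonness: by Fox--Milnor every ribbon knot has $\Delta(t)\doteq f(t)f(t^{-1})$, and prime ribbon knots (e.g.\ $6_1$ or $8_{20}$) realize exactly such factorizations. So the assertion that ``$K'$, being prime and having an Alexander polynomial incompatible with primeness plus ribbonness, cannot be ribbon'' has no content, and the main technical step of your proposed proof does not go through.
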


We extend the techniques of \cite{Bra80} to produce pairs of knots with diffeomorphic 0-traces. We then use the $\di$-invariants of Heegaard Floer homology applied to the double branched cover of our knots in order to disprove Conjecture \ref{Quest:traceconcordance} as follows. 
\begin{thmx}\label{thm:MainTheoremA}
There exist infinitely many pairs of knots  $(K, K')$ such that $K$ and $K'$ have diffeomorphic 0-traces and yet are distinct in smooth concordance, even up to reversal of orientation. 
\end{thmx}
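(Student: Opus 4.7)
The plan is to prove Theorem A in two separate stages: first, produce a construction yielding many pairs of knots with diffeomorphic 0-traces; second, develop a concordance obstruction strong enough to distinguish the pairs produced, even up to reversal.

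For the first stage, I would generalize Brakes' construction so that it outputs pairs whose 0-traces (not just their 0-surgeries) are diffeomorphic. Brakes produces pairs $(K,K')$ by a handle move in a Kirby diagram of a common 0-surgery; the task is to upgrade this to a sequence of moves performed inside a 0-trace, in which an auxiliary 0-framed unknot is introduced, slid across the $K$ 2-handle, and then canceled, exhibiting a diffeomorphism of the two 0-traces that carries one attaching circle to the other. Such an upgrade fits naturally into the annulus twist framework of Osoinach and Abe--Jong--Omae--Takeuchi, and I would carry it out with enough free parameters (twist regions, framings, positions of auxiliary components) that the construction produces a large supply of candidate pairs.

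For the second stage, I would obstruct concordance via the $\di$-invariants of the double branched covers $\Sigma_2$. If $K$ and $K'$ are smoothly concordant, then the concordance lifts to a rational homology cobordism from $\Sigma_2(K)$ to $\Sigma_2(K')$, forcing a matching of $\di$-invariants under a suitable identification of $\Spc$-structures. Crucially, $\Sigma_2(K)\cong \Sigma_2(K^r)$ as oriented 3-manifolds, so a single $\di$-invariant inequality simultaneously obstructs concordance of $K$ to $K'$ and to $(K')^r$, handling the ``up to reversal'' case for free. To make the obstruction computable, I would arrange for the double branched covers to be $L$-spaces of plumbed type (or otherwise presentable by a negative-definite plumbing), so that $\di$-invariants are extractable from the standard plumbing recursion.

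Finally, to promote a single example to an infinite family, I would parameterize the construction by an integer (for instance a twist parameter in the annulus twist region) and check that the resulting knots are pairwise non-concordant across the family; the Alexander polynomial or the classical signature are natural invariants to monitor for this. The central obstacle is making the two stages cooperate: a priori a Brakes-type construction has no reason to yield pairs whose double branched covers are tractable plumbed $L$-spaces with distinguishing $\di$-invariants, so the technical craft of the proof lies in choosing the input data of the construction so as to simultaneously preserve the computability of $\Sigma_2$ and break the concordance symmetry between $K$ and $K'$.
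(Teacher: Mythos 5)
Your overall architecture does match the paper's: extend a Brakes-type construction so the 0-surgery identification is upgraded to a 0-trace diffeomorphism, distinguish the knots in concordance via $\di$-invariants of double branched covers, note that $\Sigma_2(K)\cong\Sigma_2(K^r)$ disposes of the reversal issue, and use classical invariants (Alexander polynomial) only to separate the pairs from one another across the family. The construction stage of your plan is viable: the paper carries it out with dualizable patterns, showing that for dualizable $P$ the Brakes homeomorphism $S^3_0(P(U))\cong S^3_0(P^*(U))$ extends over the traces by verifying the hypotheses of Akbulut's extension lemma, and the examples are the twisted patterns $\tau_n J$ of a single pattern $J$ with $J^*\simeq\tau_{-4}J$; your handle-slide/annulus-twist route would also produce such pairs (and the paper makes the connection to annulus twisting explicit in Section~\ref{section:annulustwisting}).

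The genuine gap is at the decisive step, the computation of the obstruction, and you have in effect flagged it yourself: you propose to arrange that the double branched covers be negative-definite plumbed manifolds or $L$-spaces so that $\di$-invariants come from the plumbing recursion, but nothing in a Brakes-type construction forces this, and for the paper's examples no such structure is identified or needed. The missing idea is that one never has to compute $\di(\Sigma_2(K_k))$ for general $k$: since the family is obtained by twisting along an unknot $\eta$ of linking number one with the knot, $\eta$ lifts to a single null-homologous curve $\widetilde\eta$ in $\Sigma_2$, and $\mp 1/k$ surgery on $\widetilde\eta$, expanded by a continued fraction into a chain of $\pm 2$-framed handles, gives a definite cobordism between consecutive covers; exhibiting a characteristic vector of square $-k$ then yields monotonicity of $\di(\Sigma_2)$ in the twist parameter via Theorem~\ref{thm:dinvariantfacts}. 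The family is then anchored by exactly two explicit computations of opposite sign: $Y_0=S^3_1(5_2)$, so $\di=-2$ by the alternating-surgery formula, and $\Sigma_2(K_1)$ bounds an integer homology ball, so $\di=0$. Without this monotonicity-plus-anchors mechanism (or a concrete substitute), your stage two remains a hope rather than an argument, and the "technical craft" you defer is precisely the content of the theorem. A minor additional caution: the signature and Alexander polynomial are determined by the 0-surgery, so they can only distinguish pairs from one another, never the two knots within a pair; that is how the paper uses the Alexander polynomial, and your wording should be careful not to ask more of them.
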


We also exhibit a relationship between the techniques of Brakes and those of annulus twisting in order to use our examples from Theorem \ref{thm:MainTheoremA} to show that there exist knots $K$ and $K'$ related by annulus twisting with $X_0(K) \cong X_0(K')$ but which are not smoothly concordant. \\

In order to discuss several applications of Theorem \ref{thm:MainTheoremA} we recall and reprove a result which has been known to the experts for some time (see \cite{KM78}, \cite{GS99}).

\begin{theorem}{}
$K$ is smoothly slice if and only if $X_0(K)$ smoothly embeds in $S^4$.
\label{Thm:slicetrace}
\end{theorem}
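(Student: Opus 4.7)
The plan is to prove the two directions separately. The forward direction is a direct construction: a thickened slice disk in one half of $S^4 = B^4 \cup B^4$ is precisely a $0$-framed 2-handle, yielding $X_0(K) \subset S^4$ when glued to the other half. The reverse direction then uses the core disk of the 2-handle of $X_0(K)$ as a candidate slice disk, with the main work being to identify the ambient $4$-manifold as a standard $B^4$.

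For the forward direction, suppose $K$ bounds a smooth disk $D \subset B^4$ with $\partial D = K \subset S^3 = \partial B^4$. I would regard $S^4 = B^4 \cup_{S^3} B^4_\star$ as the union of two 4-balls along the equatorial $S^3$. Since $D$ is a disk in a 4-ball, its normal bundle is trivial, so a tubular neighborhood $\nu(D) \subset B^4$ is diffeomorphic to $D^2 \times D^2$, and $\nu(D)$ meets $\partial B^4 = S^3$ in a tubular neighborhood $\nu(K)$ of $K$ with $0$-framing (the framing induced on $K$ by $\nu(D)$ is the Seifert framing, which equals $0$ for a null-homologous knot in $S^3$). Consequently $B^4_\star \cup \nu(D)$ is $B^4_\star$ with a $0$-framed 2-handle attached along $K$, namely $X_0(K)$, smoothly embedded in $S^4$.

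For the reverse direction, suppose $X_0(K) \hookrightarrow S^4$, with 0-handle $B_0$ and 2-handle $h$. The core of $h$ is a smoothly embedded disk $D_c \subset S^4$ with $\partial D_c = K \subset \partial B_0 = S^3$. Set $W := S^4 \setminus \mathrm{int}(B_0)$; then $\partial W = S^3$ and $D_c \subset W$, so it suffices to show that $W$ is a smooth 4-ball, in which case $D_c$ exhibits $K$ as smoothly slice. A Mayer--Vietoris computation applied to $S^4 = B_0 \cup W$, together with van Kampen's theorem, shows that $W$ is a simply connected contractible smooth 4-manifold with $\partial W = S^3$, i.e.\ a smooth homotopy 4-ball. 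The main obstacle is upgrading this to $W \cong B^4$ smoothly. I would do so by exploiting the cancelling-handle structure present in the ambient $S^4$: the 2-handle $h \subset W$ has an ambient geometric dual (a complementary $3$-handle in a handle decomposition of $S^4$ extending the given embedding), and cancelling this pair produces a handle decomposition of $W$ with only a $0$-handle, identifying $W$ with $B^4$. This cancellation is the form in which the result appears in the classical references \cite{KM78, GS99}; granting it, the core $D_c$ is the required smooth slice disk for $K$.
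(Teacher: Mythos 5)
Your forward direction is exactly the paper's argument (thicken the slice disk in one half of $S^4 = B_1 \cup_{S^3} B_2$ and observe $B_2 \cup \overline{\nu(D)} \cong X_0(K)$), and your identification of the core of the 2-handle as the eventual slice disk is also fine. The gap is in the key step of the reverse direction: the claim that the 2-handle $h$ admits an \emph{ambient geometric dual}, i.e.\ a 3-handle in some handle decomposition of $S^4$ extending the embedding whose cancellation against $h$ exhibits $W = S^4 \smallsetminus \mathrm{int}(B_0)$ as a single 0-handle. Nothing in the hypotheses produces such a decomposition, and in its most natural reading the claim is actually false for the knots of interest: a 3-handle attached along the level $S^3_0(K) = \partial(B_0 \cup h)$ that geometrically cancels $h$ would have attaching sphere a 2-sphere meeting the belt circle (the surgery-dual meridian of $K$) transversally once, hence a non-separating 2-sphere in $S^3_0(K)$; by Gabai's resolution of Property R, $S^3_0(K)$ is irreducible for nontrivial $K$ and contains no such sphere. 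So the cancellation you invoke cannot be where the proof lives, and as written this step assumes what must be proved.

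The conclusion you want, $W \cong B^4$, is nevertheless true, but the correct input is the Cerf--Palais uniqueness of smoothly embedded balls: $i(B_0)$ is a smoothly embedded closed 4-ball in $S^4$, hence ambiently isotopic to a standard hemisphere, so the complement of its interior is diffeomorphic to $B^4$; then the core disk, transported by this diffeomorphism (which at worst mirrors/reverses $K$, harmless for sliceness), is a slice disk. With that substitution your argument is complete, and it is genuinely different from the paper's: the paper never removes the 0-handle at all, but instead takes the PL sphere $F(S^2)$ formed by the cone on $K$ together with the core of the 2-handle, and deletes a small tubular neighborhood of the image of the cone point $i(p)$. The complement $S^4 \smallsetminus \nu(i(p))$ is manifestly a standard $B^4$ (no appeal to ball uniqueness beyond a point's tubular neighborhood), and the punctured sphere is smooth away from the deleted cone and meets the boundary $S^3$ in a copy of $K$, since the link of the cone point of the cone on $K$ is $K$. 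That trick is what lets the paper sidestep both the homotopy-ball discussion and any handle-cancellation argument entirely.
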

\begin{proof}
For the `only if' direction: Consider $S^4$ and an equatorial $S^3$ therein, which decomposes $S^4$ into the union of two 4-balls $B_1$ and $B_2$. Consider $K$ sitting in this $S^3$. Since $K$ is smoothly slice, we can find a smoothly embedded disk $D_K$ which $K$ bounds in $B_1$. Observe now that $B_2\cup \overline{ \nu (D_K)} \cong X_0(K)$ is smoothly embedded in $S^4$. 

For the `if' direction: Let $F:S^2\to X_0(K)$ be a piecewise linear embedding such that the image of $F$ consists of the union of the cone on $K$ with the core of the two handle. Notice that $F$ is smooth away from the cone point $p$. Let $i:X_0(K)\to S^4$ be a smooth embedding. Then $(i\circ F)$ is a piecewise linear embedding of $S^2$ in $S^4$, which is smooth away from $i(p)$. Note that $W:=S^4\smallsetminus\nu(i(p))\cong B^4$ and that the restriction of $(i \circ F)$ to the complement of a small neighborhood of $F^{-1}(p)$ in $S^2$ is a smooth embedding of $D^2$ in $W \cong B^4$. Further, if we choose this neighborhood to be the inverse image of a sufficiently small neighborhood of $i(p)$ we have that $(i \circ F)(D^2 \smallsetminus \nu(F^{-1}(p)))$ intersects $\partial W$ in the knot $K$. 
\end{proof}
An identical proof shows that $K$ is topologically slice if and only if $X_0(K)$ topologically embeds in $S^4$. 

\begin{corollary}
Let $K$ and $K'$ be knots with $X_0(K)$ diffeomorphic to $X_0(K')$. Then $K$ is smoothly slice if and only if $K'$ is smoothly slice. 
\label{Cor:slicetrace}
\end{corollary}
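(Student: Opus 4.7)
The plan is to read off this corollary as an immediate consequence of Theorem \ref{Thm:slicetrace}, applied in both directions. The key observation is that Theorem \ref{Thm:slicetrace} characterizes smooth sliceness of $K$ purely in terms of an intrinsic diffeomorphism-invariant property of the 4-manifold $X_0(K)$, namely whether $X_0(K)$ admits a smooth embedding into $S^4$. Since the hypothesis of the corollary is that $X_0(K)$ and $X_0(K')$ are diffeomorphic, this property must hold for one if and only if it holds for the other.

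More explicitly, I would argue as follows. Suppose $K$ is smoothly slice. By the `only if' direction of Theorem \ref{Thm:slicetrace}, there exists a smooth embedding $j : X_0(K) \hookrightarrow S^4$. Let $\varphi : X_0(K') \to X_0(K)$ be the given diffeomorphism. Then $j \circ \varphi : X_0(K') \hookrightarrow S^4$ is a smooth embedding, so by the `if' direction of Theorem \ref{Thm:slicetrace} applied to $K'$, the knot $K'$ is smoothly slice. The reverse implication is identical with the roles of $K$ and $K'$ exchanged (using $\varphi^{-1}$ in place of $\varphi$).

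There is essentially no obstacle here; the content of the corollary lies entirely in Theorem \ref{Thm:slicetrace}, and once that theorem is in hand the corollary is a formal consequence. The only thing worth pausing on is to confirm that the `if' direction of Theorem \ref{Thm:slicetrace} really uses only the existence of a smooth embedding of $X_0(K)$ into $S^4$ and nothing about how the trace was presented, which it does, so the argument passes through $\varphi$ without any subtlety.
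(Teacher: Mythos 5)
Your proof is correct and is exactly how the paper obtains this corollary: it is stated as an immediate consequence of Theorem \ref{Thm:slicetrace}, since smooth embeddability of $X_0(K)$ in $S^4$ is a diffeomorphism-invariant characterization of sliceness. Nothing further is needed.
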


Using Corollary \ref{Cor:slicetrace} we give a brief proof of the following strengthening of Theorem 3.1 of \cite{ATan16}. Note that we do not require either knot to be ribbon. 
\begin{corollary}
Let $J$ be a knot in $S^3$ admitting an annulus presentation in the sense of \cite{ATan16} and $J_n$ be the knot obtained from $J$ by the $n$-fold annulus twist for some $n \in \Z.$
Then $J$ is smoothly slice if and only if $J_n$ is smoothly slice. 
\end{corollary}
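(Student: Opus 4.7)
The plan is to reduce the statement to Corollary \ref{Cor:slicetrace} by exhibiting a diffeomorphism $X_0(J) \cong X_0(J_n)$. Once this is in hand, Corollary \ref{Cor:slicetrace} immediately yields that $J$ is smoothly slice if and only if $J_n$ is smoothly slice; the strengthening over Theorem 3.1 of \cite{ATan16} comes from bypassing Miyazaki's ribbon obstruction in favor of the $0$-trace slice criterion, so no ribbon hypothesis on $J$ (or $J_n$) is required.

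To show $X_0(J) \cong X_0(J_n)$, I would unpack the annulus presentation: it realizes $J$ inside a neighborhood of an embedded annulus $A \subset S^3$ with $\partial A = c_1 \cup c_2$ a 2-component unlink, together with a band. By definition, the $n$-fold annulus twist $J_n$ is the image of $J$ under the identification $S^3 \cong S^3_{1/n,-1/n}(c_1 \cup c_2)$, which holds precisely because $c_1 \cup c_2$ is an unlink. Starting from the Kirby diagram for $X_0(J)$ (namely the $0$-framed knot $J$), I would introduce $c_1$ and $c_2$ together with $0$-framed meridional canceling unknots so that the 4-manifold is unchanged, then carry out standard Kirby moves --- handle slides of $J$ over $c_1$ and $c_2$ together with Rolfsen twists absorbing the $\pm 1/n$ components --- to transform the diagram into that of $X_0(J_n)$. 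A closely related manipulation appears in \cite{AJOT13} in the course of proving the analogous $0$-surgery statement, and its adaptation to the $0$-trace level is implicit in \cite{ATan16}.

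The main obstacle is the Kirby-calculus bookkeeping, in particular verifying that the $0$-framing on $J$ is preserved under the moves that carry $J$ to $J_n$. Once this is done the corollary follows immediately by invoking Corollary \ref{Cor:slicetrace}.
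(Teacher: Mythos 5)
Your overall reduction is exactly the paper's: establish $X_0(J)\cong X_0(J_n)$ and then invoke Corollary~\ref{Cor:slicetrace}. The paper obtains the first ingredient by simply quoting Theorem~2.8 of \cite{AJOT13}, which is already the $0$-trace statement (not merely the $0$-surgery statement, as your attribution suggests; the $0$-surgery statement is Osoinach's \cite{Oso06}).

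The gap is in your proposed derivation of $X_0(J)\cong X_0(J_n)$. Introducing $c_1$ and $c_2$ with framings $\pm 1/n$ and then ``absorbing'' them by Rolfsen twists is a manipulation of rational surgery descriptions of $3$-manifolds: rationally framed components do not correspond to $2$-handle attachments, and a Rolfsen twist does not preserve the $4$-manifold described by a Kirby diagram, only its boundary. So the moves you describe (at best) reprove $S^3_0(J)\cong S^3_0(J_n)$, but they say nothing about the traces; promoting that boundary homeomorphism to a diffeomorphism of $0$-traces is precisely the nontrivial content of Theorem~2.8 of \cite{AJOT13}, proved there by genuine handle moves and an extension argument in the spirit of Lemma~\ref{Lem:akbulutextension}, and it cannot be dismissed as bookkeeping --- indeed the framing-preservation issue you flag as ``the main obstacle'' is exactly the part left undone. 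Two smaller inaccuracies: in an annulus presentation the relevant surgery framings on $c_1,c_2$ are the ones induced by the annulus, and the reason the $(\pm 1/n)$-surgeries return $S^3$ is that the two curves cobound an embedded annulus, not that they form an unlink. The quickest correct completion is the paper's one-line proof: cite Theorem~2.8 of \cite{AJOT13} for $X_0(J)\cong X_0(J_n)$ and apply Corollary~\ref{Cor:slicetrace}; your observation that this bypasses Miyazaki's ribbon obstruction and needs no ribbon hypothesis is correct and is the point of the corollary.
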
 
\begin{proof}
By Theorem 2.8 of \cite{AJOT13}, $X_0(J)$ is diffeomorphic to $X_0(J_n)$.
\end{proof}

Let  $\sim$ denote smooth concordance and $\cc:= \{\text{knots in } S^3\}/ \sim$ denote the smooth concordance group of knots.  Recall that for a pattern $P$ in a solid torus, the operation of taking satellites by $P$ descends to a well defined map $P:\cc\to\cc$.

\begin{definition}\label{defn:invertible}
A pattern $P$ is \emph{(smoothly) invertible}  if there exists a pattern $Q$ such that
$P(Q(K)) \sim K \sim Q(P(K)$ for any $K$. 
\end{definition}
A particularly uninteresting family of invertible patterns is given by those with geometric winding number one. These \emph{connected sum patterns} act by connected sum even on the monoid of knots up to ambient isotopy in $S^3$, and hence certainly descend to invertible maps on $\cc$.  An interesting family of winding number one but not geometric winding one patterns,  called \emph{dualizable} patterns, has been discussed in \cite{Bra80} and \cite{GM95}.   Using Theorem \ref{thm:MainTheoremA} and Corollary \ref{Cor:slicetrace} we give a new proof that dualizable patterns are invertible, a result which first appeared in a stronger form \cite{GM95}. 
\begin{theorem}
Let $P$ be a dualizable pattern, and let $P^{-1}:= \overline{P^*}$ as defined in Section~\ref{subsection:definitions}. Then for any knot $K\subset S^3$,
\[P^{-1}(P(K)) \sim K \sim P(P^{-1}(K)).
\]
\label{Thm:invertible}
\end{theorem}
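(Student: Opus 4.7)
The plan is to reduce the concordance claim to a $0$-trace calculation via Corollary \ref{Cor:slicetrace}. Recall that two knots $J_1, J_2 \subset S^3$ are smoothly concordant if and only if $J_1 \# -J_2$ is smoothly slice. Hence to prove $P^{-1}(P(K)) \sim K$ it suffices to show that $P^{-1}(P(K)) \# -K$ is smoothly slice, and by Corollary \ref{Cor:slicetrace} this will follow from a diffeomorphism
\[
X_0\bigl(P^{-1}(P(K)) \# -K\bigr) \;\cong\; X_0(U),
\]
where $U$ is the unknot. The companion concordance $K \sim P(P^{-1}(K))$ then follows by the symmetric argument, using that $(P^{-1})^{-1} = P$ as patterns.

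First I would unpack the definitions in Section \ref{subsection:definitions}: a dualizable pattern $P$ in a solid torus $V$ comes equipped with an auxiliary curve $\eta$ in the complement of the pattern, arranged so that $P$ and $\eta$ interchange under a specific surgery duality, and the dual pattern $P^*$ is obtained by exchanging their roles. This is precisely the Brakes-type data that drives the construction in the proof of Theorem \ref{thm:MainTheoremA}. In particular it yields a natural presentation of $X_0(P(K))$ as $X_0(K)$ together with a 2-handle attached along $\eta$ and a canceling 3-handle.

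The heart of the argument is then a handle calculus computation. Starting from the handle diagram of $X_0(P^{-1}(P(K)))$ inherited from the dualizability data applied twice, I would attach the $0$-framed $2$-handle that realizes the connected sum with $-K$, and then perform a sequence of handle slides using the defining swallow-follow relationship between $P$ and $P^*$. The aim is to see that the $\eta$-handle arising from the outer $P^{-1}$ forms a canceling pair with the $\eta$-handle arising from the inner $P$, after which the remaining $K$ and $-K$ handles become geometrically parallel with opposite orientations and cancel as well, leaving a handle diagram of $S^2 \times D^2 = X_0(U)$.

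The main obstacle will be organizing the handle slides so that the cancellations are geometrically realizable and the orientations and framings line up; in particular, tracking the interplay between the mirroring in $P^{-1} = \overline{P^*}$ and the orientation reversal in $-K$ is likely to be the technical crux, and is the place where the definition of $\overline{P^*}$ (as opposed to $P^*$ itself) becomes essential. Once the diffeomorphism above is established, Corollary \ref{Cor:slicetrace} applied to the knots $P^{-1}(P(K)) \# -K$ and $U$ yields sliceness of the former, and hence the desired concordance $P^{-1}(P(K)) \sim K$.
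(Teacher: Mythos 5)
Your reduction to showing that $P^{-1}(P(K))\;\#\;{-K}$ is smoothly slice, and your plan to feed this into Corollary~\ref{Cor:slicetrace}, are both sound; the gap is in the concrete target you set for the handle calculus. A diffeomorphism $X_0\bigl(P^{-1}(P(K))\# -K\bigr)\cong X_0(U)$ cannot hold in general: restricting it to the boundary would give $S^3_0\bigl(P^{-1}(P(K))\# -K\bigr)\cong S^1\times S^2$, which by Gabai's Property~R would force $P^{-1}(P(K))\# -K$ to be the unknot. More elementarily, the Alexander polynomial is an invariant of the $0$-surgery, and this knot generally has nontrivial Alexander polynomial (already for $P$ a connected sum pattern with $P(U)$ nontrivial, the knot is $-P(U)\#P(U)\#K\#-K$). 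Sliceness does not make a $0$-trace standard; the whole point of Corollary~\ref{Cor:slicetrace} is that you only need to identify the trace with that of \emph{some} slice knot, not with $X_0(U)$, so the proposed cancellation down to $S^2\times D^2$ is unachievable rather than merely technically delicate. (Relatedly, the claimed presentation of $X_0(P(K))$ as $X_0(K)$ with a $2$-handle along an auxiliary curve plus a canceling $3$-handle is not right --- traces have no $3$-handles; the correct structural statement is Corollary~\ref{Cor:sumtrace}, $X_0(P(K))\cong X_0(P^*(U)\#K)$.)

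The repair, which is how the paper argues, is to absorb $K$ into the pattern and compare with a knotted but visibly slice knot. Let $K_\#$ be the geometric winding number one pattern with $K_\#(U)=K$ and set $Q=P\circ K_\#$. Then $\overline{P^*}(P(K))\# -K=\overline{(K_\#\circ P^*)}\bigl((P\circ K_\#)(U)\bigr)=\overline{Q^*}(Q(U))$, where Proposition~\ref{Prop:compositedual} identifies $Q^*=K_\#\circ P^*$. By Theorem~\ref{Thm:diffeotraces} together with Proposition~\ref{Prop:compositedual} (this is exactly Corollary~\ref{Cor:invertibleforU}) one gets $X_0\bigl(\overline{Q^*}(Q(U))\bigr)\cong X_0\bigl(Q(U)\#\overline{Q}(U)\bigr)$, and $Q(U)\#\overline{Q}(U)$ is slice because it is a knot summed with its reversed mirror --- though it is in general far from unknotted, which is why its trace, not $X_0(U)$, is the correct comparison object. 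Corollary~\ref{Cor:slicetrace} then gives that $\overline{P^*}(P(K))\# -K$ is slice, i.e.\ $P^{-1}(P(K))\sim K$, and the concordance $P(P^{-1}(K))\sim K$ follows by the symmetric argument. So your global strategy (traces plus Corollary~\ref{Cor:slicetrace}) matches the paper's, but the specific diffeomorphism at the heart of your proposal must be replaced by this comparison with the trace of a nontrivial slice knot.
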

The work of \cite{DR16} shows that there are dualizable (hence invertible) patterns $P$ whose exteriors are not homology cobordant to the exterior of any connected sum pattern. However, this is not a priori enough to establish that these patterns' actions on the smooth concordance group are distinct from that of connected sum with any knot.  However, by combining Theorem~\ref{Thm:invertible} with Theorem~\ref{thm:MainTheoremA}, we obtain the following. 

\begin{thmx}\label{thm:MainTheoremC}
There are invertible patterns which do not act by connected sum on the smooth concordance group.
\end{thmx}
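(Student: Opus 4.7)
The plan is to exhibit a single dualizable satellite pattern $P$ that is invertible but whose induced map on $\mathcal{C}$ is not of the form $[K]\mapsto [K \# J]$ for any fixed $J$. Since Theorem \ref{Thm:invertible} says every dualizable pattern is invertible, I only need to find a dualizable $P$ whose action on $\mathcal{C}$ fails to be a connected-sum action. My candidate is the pattern underlying the Brakes-type construction that proves Theorem \ref{thm:MainTheoremA}. The first step is therefore to verify that the extension of Brakes' construction used in Theorem \ref{thm:MainTheoremA} really is pattern-theoretic: that is, that the pairs $(K_0,K_1)$ produced there are of the form $(K_0,P(K_0))$ for a fixed dualizable pattern $P$ (independent of the companion), so that Theorem \ref{Thm:invertible} applies to $P$.

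The key observation is that the Brakes-type construction produces a $0$-trace preserving pair for \emph{every} choice of companion, not just for the companions used to build the nontrivial examples of Theorem \ref{thm:MainTheoremA}. In particular, feeding $P$ the unknot $U$ yields a knot $P(U)$ with $X_0(P(U))\cong X_0(U)$. Because the unknot is smoothly slice, Corollary \ref{Cor:slicetrace} then forces $P(U)$ to be smoothly slice as well, i.e.\ $[P(U)]=0$ in $\mathcal{C}$.

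Now suppose for contradiction that $P$ acts on $\mathcal{C}$ by connected sum, meaning there is a fixed knot $J$ with $P(K)\sim K\# J$ for every $K$. Plugging in $K=U$ gives $J\sim P(U)\sim U$, so $J$ is smoothly slice. But then $P(K)\sim K\#J\sim K$ for every $K$, i.e.\ $P$ would act as the identity on $\mathcal{C}$, contradicting Theorem \ref{thm:MainTheoremA}, which produces a knot $K_0$ with $P(K_0)$ not smoothly concordant to $K_0$ even up to reversal.

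The main obstacle is the preparatory step: organizing the proof of Theorem \ref{thm:MainTheoremA} so that its pairs $(K_0,K_1)$ are exhibited as $(K_0,P(K_0))$ for an explicit pattern $P$, checking that this $P$ satisfies the definition of dualizability given in Section \ref{subsection:definitions}, and confirming that the $0$-trace preserving property of the construction still holds when the companion is the unknot. Once this pattern-theoretic reformulation is in place, the contradiction above is a short formal consequence of Theorem \ref{Thm:invertible}, Corollary \ref{Cor:slicetrace}, and Theorem \ref{thm:MainTheoremA}.
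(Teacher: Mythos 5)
There is a genuine gap, and it lies in the two factual claims you attribute to the construction behind Theorem~\ref{thm:MainTheoremA}. First, the construction does \emph{not} produce pairs of the form $(K_0,P(K_0))$ with $X_0(P(K))\cong X_0(K)$; the trace-sharing statement is Theorem~\ref{Thm:diffeotraces} (together with Proposition~\ref{Prop:twist} and Corollary~\ref{Cor:sumtrace}), namely $X_0(P(K))\cong X_0\bigl(P^*(U)\# K\bigr)$, so the pairs of Theorem~\ref{thm:MainTheoremA} are $(P(U),P^*(U))$ for the dualizable patterns $P=\tau_{2k-1}J$, not a knot and its satellite by $P$. Consequently your ``key observation'' that $X_0(P(U))\cong X_0(U)$, hence $[P(U)]=0$ by Corollary~\ref{Cor:slicetrace}, is false for these patterns: Section~\ref{section:interestingfamily} shows every $(\tau_nJ)(U)$ is non-slice, and indeed if $P(U)$ were slice then so would $P^*(U)$ be, making both trivial in $\cc$ and destroying the very non-concordance that Theorem~\ref{thm:MainTheoremA} establishes. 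Second, Theorem~\ref{thm:MainTheoremA} does not supply a knot $K_0$ with $P(K_0)\not\sim K_0$; it supplies $P(U)\not\sim P^*(U)$. So both hypotheses feeding your final contradiction are unsupported, and the argument as written does not close.

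The repair is short and is essentially what the paper does: if an invertible pattern $P$ acted by connected sum with some fixed $J$, then evaluating at $U$ gives $J\sim P(U)$, and applying $P$ to $P^{-1}(U)=\overline{P^*}(U)$ together with Theorem~\ref{Thm:invertible} gives
\[U\sim P(P^{-1}(U))\sim P(U)\#\overline{P^*}(U),\]
forcing $P(U)\sim P^*(U)$; the patterns $\tau_{2k-1}J$ of Theorem~\ref{thm:MainTheoremA} violate exactly this, so they are invertible but do not act by connected sum. (If you insist on a pattern with slice image of the unknot, you must first modify the pattern as in Remark~\ref{remark:puslice}, taking $Q=-P_\#\circ P$ and showing $Q(-P^*(U))\not\sim -P^*(U)$ via Theorem~\ref{Thm:invertible}; that is a different verification from the one you propose.)
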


Note that many of the examples of Theorem~\ref{thm:MainTheoremC} are clearly not automorphisms since they do not have $P(U)\sim U$.
However, as a corollary to Theorem \ref{thm:MainTheoremC} we observe that there exist invertible patterns $P$ with $P(U)$ slice and such that $P(K)$ is not always smoothly concordant to $K$ , see Remark~\ref{remark:puslice}. The actions of these patterns  on $\cc$ are interesting candidates for being group automorphisms of $\cc$ distinct from the identity automorphism. 
It is also perhaps an interesting question whether there exist invertible patterns $P$ which, despite having exteriors distinct from that of a connected sum pattern, still act by connected sum on the knot concordance group.

We say a pair of knots $K$ and $K'$ are $(n,m)$ \emph{0-shake concordant} if there is a smooth properly embedded planar surface $F$ in $S^3\times [0,1]$ with $n+m$ boundary components as follows. The boundary of $F$ has $n=2k+1$ components which are 0-framed copies of $K$ in $S^3\times\{0\}$, $k+1$ of which are parallel to $K$ and $k$ of which are anti-parallel,  and has $m=2j+1$ 0-framed copies of $K'$ in $S^3\times\{1\}$, with analogous orientation requirements. 
Examples of knots which are 0-shake concordant but not concordant first appeared in \cite{CR16}. The pairs we consider in Theorem \ref{thm:MainTheoremA} give new such examples as follows.
\begin{proposition}
\label{Prop:traceimpliesshakeconc}
If $X_0(K)$ is diffeomorphic to $X_0(K')$ then $K$ is (1,r) 0-shake concordant to $K'$ for some $r\in\N$. 
\end{proposition}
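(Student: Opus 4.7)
The plan is to construct the desired shake concordance by pushing the core disk of the $2$-handle of $X_0(K)$ through the diffeomorphism and putting the image into standard form with respect to the $2$-handle of $X_0(K')$. Let $\phi\colon X_0(K)\to X_0(K')$ be the given diffeomorphism and let $D_K\subset X_0(K)$ be the core of the $2$-handle: a smoothly embedded disk with boundary $K\subset \partial B^4\cong S^3$, where $B^4$ is the $0$-handle. Setting $D:=\phi(D_K)$ yields a smoothly embedded disk in $X_0(K')$ with $\partial D=\phi(K)\subset \partial \phi(B^4)\cong S^3$, and since $\phi|_{\partial B^4}$ is a diffeomorphism of $3$-spheres carrying $K$ to $\phi(K)$, the knot $\phi(K)$ has the same knot type as $K$ in its containing $S^3$.

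Next, using the standard fact that any two smoothly embedded $4$-balls in a connected $4$-manifold are ambient isotopic, I would apply an ambient isotopy of $X_0(K')$ to arrange $\phi(B^4)$ to sit inside the interior of the $0$-handle of $X_0(K')$. This gives an identification
\[
X_0(K')\setminus \operatorname{int}(\phi(B^4)) \;\cong\; (S^3\times [0,1])\cup h_{K'},
\]
with $S^3\times\{0\}=\partial\phi(B^4)$ and $h_{K'}$ attached along $K'\subset S^3\times\{1\}$ with $0$-framing. A further ambient isotopy within $S^3\times\{0\}$ moves $\phi(K)$ to coincide with $K$, producing a smoothly embedded disk $\widetilde D$ in $(S^3\times[0,1])\cup h_{K'}$ with $\partial\widetilde D=K\subset S^3\times\{0\}$.

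The main geometric step is to isotope $\widetilde D$ so that it meets the cocore $C_{K'}$ of $h_{K'}$ transversely and so that $\widetilde D\cap h_{K'}$ is a disjoint union of parallel copies of the core disk $D_{K'}$. Because $\phi$ sends the generator $[D_K]$ of $H_2(X_0(K),\partial B^4)\cong \Z$ to a generator of the corresponding relative group of $X_0(K')$ and preserves the intersection pairing, the algebraic intersection $\widetilde D\cdot C_{K'}$ equals $\pm 1$, so the geometric intersection consists of an odd number $r=2j+1$ of points. Straightening then yields $j+1$ copies of $D_{K'}$ parallel and $j$ anti-parallel to the core (perhaps after reconciling orientations by replacing $K'$ with its reverse, which does not change $X_0(K')$), and removing these $r$ disks leaves $\widetilde D\cap (S^3\times [0,1])$ as a smoothly properly embedded planar surface with a single boundary component $K$ in $S^3\times\{0\}$ and $r$ $0$-framed boundary components in $S^3\times\{1\}$ realizing the required parallel/anti-parallel distribution of copies of $K'$; this is precisely a $(1,r)$ $0$-shake concordance.

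The main technical obstacle I foresee is the standard but delicate straightening step for $\widetilde D\cap h_{K'}$: it must be carried out so that the $0$-framing of each resulting boundary copy of $K'$ is preserved and the signed geometric intersection count correctly matches the distribution of parallel and anti-parallel copies required by the definition of $(1,r)$ $0$-shake concordance.
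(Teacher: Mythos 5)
Your argument is correct and is essentially the paper's argument: both proofs push a disk generating the relevant relative second homology through the diffeomorphism and then cut along the cocore of the $2$-handle of $X_0(K')$, after arranging that the image disk meets that handle in parallel copies of its core. The only real difference is how the ``source'' $S^3$ is normalized: you invoke uniqueness of smoothly embedded $4$-balls (Cerf--Palais) to isotope $\phi(B^4)$ onto a standard small ball in the $0$-handle of $X_0(K')$, whereas the paper sidesteps any such appeal by deleting a small ball $\nu(x)$ about an interior point $x$ and its image $\nu(\phi(x))$, taking the disk bounded by the copy of $K$ on $\partial\nu(x)$ (an annulus out to $K$ capped off with the core of the $2$-handle), and using that $B^4$ minus any interior ball is $S^3\times[0,1]$. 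If you keep your normalization, two small points need attention: the ball-uniqueness theorem applies to balls embedded in the interior with compatible orientation, while $\phi(B^4)$ meets $\partial X_0(K')$ in a $3$-dimensional set (the image of $\partial B^4\smallsetminus\nu(K)$), so you should first shrink $B^4$ slightly or push $\phi(B^4)$ off the boundary by a collar, and you should assume, as the paper implicitly does, that $\phi$ preserves orientation so that the boundary circle really is a copy of $K$ rather than its mirror. Your homological count showing that the geometric intersection with the cocore is odd, with the parallel/anti-parallel distribution demanded by the definition, makes explicit something the paper leaves implicit; the remaining straightening of $\widetilde D\cap h_{K'}$ into parallel copies of the core is standard and is treated at the same level of brevity (``transverse to the cocore, cut open along the cocore'') in the paper.
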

\begin{proof}
Let $\phi: X_0(K) \to X_0(K')$ be the diffeomorphism.  
Let $x$ be a point in the interior of $X_0(K)$,  let $x'= \phi(x)$, and let
$X^b_0(K):= X_0(K)\smallsetminus\nu(\{x\})$ and $X^b_0(K'):= X_0(K')\smallsetminus\nu(\{x'\})$.
Then  $\phi$ restricts to a diffeomorphism of $X^b_0(K)$ with $ X^b_0(K')$. 
Let $\widetilde{K}$ be a copy of $K$ in the $S^3$ boundary component of $X^b_0(K)$. There is a smoothly embedded disc $D$ 
in $X^b_0(K)$ with boundary $\widetilde{K}$, which generates  $H_2(X^b_0(K), S^3)$. Therefore $\phi(\widetilde{K})$, a copy of $K$ in the $S^3$ boundary component of $X^b_0(K')$, bounds  $\phi(D)$, a smoothly embedded disk in $X^b_0(K')$ which generates $H_2(X^b_0(K'), S^3)$. 
We can assume without loss of generality that $\phi(D)$ is transverse to the cocore of the 2-handle of $X^b_0(K')$, and so by cutting open along the co-core we obtain a $(1,r)$ 0-shake concordance from $K$ to $K'$. 
\end{proof}

Let $g_4(K)$ denote the \emph{(smooth) 4-genus of} $K$, the minimal genus of any smoothly embedded surface that $K$ bounds in $B^4$. We observe that the annulus twisting construction as used in \cite{AJOT13}, \cite{ATag16}, and \cite{ATan16} produces pairs of knots with diffeomorphic 0-traces which each have 4-genus either 0 or 1.  By Corollary \ref{Cor:slicetrace}, any such pair of knots must have the same 4-genus.
In contrast, our techniques can be used to construct pairs of knots with $X_0(K)\cong X_0(K')$ such that $g_4(K)$ is arbitrarily large. Thus it remains possible that there is some such pair with $g_4(K)\neq g_4(K')$. Since 0-shake genus is by definition an invariant of the 0-trace, such an example would give a negative answer to the following question. 
\begin{question}[(Question 1.41a of \cite{Kir97})]
Must the 4-genus of a knot equal the 0-shake genus of the knot?
\end{question}


We also find compelling the topological analogue of Conjecture \ref{Conj:AKCrev}, given that all known topological concordance invariants are determined by the 0-surgery of a knot.  
\begin{conjecture}
If $K$ and $K'$ have $S^3_0(K) \cong S^3_0(K')$ then, up to reversing the orientation of either knot, $K$ and $K'$ are topologically  concordant.  
\end{conjecture}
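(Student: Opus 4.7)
The natural strategy is to exploit the additional flexibility of topological 4-manifold theory, following Freedman and Quinn, to convert the given homeomorphism of 0-surgeries into a topological concordance. Given $\phi\colon S^3_0(K)\to S^3_0(K')$, the first step is to form the closed 4-manifold
\[
V := X_0(K)\cup_\phi(-X_0(K')),
\]
obtained by gluing the two 0-traces along their common boundary via $\phi$. Since each 0-trace is simply connected and the gluing locus is connected, van Kampen's theorem gives $\pi_1(V)=1$; Freedman's classification then shows that $V$ is determined up to homeomorphism by its intersection form and its Kirby-Siebenmann invariant.

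Next, one would analyze the intersection form of $V$ using Mayer-Vietoris and the algebraic structure of $\phi$. Assuming that $\phi$ can be modified (by composing with a suitable self-homeomorphism of one of the 0-surgeries, or by reversing the orientation of $K'$, as permitted in the statement) so that it sends the class of a meridian to the class of a meridian, one expects the intersection form of $V$ to be hyperbolic. In that case $V$ is homeomorphic to either $S^2\times S^2$ or $\mathbb{CP}^2\#\overline{\mathbb{CP}^2}$, and so contains a pair of topologically locally flat $2$-spheres meeting transversely in a single point. Using Freedman's disk embedding theorem for simply connected $4$-manifolds, one would attempt to isotope this pair so that one of the spheres becomes compatible with the decomposition $V = X_0(K)\cup_\phi(-X_0(K'))$; such a compatibility would identify $X_0(K)$ topologically with $X_0(K')$. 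The topological analog of Proposition \ref{Prop:traceimpliesshakeconc} (combined with the topological version of Corollary \ref{Cor:slicetrace}) would then produce a topological $(1,r)$ 0-shake concordance between $K$ and $K'$, which a further delicate argument would be needed to improve to a genuine concordance.

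The main obstacle is precisely the compatibility step above: making the abstract $2$-spheres provided by Freedman's theorem align with the $2$-handle cores of the constituent $0$-traces. The homeomorphism $\phi$ can twist the handle-attaching data in ways that the abstract classification of $V$ does not see, and no general machinery exists to correct for this. Indeed, the smooth analog of this entire program is false by Theorem \ref{thm:MainTheoremA}, so any successful proof must invoke the disk embedding theorem in an essential way that cannot be mimicked smoothly. At present, no known topological concordance invariant distinguishes the two sides of the conjecture, but bridging that gap to produce a topologically locally flat annulus in $S^3\times[0,1]$ appears to require a substantially new idea in topological $4$-manifold theory.
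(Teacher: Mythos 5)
The statement you are addressing is not a theorem of the paper: it is stated as an open conjecture, offered without proof and motivated only by the observation that all known topological concordance invariants factor through the 0-surgery. So there is no paper proof to compare against, and your proposal does not supply one either --- by your own account the two decisive steps are missing. Concretely: (i) even granting that $V = X_0(K)\cup_\phi (-X_0(K'))$ is simply connected with hyperbolic intersection form (note that since the meridian generates $H_1(S^3_0(K))\cong\Z$, any homeomorphism $\phi$ automatically carries the meridian class to $\pm$ the meridian class, so no adjustment of $\phi$ is needed for that homological point), Freedman's classification only identifies $V$ abstractly; there is no mechanism for isotoping the resulting spheres to respect the given decomposition into the two traces, and this is exactly where the argument stops being an argument. (ii) Even if that step succeeded, the conclusion would be a homeomorphism $X_0(K)\cong X_0(K')$, which is strictly weaker than what the conjecture asks: the paper's Theorem~\ref{thm:MainTheoremA} shows that in the smooth category diffeomorphic 0-traces do not imply concordance, and topologically a homeomorphism of traces is only known to yield ``$K$ slice iff $K'$ slice'' (the topological version of Theorem~\ref{Thm:slicetrace} and Corollary~\ref{Cor:slicetrace}) together with a shake-concordance statement as in Proposition~\ref{Prop:traceimpliesshakeconc}; no technique is known for promoting a $(1,r)$ 0-shake concordance to a concordance, and your sketch defers precisely this to ``a further delicate argument.''

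In short, the proposal is a reasonable description of why one might hope the conjecture is true and of where the difficulty lies, but it contains no new idea that closes either gap. If you want to make progress, the honest formulation is that the conjecture remains open; a writeup should present your outline as a strategy and an obstruction analysis (perhaps noting that any successful proof must use the disk embedding theorem in a way with no smooth counterpart, since the smooth statement is false), not as a proof.
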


Finally, we note that given the Akbulut-Kirby conjecture and its many derivatives it is perhaps surprising that, as far as the authors know, the only known examples of knots with the same 0-surgery that are concordant are in fact slice. 

\begin{question}\label{basicexamples??}
Are there examples of distinct, non-slice knots $K$ and $K'$  such that $S^3_0(K) \cong S^3_0(K')$ and $K$ and $K'$ \emph{are} smoothly concordant?
\end{question}

This paper is organized as follows.  In Section 3 we introduce dualizable patterns and use them to construct pairs of knots with homeomorphic 0-surgeries. We also introduce the examples we will use to prove Theorem~\ref{thm:MainTheoremA}. In Section 4 we show that the pairs from Section 3 have diffeomorphic 0-traces and we prove Theorem~\ref{Thm:invertible}. In Section 5 we use the d-invariants of the double branched covers of the examples from Section 3 to prove Theorems \ref{thm:MainTheoremA} and \ref{thm:MainTheoremC}. In Section 6 we discuss a relationship between annulus twisting and dualizable patterns, and prove that there exist knots related by  annulus twisting which are not smoothly concordant. We conclude in Section 7 by exhibiting an infinite family of knots with interesting surgery properties. 

\section{Acknowledgements}
The first author would like to thank the Hausdorff Institute of Mathematics in Bonn, Germany for providing a stimulating environment, as well as the members of the Knot Concordance and 4-manifolds group there for many interesting and useful conversations. The authors also thank Josh Greene, Min Hoon Kim and Tye Lidman for particularly helpful discussions, and would like to acknowledge the authors of \cite{CHH13}, whose Appendix I we frequently consulted while working on this project. The authors thank their PhD advisors Cameron Gordon and John Luecke for a wealth of advice and encouragement and many insightful conversations. Finally, they thank the anonymous referee for their helpful comments and suggestions. 

\section{A construction of knots with diffeomorphic 0-surgeries}

\subsection{Definitions and notation}\label{subsection:definitions}

Unless specifically mentioned, all maps and concordances are smooth and all knots and manifolds are both smooth and oriented.  We use $\cong$ to denote diffeomorphism of manifolds, $\simeq$ to denote ambient isotopy of knots and links in 3-manifolds, and $\sim$ to denote concordance of knots. All (co)homology groups are by default taken with integer coefficients. For $N$ a properly embedded smooth submanifold of $M$ we use $\nu(N)$ to denote a tubular neighborhood of $N$.


For $K$ a knot in $S^3$ and $p/q$ in the extended rationals we define $S^3_{p/q}(K)$ to be the 3-manifold obtained by $p/q$ Dehn surgery on $S^3$ along $K$. For $n \in \N$ we use $\Sigma_n(K)$ to denote the $n$-fold cyclic cover of $S^3$ branched over $K$ .

Let $P:S^1\to V$ be an oriented knot in a standard solid torus $V:=S^1 \times D^2$.  Assume that $P$ is not null-homologous. By the usual abuse of notation, we use $P$ to refer to both this map and its image.  Define $\lambda_V=S^1\times \{x_0\}$ for some $x_0\in\partial D^2$, oriented so that $P$ is homologous to $n \lambda_V$ for some positive $n$. We call $n$ the \emph{(algebraic) winding number of} $P$. Define $\mu_P$ to be a  meridian for $P$, oriented such that the linking number of $P$ and $\mu_P$ is 1, and define $\mu_V=\{x_1\}\times \partial D^2$ for some $x_1\in S^1$, oriented so that $\mu_V$ is homologous to a positive multiple of $\mu_P$. Finally define the longitude $\lambda_P$ of $P$ to be the unique framing curve of $P$ in $V$ which is homologous to a positive multiple of $\lambda_V$ in $V\smallsetminus \nu(P)$.  Define the \emph{geometric winding number} of $P$ to be the minimal number of intersections of $P$ with the meridional disk for $V$ over all patterns in the isotopy class of $P$. 

 For any knot $K$ in $S^3$ there is a canonical embedding $i_K:V \to S^3$ given by identifying $V$ with $\overline{\nu(K)}$ such that $\lambda_V$ is sent to the null-homologous curve on $\partial\overline{\nu(K)}$. Then $i_K \circ P:S^1\to S^3$ specifies the oriented knot $P(K)$ in $S^3$, the \emph{satellite of $K$ by $P$}. 

Let $\tau_n:S^1\times D^2\to S^1\times D^2$ be the $n$-fold Dehn twist about a positive meridian of $S^1 \times D^2$. Then for a pattern $P$ we define the \emph{$n$-twist} of $P$ to be  $\tau_n(P):= \tau_n \circ P$.

\begin{definition}\label{Defn:dualpattern}
A pattern $P$ is \emph{dualizable} if there exists $P^*:S^1\to S^1\times D^2=:V^*$
such that there is an orientation preserving homeomorphism $f: V\smallsetminus \nu(P)\to V^*\smallsetminus \nu(P^*)$ with $f(\lambda_V)=\lambda_{P^*}, f(\lambda_P)=\lambda_{V^*}, f(\mu_V)=-\mu_{P^*}$, and $f(\mu_{P})=-\mu_{V^*}$. We call $P^*$ the \emph{dual} of $P$. 
\label{Def:dualizable}
\end{definition}

\begin{remark}
\label{rmk:dualconditions}
There is some redundancy in this definition: if one assumes $f(\lambda_P)=\lambda_{V^*}$ and $f(\mu_V)=-\mu_{P^*}$ it is not hard to show that one can isotope $f$ in a small neighborhood of the boundary of $V \smallsetminus \nu(P)$ so that $f(\lambda_V)=\lambda_{P^*}$ and $f(\mu_P)=- \mu_{V^*}$. Thus when checking that a pattern is dualizable or computing a dual it suffices to check that $f(\lambda_P)=\lambda_{V^*}$ and $f(\mu_V)=-\mu_{P^*}$.  We include all four conditions in the definition since we will often appeal to them all when discussing properties of dualizable patterns. 
\end{remark}
We also have the following idea of mirror-reversal for patterns. 
\begin{definition}\label{defn:bar}
Given a pattern $P: S^1 \to V$, define $\overline{P}$ to be the pattern obtained from $P$ by reversing the orientation of both $S^1$ and $V$; note that
$\overline{P}$ has diagram obtained from a diagram of $P$ by changing all crossings and the orientation of $P$. 
\end{definition}
Note that for $P$ dualizable, $\left( \overline{P} \right)^*= \overline{P^*}$. 
We warn the reader that our conventions differ from those in \cite{GM95}, in whose notation our $\overline{P^*}$  is the dual of $P$.

\subsection{Dualizable patterns}
We now describe a method of producing a large class of dualizable patterns and their duals by considering the natural inclusion of $S^1 \times D^2$ into $S^1 \times S^2$ as follows.

\begin{definition}
Define $\Gamma:S^1\times D^2 \to S^1\times S^2$ by $\Gamma(t,d)=(t, \gamma(d))$, where   $\gamma:D^2\to S^2$ is an arbitrary orientation preserving embedding.
 Then for any curve $\alpha: S^1 \to S^1 \times D^2$, we can define a knot in $S^1\times S^2$ by
$\widehat{\alpha} :=\Gamma \circ \alpha:S^1\to S^1\times S^2.$
 \end{definition}

 \begin{proposition}\label{Prop:examples of dualizable patterns}
 A pattern $P$ in a solid torus $V$ is dualizable if and only if $\widehat{P}$ is isotopic to $\widehat{\lambda_V}\}$ in $S^1 \times S^2$. 
 \end{proposition}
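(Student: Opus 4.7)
The plan is to reinterpret both dualizability of $P$ and isotopy of $\widehat{P}$ to $\widehat{\lambda_V}$ in terms of the genus-one Heegaard structure of $Y := S^1 \times S^2$, then invoke Waldhausen's uniqueness theorem for such splittings. Write $Y = V \cup_{\partial V} V'$ for the standard splitting, where $V' \cong S^1 \times D^2$ is glued to $V$ identifying $\mu_V \leftrightarrow \mu_{V'}$ and $\lambda_V \leftrightarrow \lambda_{V'}$. Since $\widehat{P} \subset V$, the exterior decomposes as $Y \setminus \nu(\widehat{P}) = (V \setminus \nu(P)) \cup_{\partial V} V'$.

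For the forward direction, I would transport the first factor through the dualizing diffeomorphism $f$. Because $f$ sends $\partial V$ to $\partial \nu(P^*)$ with $\mu_V \mapsto -\mu_{P^*}$, the solid torus $V'$ (whose meridian is $\mu_{V'} = \mu_V$) is re-glued to $V^* \setminus \nu(P^*)$ along $\partial \nu(P^*)$ with its meridian landing on $\mu_{P^*}$. This is exactly the gluing that fills $\nu(P^*)$ back in to recover $V^*$, so $Y \setminus \nu(\widehat{P}) \cong V^*$ is a solid torus. Hence $Y = \nu(\widehat{P}) \cup (Y \setminus \nu(\widehat{P}))$ is a genus-one Heegaard splitting of $S^1 \times S^2$. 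By Waldhausen's theorem that every genus-one Heegaard splitting of $S^1 \times S^2$ is isotopic to the standard one, $\widehat{P}$ is isotopic to the core $\widehat{\lambda_V}$.

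For the reverse direction, an isotopy taking $\widehat{P}$ to $\widehat{\lambda_V}$ yields an ambient diffeomorphism $\Phi : Y \to Y$ with $\Phi(\widehat{P}) = \widehat{\lambda_V}$ and, after a small adjustment near the cores, $\Phi(\nu(\widehat{P})) = V$. Then $\Phi$ restricts to a diffeomorphism of $(V \setminus \nu(P)) \cup V'$ onto $V'$. I would set $V^* := V'$ (viewed as an abstract solid torus), take $W := \Phi(V') \subset V'$ to be the image of the complementary solid torus, let $P^* \subset V^*$ be the core of $W$ with a suitable orientation, and define $f := \Phi|_{V \setminus \nu(P)}$, so that $f : V \setminus \nu(P) \to V' \setminus W = V^* \setminus \nu(P^*)$ is a diffeomorphism. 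By Remark~\ref{rmk:dualconditions} one need only verify $f(\mu_V) = -\mu_{P^*}$ and $f(\lambda_P) = \lambda_{V^*}$: the first follows because $\mu_V$ bounds a disk in $V'$ whose image under $\Phi$ lies in $W = \nu(P^*)$, forcing $\Phi(\mu_V) = \pm \mu_{P^*}$; the second follows by similarly tracking, under $\Phi$, which curves on $\partial \nu(P)$ bound specific surfaces inside $V$ on the other side of the Heegaard torus.

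The chief obstacle I anticipate is the orientation bookkeeping when verifying the meridian and longitude identifications (especially for $\lambda_P$, where winding number one of $P$ must be used), together with citing a precise enough form of Waldhausen's theorem guaranteeing ambient isotopy, rather than merely orientation-preserving diffeomorphism, of genus-one Heegaard splittings of $S^1 \times S^2$.
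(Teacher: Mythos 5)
Your overall architecture is the same as the paper's: for dualizable $\Rightarrow$ isotopic you show the exterior of $\widehat{P}$ is a solid torus by transporting the filling solid torus through $f$ (the paper phrases this as ``Dehn filling $V\smallsetminus\nu(P)$ along $\mu_V$ equals Dehn filling $V^*\smallsetminus\nu(P^*)$ along $-\mu_{P^*}$'') and then invoke the isotopy refinement of Waldhausen; for isotopic $\Rightarrow$ dualizable you build the dual out of the complementary solid torus and check only two of the four conditions via Remark~\ref{rmk:dualconditions}, just as the paper does with $V^*=(S^1\times S^2)\smallsetminus\nu(\widehat{P})$ and $Q=\widehat{\lambda_V}$. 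Two points in your write-up need repair, though. First, in the forward direction Waldhausen only gives that $\widehat{P}$ is isotopic to $\pm\widehat{\lambda_V}$ as an oriented knot; to land on $+\widehat{\lambda_V}$ you must use that $\widehat{P}$ is homologous to a \emph{positive} multiple of $\widehat{\lambda_V}$ (the orientation convention on $\lambda_V$), which is exactly the closing sentence of the paper's argument and is absent from yours.

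Second, and more substantively, your plan for the condition $f(\lambda_P)=\lambda_{V^*}$ --- ``tracking which curves on $\partial\nu(P)$ bound specific surfaces on the other side of the Heegaard torus'' --- would not work: unlike the meridian, a longitude of a solid torus is not characterized by bounding anything, and the curve $\Phi(\widehat{\lambda_P})\subset\partial V'$ is genuinely not determined up to isotopy in $\partial V'$ by the hypotheses. Indeed, composing $\Phi$ with the twist map $\Delta$ (which preserves the splitting and the core) changes $\Phi(\widehat{\lambda_P})$ by Dehn twists along $\mu_{V'}$, so if you insist on the product structure $V'$ inherits from $S^1\times S^2$, the equality with the standard longitude can simply fail. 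The correct move --- and the one the paper makes --- is that, since $V^*$ is only an abstract solid torus, you get to \emph{choose} its identification with $S^1\times D^2$: declare $\lambda_{V^*}:=f(\lambda_P)$, which is legitimate because $f(\lambda_P)$ is a curve on $\partial V^*$ meeting the meridian of $V^*$ once (it is a parallel push-off of the core $\Phi(\widehat{P})$), and then only the meridian condition $f(\mu_V)=-\mu_{P^*}$ remains to be verified, which your disk argument (with the orientation of $P^*$ chosen to fix the sign) does handle. With those two adjustments your proof matches the paper's.
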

\begin{proof}
For the `if' direction, let $V^*= (S^1 \times S^2) \smallsetminus \nu(\widehat{P})$. 
Since $\widehat{P}$ is isotopic to $\widehat{\lambda_V}\simeq S^1 \times \Gamma(\{x_0\})$, where $x_0\in\partial D^2$, we know $V^*$ is a solid torus. Thus there is an identification of $V^*$ with $S^1 \times D^2$ such that $\widehat{\lambda_P}$ is identified with $S^1\times \{pt\}=:\lambda_{V^*}$. 
 Let $Q= \widehat{\lambda_V} \subseteq V^*$. 
Let $Z= (S^1 \times S^2) \smallsetminus \nu(\widehat{P} \sqcup Q)$, and observe that $V \smallsetminus \nu(P) = Z = V^* \smallsetminus \nu(Q)$. Tracing through these identifications, we see that  $\mu_V \leftrightarrow -\mu_Q$ and $\la_P \leftrightarrow \la_{V^*}$ and so by Remark~\ref{rmk:dualconditions} $P$ is dualizable with  $P^*=Q$. 

For the `only if' direction, observe that $S^1\times S^2\smallsetminus\nu(\widehat{P})$ is diffeomorphic to the result of Dehn filling $V\smallsetminus\nu(P)$  along $\mu_V$. Since P is dualizable, this is diffeomorphic to the result of Dehn filling $V^*\smallsetminus\nu(P^*)$ along $-\mu_{P^*}$, which is the solid torus $V^*$. So $\widehat{P}$ is a knot in $S^1\times S^2$ with solid torus complement. By Waldhausen\footnote{In fact, Waldhausen only proves uniqueness up to diffeomorphism. To prove uniqueness up to isotopy requires more work, which is done explicitly in, for example, Carvalho and Oertel \cite{CO05}. For discussion of the history, see \cite{MSZ16} and \cite{Sch07}.} \cite{Wal68} all genus one Heegaard splittings of $S^1\times S^2$ are isotopic, so  $\widehat{P}$ is isotopic to $\pm\widehat{\lambda_V}$.  Since by definition $\widehat{P}$ is homologous to a positive multiple of $\lambda_V$, $\widehat{P}$ must be isotopic to $+\widehat{\lambda_V}$.
\end{proof}
Note that Proposition~\ref{Prop:examples of dualizable patterns}  implies that every geometric winding number one pattern $P$ is dualizable with $P^*= P$. It is also now straightforward to see that all dualizable patterns have (algebraic) winding number $1$, and as a corollary of Proposition~\ref{Prop:examples of dualizable patterns}  we obtain the following fundamental group characterization of dualizability.  (See Theorem~3.4 of \cite{DR16} for a similar result.)
\begin{corollary}\label{cor:pi1condition}
A winding number 1 pattern $P$ in a solid torus $V$ is dualizable if and only if $\mu_P \in \langle \langle \mu_V \rangle \rangle$, the subgroup of $\pi_1(V \smallsetminus P)$ normally generated by $\mu_V$. 
\end{corollary}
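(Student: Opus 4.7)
The plan is to use Proposition~\ref{Prop:examples of dualizable patterns} to restate dualizability in terms of the topology of the exterior $W:=(S^1\times S^2)\smallsetminus\nu(\widehat{P})$: the proof of that proposition shows that $P$ is dualizable if and only if $W$ is a solid torus, since Waldhausen's theorem together with the positive winding number hypothesis pins down $\widehat{P}$ up to isotopy. Meanwhile $W$ is obtained from $V\smallsetminus\nu(P)$ by gluing in the complementary solid torus of $V$ in $S^1\times S^2$, whose meridian is $\mu_V$, so Seifert--van Kampen gives
\[
\pi_1(W)\;\cong\;\pi_1(V\smallsetminus\nu(P))\big/\langle\langle\mu_V\rangle\rangle.
\]
The hypothesis $\mu_P\in\langle\langle\mu_V\rangle\rangle$ thus translates precisely to $\mu_P$ being null-homotopic in $W$, and the corollary reduces to showing that $W$ is a solid torus if and only if $\mu_P$ is null-homotopic in $W$.

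If $W$ is a solid torus then $\mu_P$ is the meridian of $W$, so $\mu_P$ bounds a compressing disk and is trivial in $\pi_1(W)$. For the converse, assume $\mu_P$ is null-homotopic in $W$. By the loop theorem, $\mu_P$ bounds an embedded disk $D\subset W$, and since $\mu_P$ is non-separating on the torus $\partial W$, cutting $W$ along $D$ produces a connected 3-manifold $W'$ with $\partial W'=S^2$. Moreover $W$ is recovered from $W'$ by a 3-dimensional 1-handle attachment, so a further application of Seifert--van Kampen gives $\pi_1(W)\cong\pi_1(W')*\Z$. On the other hand, gluing $\nu(\widehat{P})$ back in yields $\pi_1(S^1\times S^2)\cong\pi_1(W)/\langle\langle\mu_P\rangle\rangle$, and combined with $\mu_P=1$ in $\pi_1(W)$ this forces $\pi_1(W)\cong\Z$. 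Hence $\pi_1(W')*\Z\cong\Z$, and since a nontrivial free product is never abelian we conclude $\pi_1(W')=1$.

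Then $W'$ is a compact simply-connected 3-manifold with sphere boundary, so capping off and invoking the Poincar{\'e} conjecture gives $W'\cong B^3$; it follows that $W$ itself is a solid torus, completing the proof. The step I expect to need the most care is the identification of the cut along $D$ as a 1-handle attachment yielding the free-product decomposition of $\pi_1(W)$, since it is precisely this structure that permits the purely group-theoretic deduction of $\pi_1(W')=1$ from $\pi_1(W)\cong\Z$.
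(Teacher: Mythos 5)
Your argument is correct, and it shares the paper's overall skeleton while replacing the key recognition step with a different one. Like the paper, you reduce everything to the exterior $W=(S^1\times S^2)\smallsetminus\nu(\widehat{P})$: dualizability is equivalent, via Proposition~\ref{Prop:examples of dualizable patterns} together with the Waldhausen uniqueness of genus-one splittings and the positive winding number, to $W$ being a solid torus, and the Dehn-filling presentation $\pi_1(W)\cong\pi_1(V\smallsetminus\nu(P))/\langle\langle\mu_V\rangle\rangle$ translates the hypothesis into $\mu_P$ being null-homotopic in $W$; the paper's proof makes exactly this identification. Where you differ is in showing $W$ is a solid torus: the paper observes that $\langle\langle\mu_P\rangle\rangle\subseteq\langle\langle\mu_V\rangle\rangle$ makes $\pi_1(W)$ a quotient of $\pi_1(V\smallsetminus\nu(P))/\langle\langle\mu_P\rangle\rangle\cong\pi_1(V)\cong\Z$, uses winding number one to get $H_1(W)\cong\Z$ and hence $\pi_1(W)\cong\Z$, and then cites Hempel for solid-torus recognition; you instead re-prove the needed case by hand, via Dehn's lemma (what you call the loop theorem is really Dehn's lemma: an embedded curve in $\partial W$ that is null-homotopic in $W$ bounds an embedded disk), cutting along the disk, the free-product decomposition $\pi_1(W)\cong\pi_1(W')\ast\Z$, and the Poincar{\'e} conjecture to identify $W'\cong B^3$. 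Your route is more self-contained but heavier, essentially redoing the relevant piece of the classification of compact orientable $3$-manifolds with infinite cyclic fundamental group that the paper simply quotes, and it has the mild advantage of not needing the homology computation, since the null-homotopy of $\mu_P$ alone forces $\pi_1(W)\cong\Z$ through the filling relation; your intermediate claims (the disk is non-separating because its boundary is non-separating in $\partial W$, so $W'$ is connected with sphere boundary, and a nontrivial free product is never $\Z$) all check out. For the converse the paper is shorter: it never returns to $S^1\times S^2$, but notes that $\mu_{V^*}$ dies in $\pi_1(V^*)\cong\pi_1(V^*\smallsetminus\nu(P^*))/\langle\langle\mu_{P^*}\rangle\rangle$ and transports this through the defining homeomorphism, whereas your claim that $\mu_P$ is the meridian of the solid torus $W$ deserves the one-line justification that a genus-one splitting of $S^1\times S^2$ must glue meridian to meridian because $H_1(S^1\times S^2)$ is infinite.
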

\begin{proof}
First, suppose that $\mu_P \in \langle \langle \mu_V \rangle \rangle$ to show that $\widehat{P}$ is isotopic to $\widehat{\la_V}$ in $S^1 \times S^2$. Let $X=(S^1 \times S^2) \smallsetminus \nu (\widehat{P})$.  Note that
$\pi_1(X)= \pi_1(V \smallsetminus P)/ \langle \langle \mu_V \rangle \rangle$ is a quotient of  $\pi_1(V \smallsetminus \nu(P))/ \langle \langle \mu_P \rangle \rangle = \pi_1(V)= \Z$. Since $P$ is winding number one, we have that $P$ is homologous to $\la_V$, and hence that $\widehat{P}$ is homologous to $\widehat{\la_V}$. It follows that $H_1(X) \cong H_1((S^1 \times S^2) - \nu (\widehat{\la_V})) \cong \Z$, and so we must have $\pi_1(X) \cong \Z$ as well. Since $X$ has no $S^2$ boundary components, it follows from \cite{Hem04} that $X$ is homeomorphic to a solid torus. Now the extension of \cite{Wal68} discussed in the proof of Proposition~\ref{Prop:examples of dualizable patterns} shows that $\widehat{P}$ must be isotopic to $\widehat{\la_V}$ in $S^1 \times S^2$, and hence that $P$ is dualizable. 

Now suppose that $P$ is dualizable. It suffices to show that $\mu_{V^*}\in\langle \langle \mu_{P^*} \rangle \rangle$.  However, $\pi_1(V^* \smallsetminus \nu(P^*)) / \langle \langle \mu_{P^*} \rangle \rangle \cong \pi_1(V^*).$
But certainly $[\mu_{V^*}]=0$ in $\pi_1(V^*)$,  and so we have the desired result. 
\end{proof} 

The following example, originally due to \cite{Bra80} and discussed in \cite{GM95} illustrates a method for finding the dual of a dualizable pattern (see also \cite{DR16}).

\begin{example}\label{Exl:Gompf Miyazaki example}
The left of Figure \ref{Fig:examplepattern} shows a pattern $J$ in a solid torus $V$. We will show that $J$ is dualizable with dual $J^* \simeq \tau_{-4}(J)$. Note that $\lambda_V$ is the curve on $\partial V$ which bounds a disk in the diagram. We show $\widehat{J}\subset S^1\times S^2$ (center), where we draw $S^1\times S^2$ as $I\times S^2$ and imagine identifying the points $\partial I\times \{p\}$ for all $p\in S^2$. By isotoping the strand of $\widehat{J}$ which runs `under' for three consecutive crossings around the $S^2$ factor one obtains a diagram of $\widehat{J}$ with those three crossings changed (right). From here it is straightforward to observe that $\widehat{J}$ is isotopic in $S^1\times S^2$ to $S^1\times \{p_0\}$ and so $J$ is dualizable.  We now show that $J^* \simeq \tau_{-4}(J)$. 
\begin{figure}[h]
\includegraphics[width=11cm]{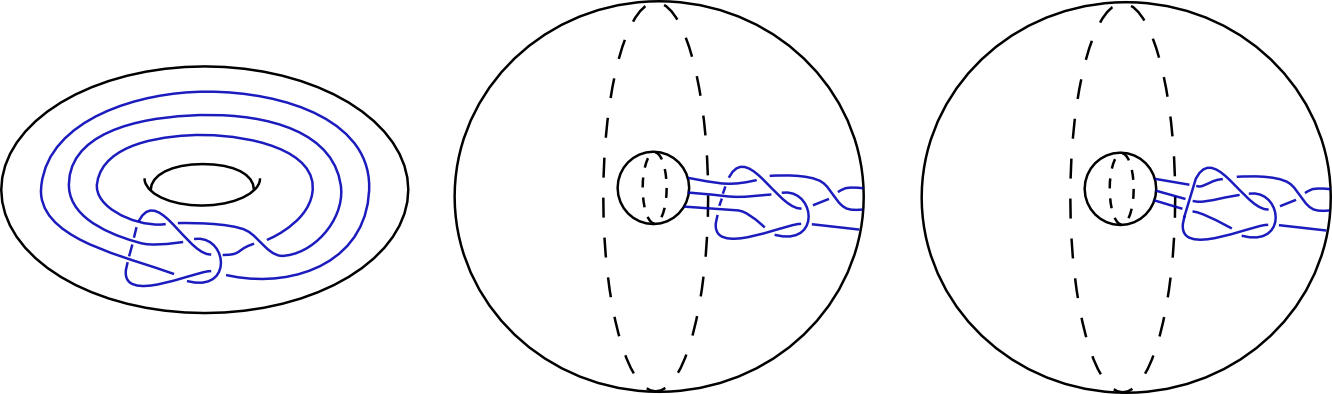}
\caption{On the left, a pattern $J$ in $V$ the standard solid torus in $S^3$. The right two diagrams show that $\widehat{J}$ is isotopic to $S^1\times \{p_o\}$ in $S^1 \times S^2$ }
\label{Fig:examplepattern}
\end{figure}
The top left diagram of Figure \ref{Fig:dualcalculation} shows the link $L:=\widehat{J}\sqcup \widehat{\lambda_V} \subset S^1\times S^2$, where the link is comprised of the red and blue curves. To keep the diagram uncluttered we do not explicitly depict the `outer' $S^2$ in our depiction of $S^1\times S^2$ as a quotient of $I\times S^2$. We also keep track of $\widehat{\lambda_J}$ in green, so we can verify that the conditions of Definition~\ref{Defn:dualpattern} are satisfied.  We isotope as in Figure~\ref{Fig:dualcalculation} so that $\widehat{J}$ goes to $S^1\times \{p_0\}$ as in the bottom left diagram, which we denote $L'$. 
\begin{figure}[h]
\includegraphics[width=13cm]{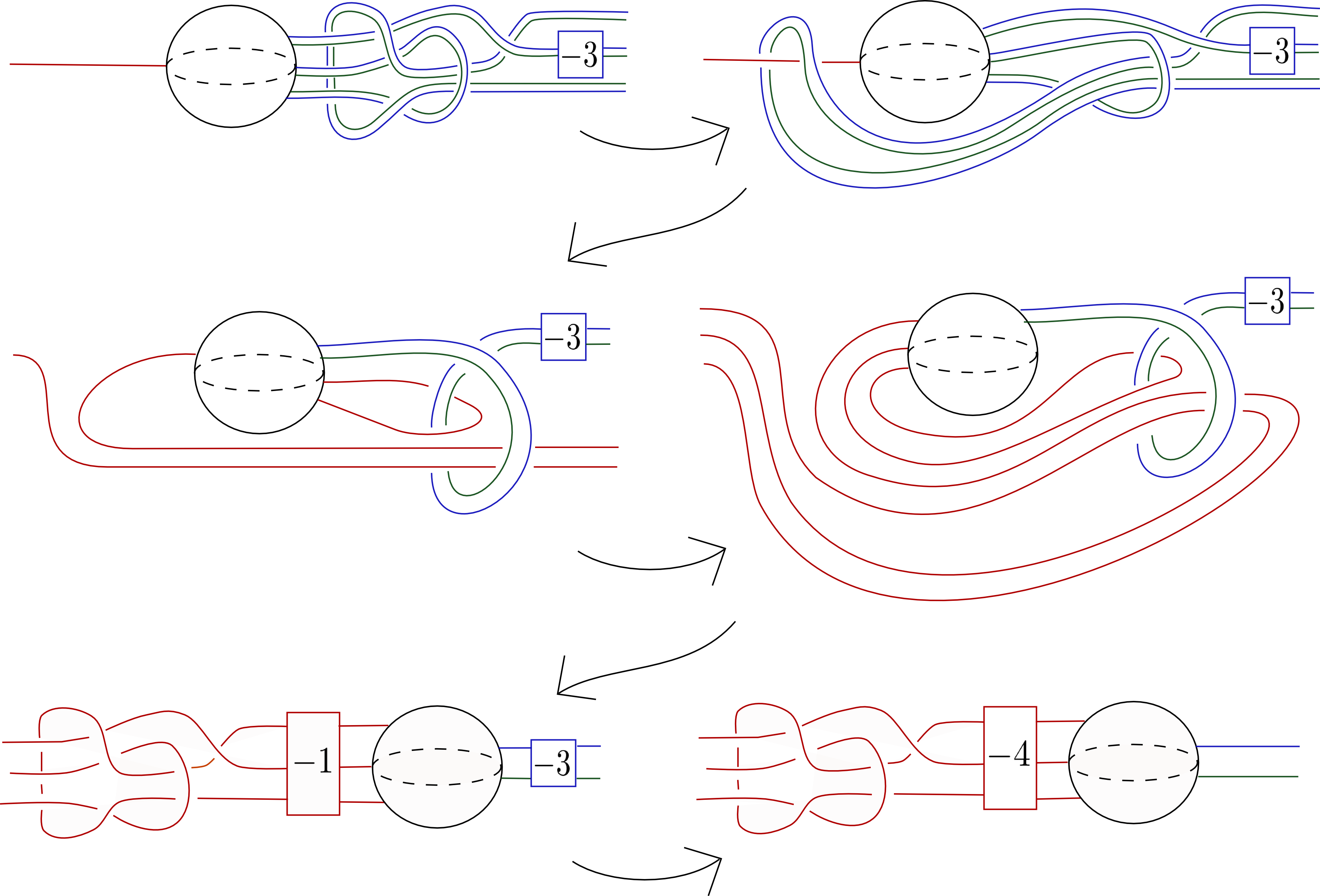}
\caption{Finding $J^* \simeq \tau_{-4}(J)$ by working in $S^1 \times S^2$. }
\label{Fig:dualcalculation}
\end{figure}

Now let $\delta_t:S^2\to S^2$ be the map which rotates $S^2$ by $2\pi t$ about an axis through the east and west poles, and define $\Delta:S^1\times S^2\to S^1\times S^2$ by $\Delta(t,d)=(t,\delta_t(d))$ for all $t\in S^1$, $d\in S^2$. Observe that $\Delta^3$ is a self-homeomorphism of $S^1 \times S^2$ which takes $L'$ to the link $L''= (S^1 \times \{x_0\}) \sqcup \eta$  shown in the bottom right of Figure \ref{Fig:dualcalculation}. 

The composition of $\Delta^3$ with the homeomorphism induced by the isotopy of $L$ to $L'$ gives a self-homeomorphism of $S^1\times S^2$ which takes $L$ to $L''$, and hence a homeomorphism $f:(S^1\times S^2)\smallsetminus \nu(L)\to (S^1\times S^2)\smallsetminus \nu(L'')$. Note that $(S^1\times S^2)\smallsetminus \nu(L)= V\smallsetminus \nu(J)$. Identify $(S^1 \times S^2) \smallsetminus (S^1 \times \{x_0\})$ as a solid torus $V^*$, where the identification is made so that $f(\lambda_J)= \lambda_{V^*}$. Finally, we can  identify $(S^1\times S^2)\smallsetminus \nu(L'')$ as $V^*\smallsetminus \nu(J^*)$ for some $J^* \hookrightarrow V^*$ so that $f$ gives a homeomorphism from $V \smallsetminus \nu(J)$ to $V^* \smallsetminus \nu(J^*)$ with the hypotheses of Definition \ref{Defn:dualpattern}  satisfied. 
\label{Ex:J}
\end{example}

\begin{theorem}[(\cite{Bra80})]
If $P$ is a dualizable pattern with dual $P^*$, then there is a homeomorphism $\phi:S^3_0(P(U))\to S^3_0(P^*(U))$. 
\label{Thm:diffeosurgeries}
\end{theorem}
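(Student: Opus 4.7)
The plan is a direct cut-and-paste argument that tracks the two solid tori making up $S^3 \smallsetminus \nu(P(U))$ through the dualizability homeomorphism. Decompose $S^3 = V \cup_{T^2} W$ into two standardly embedded solid tori, with $V$ identified with $\overline{\nu(U)}$ via $i_U$ and $W$ its complementary solid torus. Since dualizable patterns have winding number one, the curve $\lambda_P$ is null-homologous in $S^3 \smallsetminus \nu(P(U))$ and therefore coincides with the Seifert framing of $P(U)$, so
\[
S^3_0(P(U)) \;=\; \bigl(V \smallsetminus \nu(P)\bigr) \;\cup_{\partial V}\; W \;\cup_{\partial \nu(P)}\; T,
\]
where $T$ is the surgery solid torus, glued so that its meridian disk has boundary $\lambda_P$. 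First I would transport this decomposition through the dualizability homeomorphism $f \colon V \smallsetminus \nu(P) \to V^* \smallsetminus \nu(P^*)$.

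The crucial feature of $f$ is that it exchanges the two torus boundary components, carrying $\partial V$ to $\partial \nu(P^*)$ and $\partial \nu(P)$ to $\partial V^*$. I would then use the four identities of Definition~\ref{Def:dualizable} to compute the filling slopes on $\partial(V^* \smallsetminus \nu(P^*))$ induced by the images of $W$ and $T$. Since $V$ is standardly embedded in $S^3$, the meridian of $W$ satisfies $\mu_W = \lambda_V$ on $\partial V$; hence $f$ carries $\mu_W$ to $\lambda_{P^*}$, and attaching $W$ to $\partial \nu(P^*)$ realizes Dehn filling along $\lambda_{P^*}$, that is, $0$-surgery on $P^*$ inside $V^*$. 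Dually, $T$ was filling $\partial \nu(P)$ along $\lambda_P$, and $f(\lambda_P) = \lambda_{V^*}$, so $T$ now Dehn-fills $\partial V^*$ along $\lambda_{V^*}$.

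To conclude, I would compare with the analogous decomposition $S^3 = V^* \cup W^*$ for $S^3_0(P^*(U))$. This expresses $S^3_0(P^*(U))$ as $V^* \smallsetminus \nu(P^*)$ with $\partial V^*$ Dehn-filled along $\lambda_{V^*}$ (by $W^*$, since $\mu_{W^*} = \lambda_{V^*}$) and $\partial \nu(P^*)$ Dehn-filled along $\lambda_{P^*}$ (by its surgery solid torus). Both manifolds are thus Dehn fillings of $V^* \smallsetminus \nu(P^*)$ with the same filling slope on each boundary component, and so are homeomorphic; the induced identification is the desired $\phi$.

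The only real obstacle is bookkeeping: carefully tracking the slopes across the boundary swap carried out by $f$ using each of the four identities of Definition~\ref{Def:dualizable}. The sign reversals appearing in $f(\mu_V) = -\mu_{P^*}$ and $f(\mu_P) = -\mu_{V^*}$ cause no trouble, since Dehn fillings depend on the filling curve only up to orientation, and the statement asserts only a homeomorphism.
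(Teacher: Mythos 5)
Your proposal is correct and is essentially the paper's own proof: the paper also realizes $S^3_0(P(U))$ as the Dehn filling of $V \smallsetminus \nu(P)$ along $\lambda_P$ and $\lambda_V$, transports these slopes through the dualizability homeomorphism to get the filling of $V^* \smallsetminus \nu(P^*)$ along $\lambda_{V^*}$ and $\lambda_{P^*}$, and identifies the result with $S^3_0(P^*(U))$; your write-up just makes the bookkeeping (the boundary swap and the identification $\mu_W = \lambda_V$) explicit.
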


\begin{proof}
Construct $S^3_0(P(U))$ by Dehn filling $V\smallsetminus\nu(P)$ along $\lambda_P$ and $\lambda_V$. Since $P$ is dualizable, this is diffeomorphic to $V^*\smallsetminus\nu( P^*)$ Dehn filled along $\lambda_{V^*}$ and $\lambda_{P^*}$, which is $S^3_0(P^*(U))$. 
\end{proof}

\begin{proposition}\label{Prop:twist}
If $P$ is dualizable then so is $\tau_n(P)= \tau_n \circ P$, with dual pattern $(\tau_nP)^*=\tau_{-n}(P^*)$. 
\end{proposition}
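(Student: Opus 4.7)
The plan is to construct the dualizing homeomorphism for $\tau_n(P)$ by conjugating the one for $P$ with the given twists. Let $g : V \setminus \nu(P) \to V^* \setminus \nu(P^*)$ be the homeomorphism witnessing that $P$ is dualizable. Since $\tau_n$ restricts to a homeomorphism $V \setminus \nu(P) \to V \setminus \nu(\tau_n(P))$ and $\tau_{-n}$ restricts to a homeomorphism $V^* \setminus \nu(P^*) \to V^* \setminus \nu(\tau_{-n}(P^*))$, I would set
\[
h := \tau_{-n} \circ g \circ \tau_n^{-1} : V \setminus \nu(\tau_n(P)) \to V^* \setminus \nu(\tau_{-n}(P^*)),
\]
and verify the four conditions of Definition~\ref{Defn:dualpattern} for the pair $\bigl(\tau_n(P), \tau_{-n}(P^*)\bigr)$.

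The main computation is to record how $\tau_{\pm n}$ acts on the peripheral curves. A meridional Dehn twist fixes meridians and shifts longitudes by the twist number, so on $\partial V$ one has $\tau_n(\mu_V) = \mu_V$ and $\tau_n(\lambda_V) = \lambda_V + n\mu_V$. Because dualizable patterns have winding number $1$ (by the discussion following Proposition~\ref{Prop:examples of dualizable patterns}), the identity $[\mu_V] = [\mu_P]$ in $H_1(V \setminus \nu(P))$ forces the analogous effect on the pattern torus: $\tau_n(\mu_P) = \mu_{\tau_n(P)}$ and $\tau_n(\lambda_P) = \lambda_{\tau_n(P)} + n\mu_{\tau_n(P)}$. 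Exactly parallel formulae hold for $\tau_{-n}$ acting on $V^*$ and on $\partial \nu(P^*)$, with the sign of $n$ reversed.

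Substituting these formulae into $h$ and applying the four defining relations of $g$, each of the four conditions becomes a short cancellation. For instance, one traces $\lambda_V \mapsto \lambda_V - n\mu_V \mapsto \lambda_{P^*} + n\mu_{P^*} \mapsto \lambda_{\tau_{-n}(P^*)}$, yielding $h(\lambda_V) = \lambda_{\tau_{-n}(P^*)}$; the other three computations proceed identically, and the $\pm n$ framing shifts cancel cleanly only because of the winding number $1$ identity above. The main potential pitfall is sign and orientation bookkeeping under the conventions of Definition~\ref{Defn:dualpattern}; to keep this manageable I would invoke Remark~\ref{rmk:dualconditions} to reduce the task to verifying just two of the four conditions.
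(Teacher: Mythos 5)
Your proof is correct, and the computation goes through: with $h=\tau_{-n}\circ g\circ\tau_n^{-1}$ one indeed gets $h(\lambda_{\tau_n P})=\lambda_{V^*}$ and $h(\mu_V)=-\mu_{\tau_{-n}(P^*)}$ (in fact all four conditions of Definition~\ref{Defn:dualpattern} hold on the nose), because for a winding number one pattern the twist shifts the distinguished longitude by exactly $n$ meridians and the shifts cancel after conjugation. The underlying idea---conjugate the dualizing homeomorphism by the twist---is the same as the paper's, but the execution is genuinely different: the paper passes to $S^1\times S^2$ via Proposition~\ref{Prop:examples of dualizable patterns}, conjugates by the rotation $\Delta^{\pm n}$, and reads $\tau_{-n}(P^*)$ off a diagram while tracking $\widehat{\lambda_{\tau_n(P)}}$ pictorially, whereas you never leave the solid-torus complements and verify the peripheral conditions of Definition~\ref{Defn:dualpattern} by homological bookkeeping, using Remark~\ref{rmk:dualconditions} to cut the work in half. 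Your route is more self-contained and makes the framing shift explicit algebraically; the paper's route has the advantage of producing an explicit picture of the dual pattern, which is what the later examples and computations actually use. Two points worth tightening: the key claim $\tau_n(\lambda_P)=\lambda_{\tau_n(P)}+n\mu_{\tau_n(P)}$ is cleanest to justify by observing that framing curves are distinguished by their classes in $H_1$ of the complement and that $\lambda_P$ is characterized by being homologous there to $\lambda_V$, so $(\tau_n)_*[\lambda_P]=[\lambda_V]+n[\mu_V]=[\lambda_V]+n[\mu_{\tau_n(P)}]$, the last equality being where winding number one enters; and the ``exactly parallel formulae'' for $\tau_{-n}$ on $\partial\nu(P^*)$ require that $P^*$ also has winding number one, which follows since $P^*$ is itself dualizable (with dual $P$, witnessed by $g^{-1}$).
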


\begin{proof}
We construct $(\tau_n P)^*$. For this argument we will draw $P$ as the closure in $S^1\times D^2$ of a $2r+1$ strand tangle which we will denote with a box labeled $P$. Any parallel within this tangle box, as in Figure~\ref{Fig:dualtwist}, will be taken according to $\lambda_P$. 

In Figure \ref{Fig:dualtwist} we start to construct $(\tau_nP)^*$ as in Example \ref{Ex:J}.  In order to construct a homeomorphism between $V\smallsetminus \nu(P)$ and the complement of some pattern in a solid torus $V^*$ we look for a self-homeomorphism of $S^1\times S^2$ which takes $\widehat{\tau_n(P)}\sqcup\widehat{\lambda_V}$ to a link $L'$ such that $\widehat{\tau_n(P)}$ is sent to an $S^1\times \{pt\}$ component of $L'$. We also keep track of a copy of $\widehat{\lambda_{\tau_n(P)}}$ in green; note that within the tangle box the green curve is $\lambda_P$ framing the blue curve $P$. 

As a first step we apply $\Delta^{-n}$ to remove the $n$ twists from $\widehat{\tau_n(P)}$. Then the resulting link is $\widehat{P}\sqcup\widehat{\lambda_V}$, and the green curve is a copy of $\Gamma(\lambda_P-n\mu_P)$. Then since $P$ is dualizable there is a homeomorphism to the third diagram in Figure \ref{Fig:dualtwist}. Applying $\Delta^n$ we get a link $L'$ in which the images of $\widehat{\tau_n(P)}$ and $\widehat{\lambda_{\tau_n(P)}}$ are unlinked and isotopic to $S^1\times \{p_0\}$. From this diagram we can read off $(\tau_n P)^*$ and we see that it is $\tau_{-n}(P^*)$. 
\begin{figure}[h]
\includegraphics[width=8cm]{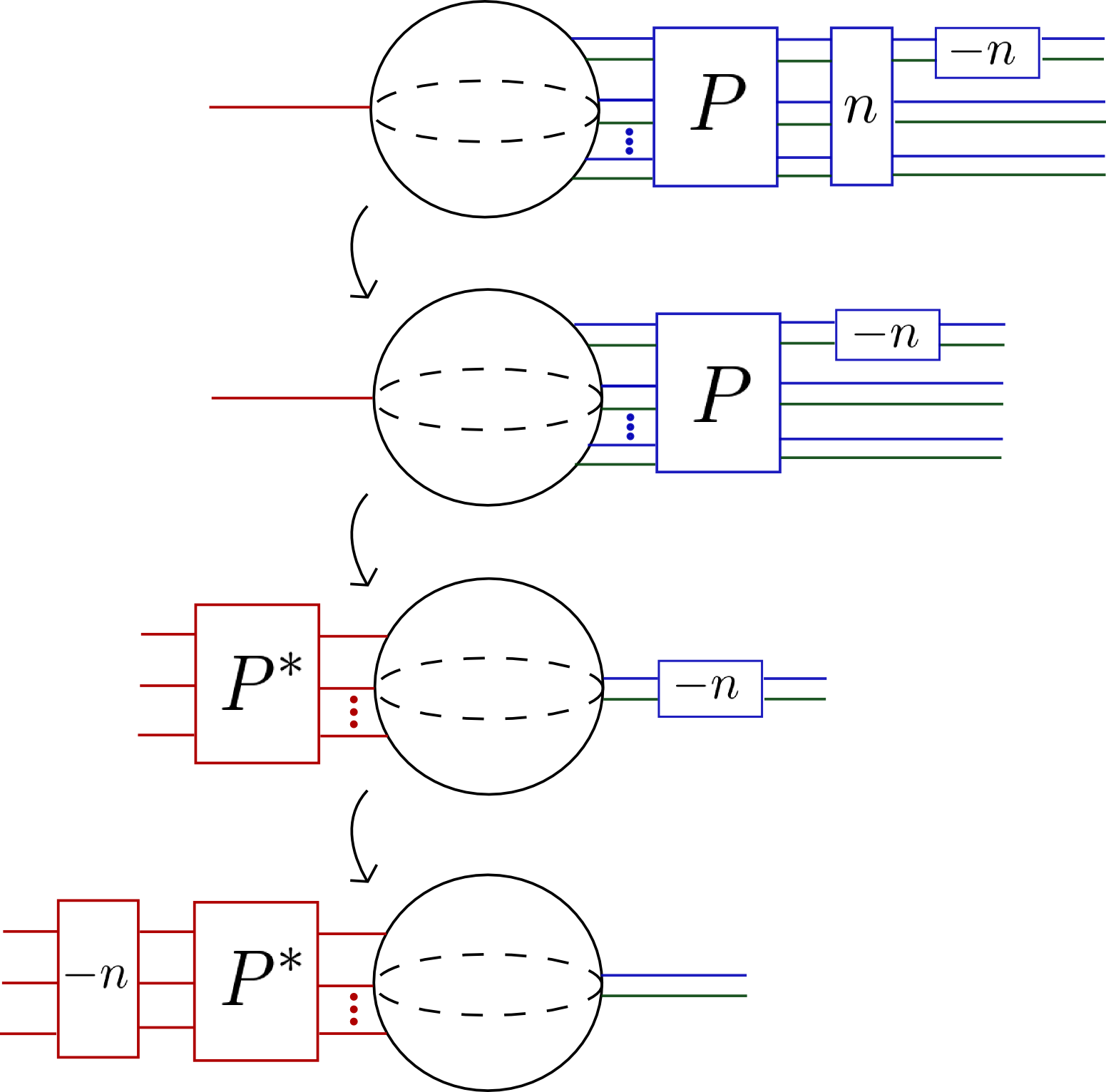}
\caption{The dual of $\tau_n P$ is $\tau_{-n} P^*$.}
\label{Fig:dualtwist}
\end{figure}
\end{proof}


\begin{proposition}
Let $P$ be a dualizable pattern with dual $P^*$. 
Then for any $n \in \Z$, $S^3_n(P(U)) \cong S^3_n\left[(\tau_{n}P^*)(U)\right]$. 
\end{proposition}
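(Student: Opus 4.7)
The plan is to realize both $S^3_n(P(U))$ and $S^3_n((\tau_n P^*)(U))$ as the same Dehn filling of the common exterior $V^* \smallsetminus \nu(P^*)$, generalizing the argument used to prove Theorem~\ref{Thm:diffeosurgeries}.

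First I would express $S^3_n(P(U))$ as the 3-manifold obtained from $V \smallsetminus \nu(P)$ by Dehn filling $\partial V$ along $\lambda_V$ (which restores the ambient $S^3$) and then Dehn filling $\partial \nu(P)$ along $\lambda_P + n\mu_P$ (the $n$-framed surgery on $P(U)$). Applying the dualizing homeomorphism $f: V \smallsetminus \nu(P) \to V^* \smallsetminus \nu(P^*)$ of Definition~\ref{Defn:dualpattern} and using $f(\lambda_V) = \lambda_{P^*}$, $f(\lambda_P) = \lambda_{V^*}$, and $f(\mu_P) = -\mu_{V^*}$, this rewrites $S^3_n(P(U))$ as $V^* \smallsetminus \nu(P^*)$ Dehn filled along the slope $\lambda_{P^*}$ on $\partial \nu(P^*)$ and the slope $\lambda_{V^*} - n\mu_{V^*}$ on $\partial V^*$.

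Next I would set up the analogous Dehn filling description of the target: $S^3_n((\tau_n P^*)(U))$ is $V^* \smallsetminus \nu(\tau_n P^*)$ filled along $\lambda_{V^*}$ on $\partial V^*$ and along $\lambda_{\tau_n P^*} + n\mu_{\tau_n P^*}$ on $\partial \nu(\tau_n P^*)$. Applying the self-homeomorphism $\tau_n^{-1}$ of $V^*$, which restricts to a diffeomorphism $V^* \smallsetminus \nu(\tau_n P^*) \to V^* \smallsetminus \nu(P^*)$, one pulls these slopes back to slopes on $\partial V^*$ and $\partial \nu(P^*)$. The aim is to verify that they become exactly $\lambda_{V^*} - n\mu_{V^*}$ and $\lambda_{P^*}$, matching the description above.

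The main technical point is the bookkeeping of framings under $\tau_n$. The action on $\partial V^*$ is immediate, since there $\tau_n$ is a Dehn twist about $\mu_{V^*}$ and sends $\lambda_{V^*} \mapsto \lambda_{V^*} + n\mu_{V^*}$ and $\mu_{V^*} \mapsto \mu_{V^*}$. The subtler point is verifying $\tau_n^{-1}(\lambda_{\tau_n P^*}) = \lambda_{P^*} - n\mu_{P^*}$: by naturality this pushoff of $P^*$ is homologous in $V^* \smallsetminus \nu(P^*)$ to $\tau_n^{-1}(\lambda_{V^*}) = \lambda_{V^*} - n\mu_{V^*}$, which equals $\lambda_{P^*} - n\mu_{P^*}$ because $\lambda_{P^*}$ and $\lambda_{V^*}$ (respectively $\mu_{P^*}$ and $\mu_{V^*}$) are homologous in the winding number one exterior $V^* \smallsetminus \nu(P^*)$. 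Since the map $H_1(\partial \nu(P^*)) \to H_1(V^* \smallsetminus \nu(P^*))$ is injective on the subgroup of framings, the pushoff is pinned down, so that $\tau_n^{-1}(\lambda_{\tau_n P^*} + n\mu_{\tau_n P^*}) = \lambda_{P^*}$, completing the identification. Signs can be sanity-checked against the $n=0$ reduction to Theorem~\ref{Thm:diffeosurgeries}.
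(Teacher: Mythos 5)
Your proof is correct and is essentially the paper's own argument: the paper simply states that the Dehn-filling proof of Theorem~\ref{Thm:diffeosurgeries} generalizes, and you carry out exactly that generalization, with the only added content being the (correct) bookkeeping that $\tau_{-n}$ sends the filling slopes $\lambda_{V^*}$ and $\lambda_{\tau_n P^*}+n\mu_{\tau_n P^*}$ to $\lambda_{V^*}-n\mu_{V^*}$ and $\lambda_{P^*}$, pinned down homologically using winding number one. No gaps; the sign conventions are consistent with the paper's definitions of $\tau_n$, $\mu$, and $\lambda$.
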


\begin{proof}
It is straightforward to generalize the proof of Theorem~\ref{Thm:diffeosurgeries} to this setting. 
\end{proof}

\section{Extending the 0-surgery diffeomorphism across the 0-trace}
Our goal of this section will be to prove the following theorem.
\begin{theorem}
Let $P$ be a dualizable pattern. Then there is a diffeomorphism $\Phi:X_0(P(U))\to X_0(P^*(U))$. 
\label{Thm:diffeotraces}
\end{theorem}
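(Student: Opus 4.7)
The plan is to mimic the Dehn-filling argument of Theorem~\ref{Thm:diffeosurgeries} at the 4-manifold level by Kirby calculus, stabilizing $X_0(P(U))$ by a cancelling 1-handle/2-handle pair chosen so that the resulting diagram, when translated into $S^1\times S^2$ via the $\widehat{\phantom{x}}$-construction of Section~\ref{subsection:definitions}, becomes visibly symmetric in $P$ and $P^*$.

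Concretely, I would let $h^1$ be the 1-handle with dotted circle $\mu_V\subset S^3$ (an unknot bounding a meridional disk of $V$) and let $h^2$ be a 0-framed 2-handle along $\lambda_V\subset\partial V$. Since $\lambda_V$ meets the meridional disk of $\mu_V$ in a single transverse point, the pair $(h^1,h^2)$ is cancelling, so $Z:=X_0(P(U))\cup h^1\cup h^2\cong X_0(P(U))$. Identifying $B^4\cup h^1$ with $S^1\times D^3$, whose boundary is $S^1\times S^2$, the map $\Gamma$ of Section~\ref{subsection:definitions} exhibits the two 2-handles of $Z$ as attached along $\widehat{P}$ and $\widehat{\lambda_V}$ in $S^1\times S^2$, with framings induced by the $S^3$ 0-framings.

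Next, by the construction in Example~\ref{Ex:J}, the dualizability of $P$ supplies a self-diffeomorphism $\Psi$ of $S^1\times S^2$---the composition of an ambient isotopy with a power of the rotation $\Delta$---sending $\widehat P\sqcup\widehat{\lambda_V}$ to $\widehat{\lambda_{V^*}}\sqcup\widehat{P^*}$. Since rotations of $S^2$ extend canonically to $D^3$, I would extend $\Psi$ over $B^4\cup h^1=S^1\times D^3$ and use it to transport the 2-handles, producing a diffeomorphism $Z\cong Z'$ where $Z'$ has a single 1-handle together with 2-handles attached along $\widehat{\lambda_{V^*}}$ and $\widehat{P^*}$.

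The hard part will be verifying that the framings $\Psi$ induces on $\widehat{\lambda_{V^*}}$ and $\widehat{P^*}$ are precisely the $S^3$ 0-framings of $\lambda_{V^*}$ and $P^*(U)$ with respect to the standard embedding $V^*\subset S^3$; tracking pushoffs through the isotopy and through $\Delta$ should reduce this to the full dualizability conditions $f(\lambda_V)=\lambda_{P^*}$ and $f(\lambda_P)=\lambda_{V^*}$ of Definition~\ref{Def:dualizable}. Granted this framing check, viewing the 1-handle of $Z'$ as $\mu_{V^*}$ in the Kirby diagram corresponding to the standard embedding $V^*\subset S^3$, the same disk-intersection computation as before shows that the 1-handle cancels with the 2-handle along $\widehat{\lambda_{V^*}}$, and destabilizing gives $Z'\cong X_0(P^*(U))$. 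The composition $X_0(P(U))\cong Z\cong Z'\cong X_0(P^*(U))$ is the desired diffeomorphism $\Phi$.
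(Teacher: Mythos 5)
Your proposal is sound, and it proves the theorem by a genuinely different mechanism than the paper's. The paper starts from the boundary homeomorphism $\phi$ of Theorem~\ref{Thm:diffeosurgeries} and extends it across the traces via Akbulut's criterion (Lemma~\ref{Lem:akbulutextension}): the meridian $\mu_{P(U)}$ and its image $i_U(\mu_{V^*})$ bound pushed-in disks with matching 0-framings, and the carved complements are identified with $B^4$ by sliding the 2-handle over the 1-handle, using Proposition~\ref{Prop:examples of dualizable patterns} applied to $\widehat{P^*}$. You never invoke $\phi$ or the extension lemma: you stabilize $X_0(P(U))$ by the cancelling pair (dotted $\mu_V$, 0-framed $\lambda_V$), view the two 2-handles as attached along $\widehat{P}\sqcup\widehat{\lambda_V}$ in $\partial(S^1\times D^3)$, push forward by a diffeomorphism $\Psi$ (an isotopy composed with a power of $\Delta$, both of which extend over $S^1\times D^3$), and cancel the other pair. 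Your flagged ``hard part'' does go through, and it is worth recording why: since the Seifert framing of $P(U)$ is $i_U(\lambda_P)$, the condition $f(\lambda_P)=\lambda_{V^*}$ says $\Psi$ carries that framing to the product framing of $S^1\times\{pt\}$, and this is exactly what guarantees that surgering $S^1\times\{pt\}$ along the cancelling handle re-embeds the complementary solid torus $V^*$ into the new $S^3$ boundary as the standard 0-framed solid torus (a framing off by $k$ would instead produce $\tau_{\pm k}(P^*)$, so this check cannot be skipped); likewise $f(\lambda_V)=\lambda_{P^*}$ sends the 0-framing of the unknot $\lambda_V$ to $\lambda_{P^*}$, whose image $i_U(\lambda_{P^*})$ is the Seifert framing of $P^*(U)$, so the surviving handle is $P^*(U)$ with framing 0. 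As for what each approach buys: the paper's argument produces a diffeomorphism that restricts on the boundary to the explicit 0-surgery homeomorphism $\phi$, in keeping with the section's theme of extending that map; yours gives no control of $\Phi|_{\partial}$ but is self-contained handle calculus, is visibly symmetric in $P$ and $P^*$, and makes the $n$-trace generalization of Theorem~\ref{Thm:diffeontraces} transparent by adjusting the framings on the auxiliary pair.
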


First, we recall a result of Akbulut which first appeared in \cite{Akb77}. We refer the reader to proofs in \cite{AJOT13} and \cite{Akb16}. For the details of handle calculus see \cite{GS99}.
\begin{lemma}
Let $M$ and $N$ be  4-manifolds with a homeomorphism $\psi:\partial M\to\partial N$. If the following are true of $\psi$, then there exists a diffeomorphism $\Psi:M\to N$ such that $\Psi|_{\partial}=\psi$.
\begin{enumerate}
\item There exists some $K:S^1\to\partial  M$ which bounds a smoothly embedded disk $D_K$ in $M$ and with the property that $\psi(K)$ bounds a smoothly embedded disk $D_{\psi(K)}$ in $N$.
\item Let $D_K'$ be a section of $\overline{\nu(D_K)}$ and $D_{\psi(K)}'$ be a section of $\overline{\nu(D_{\psi(K)})}$. Then $\psi(\partial D_K')$ and $\partial D_{\psi(K)}'$ induce the same framing on $\psi(K)$.
\item $M\smallsetminus\nu(D_K)\cong N\smallsetminus\nu(D_{\psi(K)})\cong B^4$.
\end{enumerate}
\label{Lem:akbulutextension}
\end{lemma}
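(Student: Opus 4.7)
The plan is to interpret the hypotheses as giving $M$ and $N$ handle decompositions consisting of a 0-handle and a single 2-handle, then extend $\psi$ first across the 2-handles using conditions (1) and (2), and finally across the 0-handles using Cerf's theorem. Since every homeomorphism between closed 3-manifolds is isotopic to a diffeomorphism (the smooth structure in dimension 3 is unique), I may replace $\psi$ by an isotopic diffeomorphism from the start.

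First, I would use condition (3) to rewrite $M = (M \smallsetminus \nu(D_K)) \cup \nu(D_K)$, where $M \smallsetminus \nu(D_K) \cong B^4$ plays the role of a 0-handle and $\nu(D_K) \cong D^2 \times D^2$ is a 2-handle attached along the framed knot $(K, \partial D_K')$ in $\partial M$. Symmetrically, $N$ decomposes as a 0-handle $N \smallsetminus \nu(D_{\psi(K)}) \cong B^4$ together with a 2-handle $\nu(D_{\psi(K)})$ attached along $(\psi(K), \partial D_{\psi(K)}')$. Let $A \subset \partial M$ be the attaching region for $\nu(D_K)$, namely a tubular neighborhood of $K$ framed by $\partial D_K'$. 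By condition (1), $\psi(A)$ is a tubular neighborhood of $\psi(K)$, and by condition (2) the framing on $\psi(K)$ induced by $\psi(\partial D_K')$ agrees with the framing induced by $\partial D_{\psi(K)}'$. These facts together identify $\psi|_A$ with the attaching data for $\nu(D_{\psi(K)})$, so by the standard parametrization of a 2-handle as $D^2 \times D^2$ I can extend $\psi|_A$ to a diffeomorphism $\Psi_2 \colon \nu(D_K) \to \nu(D_{\psi(K)})$.

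Combining $\Psi_2$ with $\psi$ on $\partial M \smallsetminus A$ yields an orientation-preserving diffeomorphism of the boundary 3-spheres $\partial(M \smallsetminus \nu(D_K)) \to \partial(N \smallsetminus \nu(D_{\psi(K)}))$, both of which are $S^3$ by condition (3). By Cerf's theorem that $\Gamma_4 = 0$, any orientation-preserving diffeomorphism of $S^3$ extends to a diffeomorphism of $B^4$, and I would use this to produce a diffeomorphism $\Psi_1 \colon M \smallsetminus \nu(D_K) \to N \smallsetminus \nu(D_{\psi(K)})$ that matches the boundary map. Gluing $\Psi_1$ and $\Psi_2$ along their common boundary then produces the desired diffeomorphism $\Psi \colon M \to N$ with $\Psi|_\partial = \psi$.

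The main obstacle is the appeal to Cerf's theorem: without it the argument only produces a homeomorphism (via the Alexander trick), which is precisely why this lemma lives in the smooth category and not merely the topological one. The secondary technical point is guaranteeing that the extension of $\psi|_A$ to $\Psi_2$ can be chosen to match $\psi$ on \emph{all} of $A$, not merely along $K$; this follows from uniqueness of tubular neighborhoods once the framing is specified, which is exactly the content of condition (2). All remaining steps are routine handle calculus.
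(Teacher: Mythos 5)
Your argument is correct, and it is essentially the standard proof of this lemma: the paper itself does not prove it but defers to \cite{Akb77}, \cite{AJOT13}, and \cite{Akb16}, where the same strategy appears --- view $\nu(D_K)$ and $\nu(D_{\psi(K)})$ as $2$-handles attached to the carved-out copies of $B^4$, use conditions (1) and (2) to extend $\psi$ over the handles, and use condition (3) together with Cerf's theorem ($\Gamma_4=0$) to extend over the remaining $4$-balls. The only quibble is phrasing: $\nu(D_K)$ is a $2$-handle attached to $M\smallsetminus\nu(D_K)$ along a meridian of $D_K$ (with $D_K$ as its cocore) rather than ``along $K$ in $\partial M$,'' but since you correctly take $A=\nu(D_K)\cap\partial M$ as the region over which $\psi$ is extended, the argument is unaffected.
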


\begin{proof}[(Theorem \ref{Thm:diffeotraces})]
We will check that  for $M:=X_0(P(U))$ and $N:=X_0(P^*(U))$ the homeomorphism $\phi:\partial M\to \partial N$ as in Theorem \ref{Thm:diffeosurgeries} satisfies the conditions of Lemma~\ref{Lem:akbulutextension}.

 Let $K$ be $\mu_{P(U)}= i_U(\mu_P)$ in $\partial M$. Then $K$ bounds a disk $D_K$  in $M$, namely a disk in $S^3$  with its interior pushed slightly into $B^4 \subseteq X_0(P(U))=M$. One checks that $\partial D'_K$ induces the 0-framing on $K$. By inspection of $\phi$ we see that $\phi(K)$ is $i_U(\mu_{V^*})\subset \partial N$. Just as we saw for $K$, the knot $\phi(K)$ bounds a disk $D'_{\phi(K)}$ in $N$ and $\partial D'_{\phi(K)}$ induces the 0-framing on $\phi(K)$.  Further inspection of $\phi$ shows that $\phi(\partial D'_K)$ induces the 0-framing on $\phi(K)$. So conditions (1) and (2) are satisfied. 

Figure \ref{Fig:traceshandle} demonstrates handle diagrams of  $M\smallsetminus\nu(D_K)$ and $N\smallsetminus\nu(D_{\phi(K)})$. It is clear that $M\smallsetminus\nu(D_K)\cong B^4$. 
\begin{figure}[h]
\includegraphics[width=10cm]{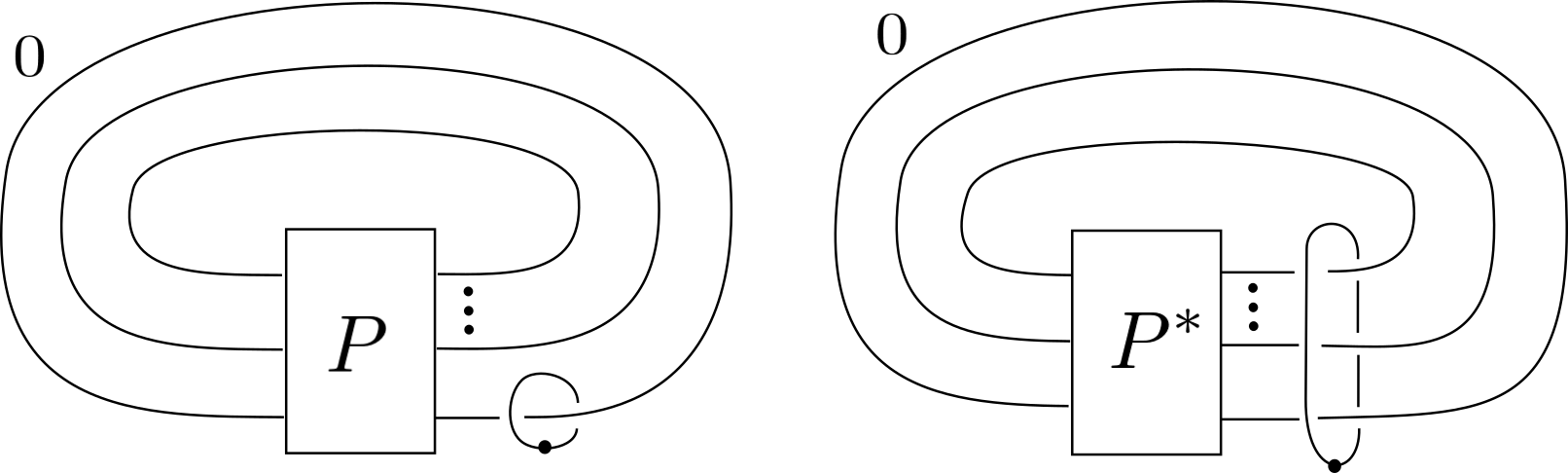}
\caption{Handle diagrams for $M\smallsetminus\nu(D_K)$ and $N\smallsetminus\nu(D_{\phi(K)})$, respectively.}
\label{Fig:traceshandle}
\end{figure}

Observe that $\partial (N\smallsetminus\nu(D_{\phi(K)}))$ can be interpreted as a Dehn surgery on $\widehat{P^*}\subset S^1\times S^2$. By Proposition \ref{Prop:examples of dualizable patterns}, there is some isotopy of $\widehat{P^*}$ to $S^1\times\{pt\}$.
This implies that in the given handle decomposition of $N\smallsetminus\nu(D_{\phi(K)})$ there is some sequence of slides of the two handle over the one handle which results in a handle diagram for $N\smallsetminus\nu(D_{\phi(K)})$ as in Figure \ref{Fig:ntraceslide}. It is clear from Figure \ref{Fig:ntraceslide} that $N\smallsetminus\nu(D_{\phi(K)})\cong B^4$. 

\begin{figure}[h]
\includegraphics[width=3cm]{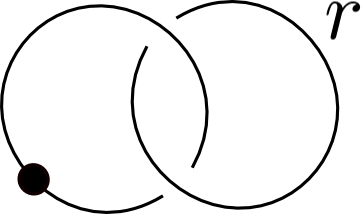}
\caption{ A handle diagram for $N\smallsetminus\nu(D_{\phi(K)})$, where $r$ is some integer.}
\label{Fig:ntraceslide}
\end{figure}
\end{proof}

For a pattern $Q$ in a solid torus $V$, let $j_Q:S^1\times D^2 \to V$ be an identification of $S^1 \times D^2$ with $\overline{\nu(Q)}$ such that $j_Q(S^1\times\{pt\})=\lambda_Q$. For $P$ a pattern in $S^1 \times D^2$ we define $P\circ Q:=j_Q\circ P:S^1\to V$. 

\begin{proposition}
\label{Prop:compositedual}
Let $P$ and $Q$ be dualizable patterns. Then $P\circ Q$ is dualizable with dual $Q^*\circ P^*$. 
\end{proposition}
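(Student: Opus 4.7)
The plan is to construct an explicit dualizing homeomorphism for $P\circ Q$ by gluing the given dualizing homeomorphisms for $P$ and $Q$ along the torus $\partial\overline{\nu(Q)}$. Write $P\subset V$ with dual $P^*\subset V^*$ and $Q\subset W$ with dual $Q^*\subset W^*$, and let $F_P\colon V\smallsetminus\nu(P)\to V^*\smallsetminus\nu(P^*)$ and $F_Q\colon W\smallsetminus\nu(Q)\to W^*\smallsetminus\nu(Q^*)$ be the given dualizing homeomorphisms. The key geometric intuition is that each dualization swaps the roles of the inner and outer solid tori of a satellite, so combining two of them calls for a crossing-style gluing.

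First I would record the natural satellite decompositions
\[
W\smallsetminus\nu(P\circ Q) = (W\smallsetminus\nu(Q)) \cup_{\partial\overline{\nu(Q)}} j_Q(V\smallsetminus\nu(P))
\]
and symmetrically $V^*\smallsetminus\nu(Q^*\circ P^*) = (V^*\smallsetminus\nu(P^*))\cup_{\partial\overline{\nu(P^*)}} j_{P^*}(W^*\smallsetminus\nu(Q^*))$. I would then define the candidate homeomorphism $F\colon W\smallsetminus\nu(P\circ Q)\to V^*\smallsetminus\nu(Q^*\circ P^*)$ piecewise: on the outer region $W\smallsetminus\nu(Q)$ set $F = j_{P^*}\circ F_Q$ (landing in the inner region of the target), and on the inner region $j_Q(V\smallsetminus\nu(P))$ set $F = F_P\circ j_Q^{-1}$ (landing in the outer region of the target).

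Next I would verify that the two pieces agree on the interface torus $\partial\overline{\nu(Q)}$. Both compositions send $\lambda_Q\mapsto\lambda_{P^*}$ and $\mu_Q\mapsto -\mu_{P^*}$: from the outer side via the defining conditions on $F_Q$ followed by $j_{P^*}$, and from the inner side via $j_Q^{-1}$ (which identifies $\lambda_Q,\mu_Q$ with $\lambda_V,\mu_V$) followed by $F_P$. Since the two maps induce the same automorphism on $H_1$ of this torus, a standard isotopy in a collar matches them as homeomorphisms, yielding a well-defined $F$.

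Finally I would check the four conditions of Definition~\ref{Defn:dualpattern}. The meridian conditions are straightforward once one notes that $j_Q(\mu_P)=\mu_{P\circ Q}$ and $j_{P^*}(\mu_{Q^*})=\mu_{Q^*\circ P^*}$, which are immediate since $j_Q$ and $j_{P^*}$ are homeomorphisms of solid-torus neighborhoods. The main obstacle is establishing the analogous longitude identities $j_Q(\lambda_P) = \lambda_{P\circ Q}$ and $j_{P^*}(\lambda_{Q^*}) = \lambda_{Q^*\circ P^*}$, since the pattern longitude is defined by a homological condition in the \emph{entire} ambient complement rather than inside the tubular neighborhood $\overline{\nu(Q)}$ alone. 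To handle this I would invoke the fact (established after Proposition~\ref{Prop:examples of dualizable patterns}) that every dualizable pattern has algebraic winding number one: then $\lambda_P\sim\lambda_V$ in $V\smallsetminus\nu(P)$ pushes forward under $j_Q$ to the relation $j_Q(\lambda_P)\sim\lambda_Q$ in $\overline{\nu(Q)}\smallsetminus\nu(P\circ Q)$, which survives inclusion into $W\smallsetminus\nu(P\circ Q)$ where in turn $\lambda_Q\sim\lambda_W$; uniqueness of the pattern longitude then forces $j_Q(\lambda_P)=\lambda_{P\circ Q}$, and the dual identity follows symmetrically. With these identifications in hand, the four required conditions drop out directly from the piecewise definition of $F$.
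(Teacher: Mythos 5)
Your proof is correct and takes essentially the same route as the paper: decompose $W\smallsetminus\nu(P\circ Q)$ as the union of the complements of $Q$ and $P$ glued along $\partial\overline{\nu(Q)}$, transport it piecewise by the two dualizing homeomorphisms, and recognize the result as $V^*\smallsetminus\nu(Q^*\circ P^*)$ with the boundary conditions of Definition~\ref{Defn:dualpattern}. You merely spell out the interface-matching and the identification $j_Q(\lambda_P)=\lambda_{P\circ Q}$ that the paper calls straightforward (and for the latter, winding number one is not even needed, since $\lambda_P$ being homologous to a positive multiple of $\lambda_V$ already suffices).
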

\begin{proof}
For clarity, we write each pattern $R \in \{ P, Q, P \circ Q, Q^* \circ P^*\}$ as living in a solid torus $V_R$. 
Observe that $V_{P \circ Q} \smallsetminus\nu(P\circ Q)$ can be written as the union of $(V_Q\smallsetminus\nu(Q))$ with $(V_P\smallsetminus\nu(P))$ via an identification of  $\lambda_{V_P}$ with $\lambda_Q$ and $\mu_{V_P}$ with $\mu_Q$. Since $P$ and $Q$  are dualizable, we see that $V_{P \circ Q}\smallsetminus\nu(P\circ Q)\cong (V_Q^*\smallsetminus\nu(Q^*))\cup(V_P^*\smallsetminus\nu(P^*))$ where the gluing  identifies $\lambda_{P^*}$ with $\lambda_{V_Q^*}$ and $\mu_{P^*}$ with $-\mu_{V_Q^*}$. But this gives a description of $V_{Q^* \circ P^*}\smallsetminus \nu(Q^*\circ P^*)$ in which it is straightforward to check that the longitude and meridian conditions of Definition~\ref{Defn:dualpattern} are satisfied. 
\end{proof}

\begin{corollary}\label{Cor:sumtrace}
For $P$ a dualizable pattern and $K$ a knot in $S^3$, $X_0(P(K))\cong X_0(P^*(U) \# K).$
\end{corollary}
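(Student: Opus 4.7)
The plan is to deduce this corollary by combining Theorem \ref{Thm:diffeotraces} with Proposition \ref{Prop:compositedual}, using a ``connected sum'' pattern to absorb $K$ into a composite satellite operation.

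For any knot $K \subset S^3$, I would first introduce the pattern $C_K$ in a solid torus $V$ defined by placing $K$ in a small ball sitting inside $V$ together with an arc realizing connected sum, so that $C_K$ has geometric winding number one and satisfies $C_K(J) = J \# K$ for every knot $J$ in $S^3$; in particular $C_K(U) = K$. The remark immediately following Proposition \ref{Prop:examples of dualizable patterns} says that every geometric winding number one pattern is dualizable and is its own dual, so $C_K$ is dualizable with $C_K^* = C_K$.

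Next I would invoke the satellite composition identity $P(C_K(U)) = (P \circ C_K)(U)$, which is built into the definition of $\circ$ given just before Proposition \ref{Prop:compositedual}. Applying Proposition \ref{Prop:compositedual} to the two dualizable patterns $P$ and $C_K$ yields that $P \circ C_K$ is dualizable with
\[
(P \circ C_K)^* \;=\; C_K^* \circ P^* \;=\; C_K \circ P^*.
\]
Now Theorem \ref{Thm:diffeotraces} applied to the dualizable pattern $P \circ C_K$ gives
\[
X_0\bigl((P \circ C_K)(U)\bigr) \;\cong\; X_0\bigl((C_K \circ P^*)(U)\bigr),
\]
and unwinding the composition again with $(C_K \circ P^*)(U) = C_K(P^*(U)) = P^*(U) \# K$ produces exactly $X_0(P(K)) \cong X_0(P^*(U) \# K)$.

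The only real point to verify carefully is that $C_K$ is indeed dualizable in the sense of Definition \ref{Defn:dualpattern}, which amounts to checking that a geometric winding one pattern lies in a ball-neighborhood of a longitudinal arc, so that $\widehat{C_K}$ is isotopic to $\widehat{\lambda_V}$ in $S^1 \times S^2$; this is routine and is already asserted after Proposition \ref{Prop:examples of dualizable patterns}. Everything else is a formal manipulation of the composition operation and an invocation of the two main results of this section, so there is no substantial technical obstacle.
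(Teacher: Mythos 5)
Your proof is correct and is essentially identical to the paper's: the paper likewise introduces the geometric winding number one pattern $K_\#$ with $K_\#(U)\simeq K$, observes it is dualizable and self-dual, and concludes by applying Proposition~\ref{Prop:compositedual} and Theorem~\ref{Thm:diffeotraces} to $(P\circ K_\#)(U)$. Your write-up simply spells out the same steps in more detail.
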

\begin{proof}
Let $K_\#$ be the geometric winding number one pattern with $K_\#(U)\simeq K$. Observe that $K_\#$ is dualizable and self dual. Then consider $(P\circ K_\#)(U)$ and apply Proposition~\ref{Prop:compositedual} and Theorem~\ref{Thm:diffeotraces}. 
\end{proof}

\begin{corollary}
For $P$ dualizable, $\overline{P^*}(P(U))\sim U\sim P(\overline{P^*}(U))$.
\label{Cor:invertibleforU}
\end{corollary}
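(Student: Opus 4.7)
The plan is to reduce each of the two required concordances to the classical fact that $J \# \overline{J}$ is ribbon, hence smoothly slice, for any knot $J$. The two main tools are Corollary~\ref{Cor:sumtrace}, which relates the $0$-trace of a satellite to the $0$-trace of a connected sum, and Corollary~\ref{Cor:slicetrace}, which says sliceness is an invariant of the $0$-trace. The necessary preliminaries are the identities $(P^*)^* \simeq P$ (obtained by inverting the defining homeomorphism in Definition~\ref{Defn:dualpattern}, which interchanges the roles of $V$ and $V^*$) and $(\overline{Q})^* \simeq \overline{Q^*}$ (noted immediately after Definition~\ref{defn:bar}); together these give $(\overline{P^*})^* \simeq \overline{P}$ and the dualizability of $\overline{P^*}$. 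I also record that $\overline{P}(U) = \overline{P(U)}$, since mirror-reversing the pattern reverses both the ambient $S^3$ and the knot's orientation, while the unknot is unchanged under mirror-reversal.

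For the first concordance, apply Corollary~\ref{Cor:sumtrace} to the dualizable pattern $\overline{P^*}$ with companion $K = P(U)$ to obtain
\[
X_0\bigl(\overline{P^*}(P(U))\bigr) \cong X_0\bigl((\overline{P^*})^*(U) \# P(U)\bigr) = X_0\bigl(\overline{P(U)} \# P(U)\bigr).
\]
Since $\overline{P(U)} \# P(U)$ is ribbon, Corollary~\ref{Cor:slicetrace} implies that $\overline{P^*}(P(U))$ is smoothly slice, i.e.\ $\overline{P^*}(P(U)) \sim U$.

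For the second concordance, apply Corollary~\ref{Cor:sumtrace} directly to $P$ with companion $K = \overline{P^*}(U)$ to obtain
\[
X_0\bigl(P(\overline{P^*}(U))\bigr) \cong X_0\bigl(P^*(U) \# \overline{P^*}(U)\bigr) = X_0\bigl(P^*(U) \# \overline{P^*(U)}\bigr),
\]
and conclude in the same way via Corollary~\ref{Cor:slicetrace} that $P(\overline{P^*}(U)) \sim U$.

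The only step requiring any attention is the verification of the two preliminary identities $(\overline{P^*})^* \simeq \overline{P}$ and $\overline{P}(U) = \overline{P(U)}$; both are short unpackings of the relevant definitions, and no substantive obstacle is anticipated.
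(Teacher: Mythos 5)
Your proof is correct and is essentially the paper's argument: the paper establishes $X_0(\overline{P^*}(P(U)))\cong X_0(P(U)\#\overline{P}(U))$ by composing with the connected-sum pattern $P_\#$ and invoking Proposition~\ref{Prop:compositedual} and Theorem~\ref{Thm:diffeotraces}, which is exactly the content of Corollary~\ref{Cor:sumtrace} that you cite (applied to $\overline{P^*}$, using $(\overline{P^*})^*\simeq\overline{P}$), and both arguments then finish by sliceness of $K\#\overline{K}$ together with Corollary~\ref{Cor:slicetrace}. Your preliminary identities $(P^*)^*\simeq P$, $(\overline{Q})^*\simeq\overline{Q^*}$, and $\overline{P}(U)=\overline{P(U)}$ are the right (and easily verified) bookkeeping to make that citation legitimate.
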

\begin{proof}
Let $P_\#$ be the geometric winding number one pattern with $P_\#(U)=P(U)$. Then by Theorem~\ref{Thm:diffeotraces} and Proposition~\ref{Prop:compositedual} we have
 \[X_0(\overline{P^*}(P(U)))\cong X_0(\overline{P^*}(P_\#(U)))\cong X_0((\overline{P^*}\circ P_\#)(U))\cong X_0((P_\#\circ \overline{P})(U)))\cong X_0(P(U)\# \overline{P}(U))\]
It follows from Definition~\ref{defn:bar} that $P(U)\# \overline{P}(U)$ is slice, so by Corollary~\ref{Cor:slicetrace}, we have that  $\overline{P^*}(P(U))$ is slice as well. The other concordance follows similarly. 
\end{proof}

\begin{proof}[(Theorem~\ref{Thm:invertible})]
Let $K_\#$ be the geometric winding number one pattern with $K_\#(U)=K$ as above. Observe that 
\[\overline{P^*}(P(K))\# -K = (\overline{K_\#}\circ \overline{P^*})( P\circ K_\# )(U) =\overline{(K_\#\circ P^*)}( P\circ K_\# )(U) .\]
 By Proposition \ref{Prop:compositedual}, we have that $(K_{\#} \circ P^*)$ is the dual of $(P \circ K_{\#})$.  
 Thus by Corollary \ref{Cor:invertibleforU} $\overline{P^*}(P(K))\#-K$ is slice and $\overline{P^*}(P(K))\sim K$.
 The argument that $P(\overline{P^*}(K))\sim K$ is analogous. 
\end{proof}

The following generalization of Theorem~\ref{Thm:diffeotraces} is proved via an identical argument; we leave the proof to the reader.
\begin{theorem}
\label{Thm:diffeontraces}
For $P$ a dualizable pattern, $X_n(P(U)) \cong X_n\left[(\tau_{n}P^*)(U)\right]$.
\end{theorem}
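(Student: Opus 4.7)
The plan is to follow the proof of Theorem~\ref{Thm:diffeotraces} nearly verbatim, since the $n$-surgery diffeomorphism of the preceding proposition plays exactly the role of Theorem~\ref{Thm:diffeosurgeries} in the original argument. Setting $M := X_n(P(U))$ and $N := X_n[(\tau_n P^*)(U)]$, the preceding proposition produces a boundary diffeomorphism $\phi: \partial M \to \partial N$, and I would aim to extend it across the 4-manifolds using Lemma~\ref{Lem:akbulutextension}. Concretely, I need to find a knot $K \subset \partial M$ bounding a disk $D_K \subset M$ so that $\phi(K)$ bounds a disk $D_{\phi(K)} \subset N$ with matching framing, and show that both $M \smallsetminus \nu(D_K)$ and $N \smallsetminus \nu(D_{\phi(K)})$ are diffeomorphic to $B^4$.

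First, I would take $K = \mu_{P(U)} = i_U(\mu_P)$ in $\partial M$. This knot bounds a pushed-in meridional disk $D_K$ in $M$ whose section induces the 0-framing on $K$. By explicit inspection of the surgery diffeomorphism $\phi$ (which arises via the identification of $V \smallsetminus \nu(\tau_n P) \cong V^* \smallsetminus \nu(\tau_{-n} P^*)$ used in Proposition~\ref{Prop:twist}), $\phi(K)$ is a meridian $i_U(\mu_{V^*})$ in $\partial N$, which analogously bounds a pushed-in 0-framed disk $D_{\phi(K)}$. A direct check on the boundary identification confirms that $\phi(\partial D_K')$ induces the 0-framing on $\phi(K)$, verifying conditions (1) and (2) of Lemma~\ref{Lem:akbulutextension}.

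Next, I would produce handle diagrams for $M \smallsetminus \nu(D_K)$ and $N \smallsetminus \nu(D_{\phi(K)})$ as in the original argument. Carving $D_K$ out of $M$ converts the disk into a 1-handle, which together with the $n$-framed 2-handle of $X_n(P(U))$ forms a canceling pair (the $n$-framing causes no obstruction to cancellation), so $M \smallsetminus \nu(D_K) \cong B^4$. For $N \smallsetminus \nu(D_{\phi(K)})$, the analogous handle diagram has a 1-handle together with the $n$-framed 2-handle along $\tau_n P^*$. The boundary is an $n$-surgery on $\widehat{\tau_n P^*} \subset S^1 \times S^2$; since $P^*$ is dualizable (being itself the dual of a dualizable pattern), $\widehat{P^*}$ is isotopic to $S^1 \times \{pt\}$ by Proposition~\ref{Prop:examples of dualizable patterns}, and after absorbing the $\tau_n$ twists into handle slides one reduces the 2-handle to a meridian of the 1-handle with framing equal to $n$ plus a correction arising from the twists (as in Figure~\ref{Fig:ntraceslide}). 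The resulting 1-/2-handle pair cancels, giving $B^4$.

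The main technical point is bookkeeping the framings through the handle slides, since the twist $\tau_n$ alters the framing of the 2-handle in a way that depends on $n$. I expect this to be only a careful but routine computation: regardless of the integer framing produced, the 2-handle is attached along a meridian of the 1-handle and the pair still cancels. Once both complements are identified with $B^4$, Lemma~\ref{Lem:akbulutextension} upgrades $\phi$ to a diffeomorphism $\Phi: X_n(P(U)) \to X_n[(\tau_n P^*)(U)]$, establishing the theorem.
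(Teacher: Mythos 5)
Your proposal is correct and takes the same route the paper intends: the paper explicitly states that Theorem~\ref{Thm:diffeontraces} ``is proved via an identical argument'' to Theorem~\ref{Thm:diffeotraces}, and that is exactly what you carry out, extending the $n$-surgery boundary homeomorphism of the preceding proposition across the traces via Lemma~\ref{Lem:akbulutextension}, with the meridian and its pushed-in disk playing the same role and the only new content being the routine framing/handle-slide bookkeeping for the $\tau_n$ twist that you correctly flag.
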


\section{Distinguishing certain $P(U)$ and $P^*(U)$ in concordance}

We consider the double branched covers of knots $K_k:=(\tau_{2k-1} J)(U)$ for $k\in\Z$ and $J$ the dualizable pattern of Example~\ref{Ex:J}. A standard argument shows that if two knots $K$ and $K'$ are  concordant then their double branched covers $\Sigma_2(K)$ and $\Sigma_2(K')$ are rational homology cobordant. We will use the d-invariants of \Ozsvath and \Szabo to show that there is no such cobordism between the double branched covers of certain $K_k$ and $K_j$. For the reader's convenience, we state the facts we will need about the behavior of d-invariants.

\begin{theorem}[(Theorem 9.6 of \cite{OS03abs})] \label{thm:dinvariantfacts}
To a rational homology sphere $Y^3$ and a $\Spc$-structure $\s$ on $Y$ there is an invariant $d(Y, \s) \in \Q$ satisfying the following properties:
 If $W$ is a cobordism from $Y_0$ to $Y_1$ (that is, with $\partial W= -Y_0 \sqcup Y_1$), then for any $\s \in \Spc(W)$
\begin{align}
 \di(Y_1, \s|_{Y_1}) &\geq \di(Y_0, \s|_{Y_0}) + \frac{c_1(\s)^2 + \beta_2(W)}{4} \text{ if } W \text{ is negative definite.}
  \label{fact:+def} \\
 \di(Y_1, \s|_{Y_1}) &\leq \di(Y_0, \s|_{Y_0}) + \frac{c_1(\s)^2 - \beta_2(W)}{4} \text{ if } W \text{ is positive definite.}
   \label{fact:-def}\\
 \di(Y_1, \s|_{Y_1}) &= \di(Y_0, \s|_{Y_0}) \text{ if } W \text{ is a rational homology cobordism.} \label{fact:rationalhomologycob}
\end{align}
\end{theorem}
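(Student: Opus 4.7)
The statement recalls foundational properties of the \Ozsvath--\Szabo correction term from \cite{OS03abs}, so rather than prove it from scratch I would outline the standard argument. Recall that for a rational homology sphere $Y$, the group $HF^\infty(Y,\s)$ has the form $\Z[U,U^{-1}]$, and $d(Y,\s)$ is defined as the minimal $\Q$-grading of a non-torsion element of $HF^+(Y,\s)$ lying in the image of the natural map $HF^\infty(Y,\s)\to HF^+(Y,\s)$. The key computational input is that any $\Spc$-cobordism $(W,\s)$ from $Y_0$ to $Y_1$ induces a map $F^+_{W,\s}\colon HF^+(Y_0,\s|_{Y_0})\to HF^+(Y_1,\s|_{Y_1})$ which shifts absolute $\Q$-grading by
\[ \Delta(W,\s)=\frac{c_1(\s)^2-2\chi(W)-3\sigma(W)}{4}. \]

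For each of the three inequalities the plan is the same. If $W$ is negative definite, then I would verify that the induced map $F^\infty_{W,\s}$ on $HF^\infty$ is an isomorphism; this forces a bottom generator of the image tower in $HF^+(Y_0,\s|_{Y_0})$, which sits in grading $d(Y_0,\s|_{Y_0})$, to map to a non-torsion element of $HF^+(Y_1,\s|_{Y_1})$ of grading at least $d(Y_1,\s|_{Y_1})$. Combining this with the grading shift $\Delta(W,\s)$ and the standard identities relating $\chi(W)$, $\sigma(W)$ and $\beta_2(W)$ for a negative definite cobordism between rational homology spheres yields \eqref{fact:+def}. The positive-definite inequality \eqref{fact:-def} follows by applying \eqref{fact:+def} to the reversed cobordism $-W$ together with the symmetry $d(-Y,\s)=-d(Y,\s)$. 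Finally, if $W$ is a rational homology cobordism then $\beta_2(W)=0$ and $c_1(\s)^2=0$ rationally, so both inequalities apply simultaneously and collapse to the equality \eqref{fact:rationalhomologycob}.

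The main technical obstacle is the assertion that $F^\infty_{W,\s}$ is an isomorphism when $W$ is negative definite. This is the crux of \cite{OS03abs} and relies on the classification of $HF^\infty$ for rational homology spheres (which depends only on $H^1$), together with a careful study of the cobordism-induced maps via $U$-equivariance and a model computation on $\#^k \overline{\mathbb{CP}^2}$. Once this is granted, the remainder of the argument is the purely formal grading manipulation sketched above, and yields all three statements uniformly.
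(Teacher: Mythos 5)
The paper offers no proof of this statement---it is quoted as Theorem 9.6 of \cite{OS03abs}---so the only meaningful comparison is with the original Ozsv\'ath--Szab\'o argument, which your outline follows in its general shape: $\di(Y,\s)$ is the minimal grading of the image of $\pi\colon HF^\infty(Y,\s)\to HF^+(Y,\s)$, the cobordism maps are homogeneous of degree $\Delta(W,\s)=\frac{1}{4}\left(c_1(\s)^2-2\chi(W)-3\sigma(W)\right)$, and for negative-definite $W$ the induced map on $HF^\infty$ is an isomorphism. The problem is that the one step where you say how these facts combine is stated backwards, and as written it fails. You assert that the bottom class of the tower in $HF^+(Y_0,\s|_{Y_0})$ is forced to map to a non-torsion class of grading at least $\di(Y_1,\s|_{Y_1})$. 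But that bottom class is $\pi_0(x)$ with $x\in HF^\infty(Y_0,\s|_{Y_0})$ of grading $\di(Y_0,\s|_{Y_0})$, and its image is $F^+_{W,\s}(\pi_0(x))=\pi_1(F^\infty_{W,\s}(x))$; since $\ker\pi_1$ is exactly the part of $HF^\infty(Y_1,\s|_{Y_1})\cong\Z[U,U^{-1}]$ lying in gradings below $\di(Y_1,\s|_{Y_1})$, this image vanishes whenever $\di(Y_0,\s|_{Y_0})+\Delta(W,\s)<\di(Y_1,\s|_{Y_1})$, so your premise is false in general. Worse, if the premise did hold, combining it with the grading shift would give $\di(Y_0,\s|_{Y_0})+\Delta(W,\s)\ge\di(Y_1,\s|_{Y_1})$, which is the \emph{reverse} of \eqref{fact:+def}.

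The isomorphism on $HF^\infty$ has to be used in the other direction. Take a nonzero homogeneous $y\in\operatorname{im}\pi_1$ of grading $\di(Y_1,\s|_{Y_1})$, write $y=\pi_1(\xi)$, and use surjectivity of $F^\infty_{W,\s}$ to write $\xi=F^\infty_{W,\s}(x)$ with $x$ homogeneous of grading $\di(Y_1,\s|_{Y_1})-\Delta(W,\s)$; then $y=F^+_{W,\s}(\pi_0(x))\neq 0$ forces $\pi_0(x)\neq 0$, hence $\di(Y_1,\s|_{Y_1})-\Delta(W,\s)\ge\di(Y_0,\s|_{Y_0})$, which is \eqref{fact:+def} after substituting $\chi(W)=\beta_2(W)$ and $\sigma(W)=-\beta_2(W)$. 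Note that these identities, and the standard statement that $F^\infty_{W,\s}$ is an isomorphism, use $b_1(W)=0$, a hypothesis the quoted statement suppresses but which holds for every cobordism used in this paper. The rest of your outline is fine: \eqref{fact:-def} does follow by turning the cobordism around (indeed, viewing $-W$ as a cobordism from $Y_1$ to $Y_0$ you do not even need the symmetry $d(-Y,\s)=-d(Y,\s)$, since $c_1(\s)^2$ changes sign with the orientation), and a rational homology cobordism is both positive and negative definite with $c_1(\s)^2=0=\beta_2(W)$, so the two inequalities together give \eqref{fact:rationalhomologycob}.
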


Note that $\Spc(Y)$ can be non-canonically put in bijective correspondence with $H^2(Y)$ and so an integer homology sphere $Y$ has a single $\Spc$-structure and hence a single $\di$-invariant, which we refer to as $\di(Y)$. 

\begin{proposition}[(Corollary 1.5 of \cite{OS03alt})] \label{prop:alternatingsurgery}
Let $K$ be an alternating knot. Then 
\[\di(S^3_1(K))= 2 \min \left\{0,  -\left\lceil\frac{-\sigma(K)}{4}\right\rceil\right\}
\]
\end{proposition}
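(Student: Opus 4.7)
The plan is to exploit the special structure of Heegaard Floer invariants for alternating knots. The main input, due to Ozsv\'ath and Szab\'o in \cite{OS03alt}, is that for an alternating knot $K$ the knot Floer homology $\widehat{HFK}(K)$ is \emph{thin}: it is supported on a single diagonal and is determined as a bigraded group by $\Delta_K(t)$ and $\sigma(K)$ alone. This is established by an inductive skein argument that tracks the alternating skein triple through the unoriented skein exact sequence for $\widehat{HFK}$, together with the existence of ``nice'' Heegaard diagrams arising from an alternating projection.

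Granting thinness, I would first use it to pin down the filtered structure of the full knot Floer complex $CFK^{\infty}(K)$. Thinness forces a staircase-type model whose shape depends only on $\Delta_K(t)$ and the overall grading shift, and for alternating knots this shift is governed by the identity $\tau(K) = -\sigma(K)/2$. From this model one reads off the large-surgery integer $V_0(K)$ by a direct inspection of the quotient subcomplex $A_0^+$: the tower generator of $HF^+$ appears in homological grading $-2 \max\bigl\{0, \lceil -\sigma(K)/4\rceil\bigr\}$, so
\[V_0(K) = \max\bigl\{0, \lceil -\sigma(K)/4\rceil\bigr\}.\]

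Next, I would invoke the identity $\di(S^3_{+1}(K)) = -2V_0(K)$, which is the specialization to $+1$-surgery of the Ozsv\'ath-Szab\'o integer surgery (mapping cone) formula; equivalently, it follows from the surgery exact triangle relating $S^3_{+1}(K)$ to $S^3_N(K)$ for $N \gg 0$, whose $d$-invariants are computed directly from $CFK^\infty$. Substituting the value of $V_0$ above yields
\[\di(S^3_1(K)) = -2\max\bigl\{0, \lceil -\sigma(K)/4\rceil\bigr\} = 2\min\bigl\{0, -\lceil -\sigma(K)/4\rceil\bigr\},\]
which is the claim. The main obstacle is really the thinness statement: once one has a closed-form description of $\widehat{HFK}$ for alternating knots in terms of $\Delta_K$ and $\sigma(K)$, the remainder of the argument is bookkeeping within the $V_0$ computation and the standard surgery formula. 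An alternative, more elementary but essentially equivalent route, is to compute $\di(S^3_1(K))$ indirectly via $\di$-invariants of the double branched cover of a suitable band sum, exploiting the analogous thinness result for $\widehat{HF}$ of branched double covers of alternating links proved in the same paper; the same arithmetic with $\sigma(K)$ emerges.
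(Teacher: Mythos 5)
This proposition is not proved in the paper at all; it is quoted verbatim as Corollary~1.5 of \cite{OS03alt}, so there is no in-paper argument to compare against, and what you have written is a (correct) reconstruction of a proof of the cited result. Your route is the modern repackaging rather than the original one: Ozsv\'ath--Szab\'o deduce the formula from their structure theorem for $HF^{+}$ of $\pm 1$-surgeries on alternating knots, which they obtain from thinness of $\widehat{HFK}$ together with the surgery exact triangles and bookkeeping with the torsion coefficients of $\Delta_K$; you instead pass through the filtered chain homotopy type, using the decomposition of a thin complex into a staircase (determined by $\tau(K)=-\sigma(K)/2$) plus boxes, reading off $V_0(K)=\max\{0,\lceil -\sigma(K)/4\rceil\}$ from $A_0^{+}$, and then applying $d(S^3_{1}(K))=-2V_0(K)$; the arithmetic $-2\max\{0,\lceil -\sigma/4\rceil\}=2\min\{0,-\lceil -\sigma/4\rceil\}$ closes the loop, and I checked it is consistent on torus knots $T_{2,2n+1}$. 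What your version buys is brevity, at the cost of inputs that postdate the cited paper (the staircase-plus-box classification of thin complexes, $\tau=-\sigma/2$ for alternating knots, and the $V_0$ form of the surgery formula), all of which are standard now, so the argument is sound. Two small corrections: thinness in \cite{OS03alt} is not proved by an unoriented skein exact sequence (that is the later quasi-alternating argument of Manolescu--Ozsv\'ath) but by analyzing Kauffman-state generators of a Heegaard diagram coming from the alternating projection, where $M-A$ is identified with $\sigma/2$; and your closing suggestion to recover $d(S^3_1(K))$ from $d$-invariants of double branched covers of a band sum does not obviously compute this quantity and should be dropped or substantiated.
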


Now observe that $K_k$, shown on the left of Figure~\ref{Fig:surgerydescriptionofk}, has a surgery description as on the right of Figure~\ref{Fig:surgerydescriptionofk}. Let $\eta$ be the red $(-\frac{1}{2k})$- framed curve and $\gamma$ be the blue $(+1)$-framed curve. Some isotoping gives us the surgery description on the left of Figure~\ref{Fig:dbcsurgeryk}. 
From this we obtain the surgery description for $Y_k:= \Sigma_2(K_k)$ on the right of Figure~\ref{Fig:dbcsurgeryk}. For the reader's convenience we say a few words justifying the surgery coefficients, where for any curve $\sigma$ in a diagram we use $\la_{\sigma}^{bb}$ to refer to the blackboard-framed longitude of $\sigma$. 
\begin{figure}[h]
\includegraphics[height=5cm]{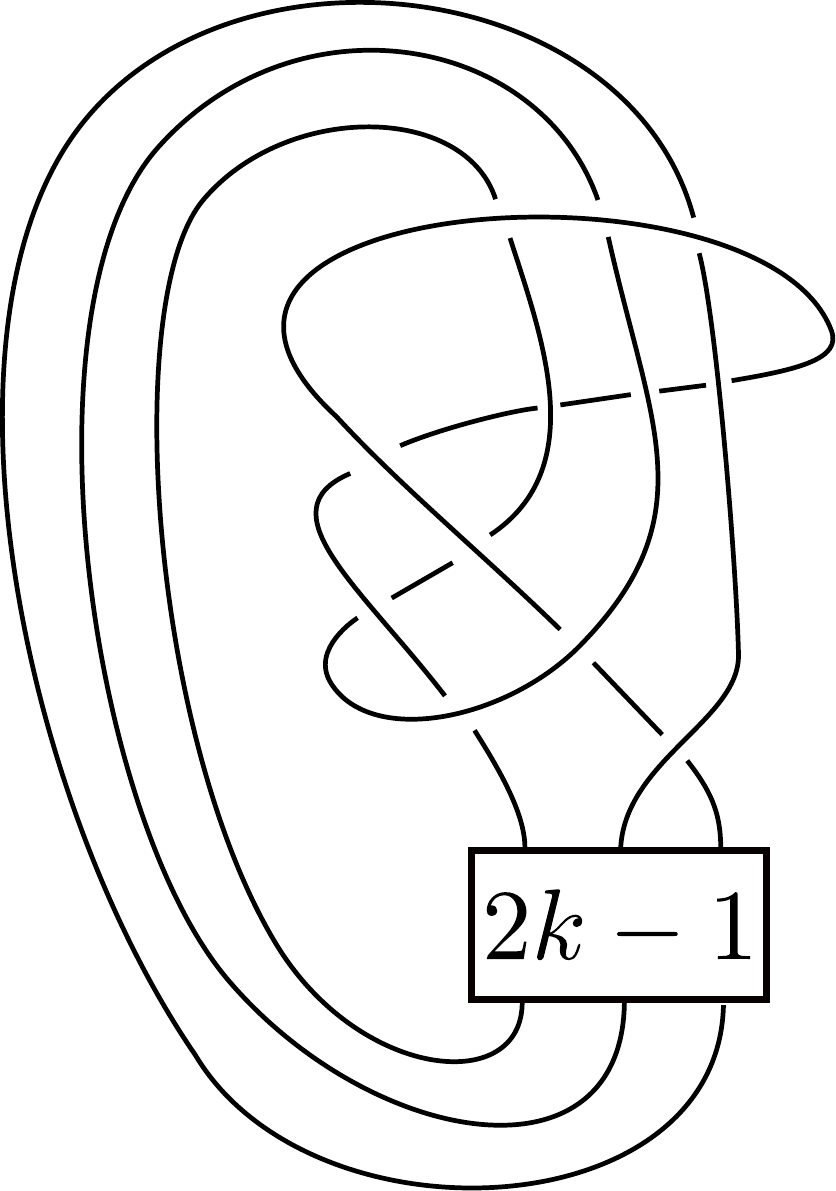} \quad \quad \includegraphics[height=5cm]{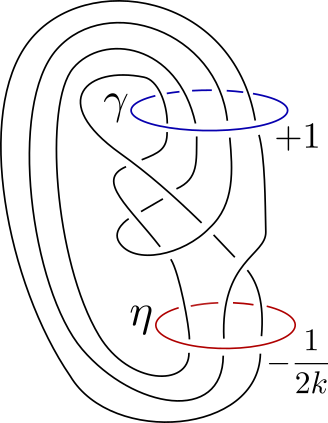}
\caption{$K_k$ (left)  has an unknotted surgery description (right).}
\label{Fig:surgerydescriptionofk}
\end{figure}
\begin{figure}[h]
\begin{tabular}{@{}c@{}}{\includegraphics[height=6cm]{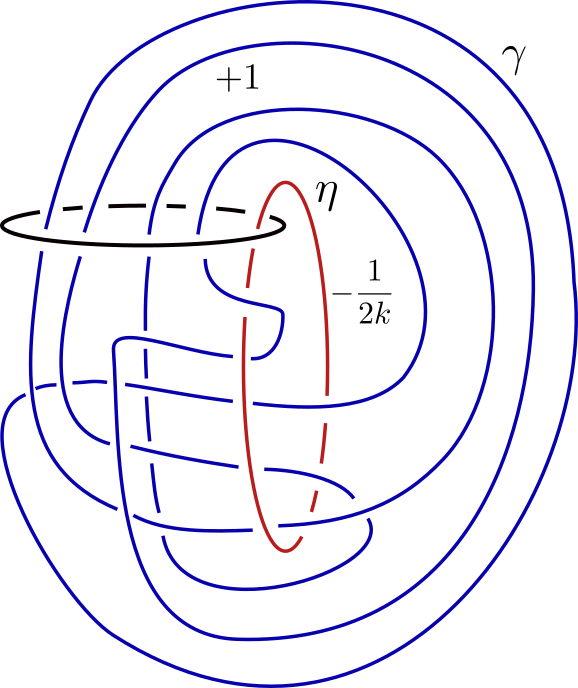}}\end{tabular}
\quad  \quad  \quad
\begin{tabular}{@{}c@{}}{\includegraphics[height=7cm]{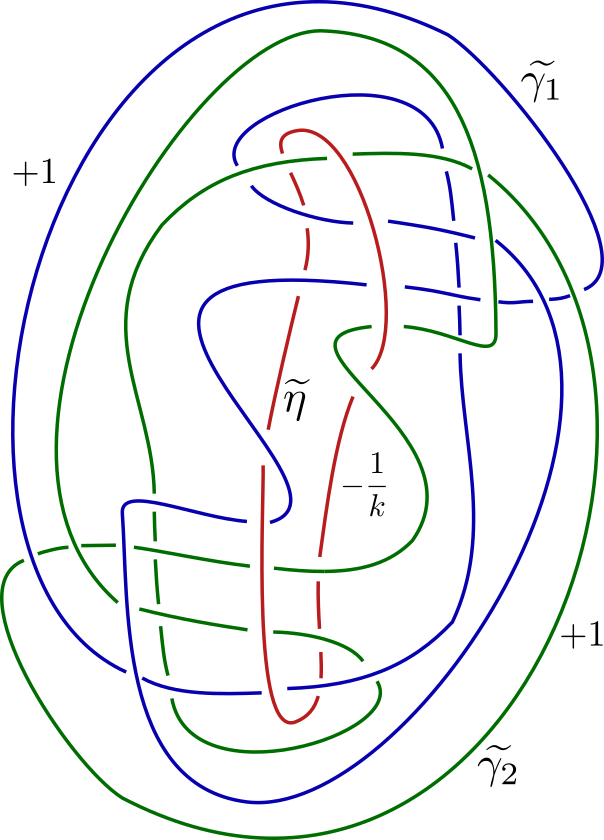}}\end{tabular}
\caption{$K_k$ now appears as the standard unknot (left), which gives a surgery description for $Y_k= \Sigma_2(K_k)$ (right).}
\label{Fig:dbcsurgeryk}
\end{figure}
We see on the left side of Figure~\ref{Fig:dbcsurgeryk} that $\eta$ has writhe 0, so we can write $\fr_{\e}= -\mu_{\e}+ 2k\la_{\e}^{bb}$. 
The preimage of $\mu_\e$ under the branched covering map is $\mu_{\widetilde{\e}}$ and the preimage of $2 \la_{\e}^{bb}$ is $\la_{\widetilde{\e}}^{bb}$. Since $\widetilde{\eta}$ also has writhe 0 the preimage of $\fr_{\e}$ is 
\begin{align*}
\fr_{\widetilde{\e}}=  -\mu_{\widetilde{\e}}+k\la_{\widetilde{\e}}^{bb}= -\mu_{\widetilde{\e}}+k\la_{\widetilde{\e}} 
\end{align*}
so we have a surgery coefficient of $-1/k$ for $\widetilde{\e}$. 
Similarly, on the left of Figure~\ref{Fig:dbcsurgeryk} we see that  $\g$ has writhe $+2$. So $\fr_{\g}= \mu_{\g} + \la_{\g}= -\mu_{\g}+ \la_{\g}^{bb}$.
Since $\gamma$ lifts to two closed curves $\widetilde{\g_1}$ and $\widetilde{\g_2}$, the preimages of $\mu_\g$ are $\mu_{\widetilde{\g_i}}$ and of $\la_\g^{bb}$ are $\la_{\widetilde{\g_i}}^{bb}$ for $i=1,2$. Since the $\widetilde{\g_i}$ both have writhe $+2$ we have
\begin{align*}
\fr_{\widetilde{\g_i}}= -\mu_{\widetilde{\g_i}}+ \la_{\widetilde{\g_i}}^{bb}= -\mu_{\widetilde{\g_i}}+ (\la_{\widetilde{\g_i}} + 2\mu_{\widetilde{\g_i}})= \mu_{\widetilde{\g_i}}+ \la_{\widetilde{\g_i}} \text{ for } i=1,2
\end{align*}
so we have surgery coefficients of $+1$ for the $\widetilde{\g_i}$.

\begin{proposition}\label{prop:monotonicity}
Let $K_k$ and $Y_k$ be as above. Then $Y_k$ is an integer homology sphere and hence has a single d-invariant $\di(Y_k)$ satisfying
\[\di(Y_{-k}) \leq \di(Y_0) \leq \di(Y_{k}) \text{ for } k \in \N.\] 
\end{proposition}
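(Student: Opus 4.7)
The plan has two parts. First, I will verify that $Y_k$ is an integer homology sphere by computing $H_1(Y_k)$ from the rational-surgery presentation on the right of Figure~\ref{Fig:dbcsurgeryk}: the standard $3 \times 3$ presentation matrix has determinant of the form $\alpha + k\beta$, where $\alpha$ and $\beta$ are polynomials in the pairwise linking numbers $\mathrm{lk}(\widetilde{\eta}, \widetilde{\gamma_i})$ and $\mathrm{lk}(\widetilde{\gamma_1}, \widetilde{\gamma_2})$. Inspection of Figure~\ref{Fig:dbcsurgeryk} verifies that these linking numbers make the determinant $\pm 1$ independent of $k$, so $Y_k$ is an integer homology sphere and carries a well-defined single d-invariant $\di(Y_k)$.

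For the monotonicity, I will construct a negative-definite cobordism $W_k : Y_k \to Y_{k+1}$ with $\beta_2(W_k) = 1$ for every $k \in \Z$. The key is to rewrite the $(-1/k)$-surgery on $\widetilde{\eta}$, via the inverse of slam-dunk, as the pair: $0$-surgery on $\widetilde{\eta}$ together with $k$-surgery on a small meridian $m$ of $\widetilde{\eta}$. Attach a single $(-1)$-framed 2-handle to $Y_k \times [0,1]$ along a small meridian $m'$ of $m$, placed in a tubular neighborhood of $m$ disjoint from the other surgery components so that $m'$ has trivial Seifert framing. Slam-dunking this new component changes $m$'s coefficient from $k$ to $k - 1/(-1) = k+1$, precisely recovering the integer-surgery description of $Y_{k+1}$. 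The cobordism $W_k$ has $H_2 \cong \Z$ generated by the new 2-handle's core with self-intersection $-1$, hence it is negative definite with $\beta_2 = 1$.

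Finally, apply Theorem~\ref{thm:dinvariantfacts}, inequality~(\ref{fact:+def}), to $W_k$: choose $\s \in \Spc(W_k)$ so that $c_1(\s)$ generates $H^2(W_k;\Z)$, giving $c_1(\s)^2 = -1$ and $(c_1(\s)^2 + \beta_2(W_k))/4 = 0$. Since both boundary components are integer homology spheres with unique $\Spc$-structures, the restriction conditions are automatic, and we obtain $\di(Y_{k+1}) \geq \di(Y_k)$. Iterating from $k = 0$ in both directions produces $\di(Y_{-k}) \leq \cdots \leq \di(Y_0) \leq \cdots \leq \di(Y_k)$. The main delicate step will be verifying that this cobordism construction correctly realizes the surgery transition uniformly for all $k \in \Z$, in particular in the degenerate case $k = 0$ where the $(-1/k)$-surgery coefficient is $\infty$ (the trivial surgery).
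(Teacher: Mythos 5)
Your argument is correct, but it takes a genuinely different route from the paper's. The paper fixes $k>0$ and builds a single negative-definite cobordism from $Y_0$ to $Y_k$ with $\beta_2=k$ by expanding $-1/k$ as a continued fraction, after which it has to identify the tridiagonal intersection form with the standard negative-definite one by an explicit change of basis and exhibit a characteristic vector of square $-k$; the inequality $\di(Y_{-k})\leq \di(Y_0)$ is then handled by a separate, mirror, positive-definite construction. You instead chain elementary cobordisms $Y_j\to Y_{j+1}$, each a single $(-1)$-framed $2$-handle attached along a meridian $m'$ of the auxiliary curve $m$, and this works uniformly for every $j\in\Z$: your worry about the degenerate case disappears once the surgery is rewritten as coefficient $0$ on $\widetilde{\eta}$ and $j$ on $m$, since no $\infty$-coefficient appears in that picture. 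The slam-dunk bookkeeping is right (in each dunk the surviving component carries an integer coefficient, as required), so the new boundary is indeed $Y_{j+1}$, and each step gives $\di(Y_j)\leq \di(Y_{j+1})$ with essentially no linear algebra; iterating recovers the paper's statement, and also the integer homology sphere claim is fine (with $\mathrm{lk}(\widetilde{\eta},\widetilde{\gamma_i})=0$ the determinant is in fact independent of $k$, which is how the paper phrases it).

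One step needs a more careful justification than ``$m'$ has trivial Seifert framing.'' The generator of $H_2(W_j)\cong\Z$ is not the handle core alone but the core capped off by a null-homology of $m'$ in $Y_j$, and its square is the handle framing measured against the framing induced by that null-homology, which in general differs from the Seifert framing of $m'$ in $S^3$. The discrepancy is $\ell^{T}A^{-1}\ell$, where $A$ is the linking matrix of the integer diagram ($+1,+1$ on the $\widetilde{\gamma_i}$, $0$ on $\widetilde{\eta}$, $j$ on $m$) and $\ell=(0,0,0,1)$ records the linking of $m'$ with the surgery curves. Since the $(\widetilde{\eta},m)$ block of $A$ is $\left[\begin{smallmatrix}0&1\\1&j\end{smallmatrix}\right]$, the relevant diagonal entry of $A^{-1}$ vanishes, so the square is $-1-0=-1$ for every $j$, as you claimed (equivalently, after the $j$-surgery on $m$ the curve $m'$ is isotopic to the core of the surgery solid torus and its null-homology framing in $Y_j$ agrees with the one you used). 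With that patched, $W_j$ is negative definite with $\beta_2=1$, the spin$^c$ structure with $c_1$ the generator has $c_1^2+\beta_2=0$, and inequality~(\ref{fact:+def}) of Theorem~\ref{thm:dinvariantfacts} gives the monotone chain exactly as you describe.
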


\begin{proof}
From Figure~\ref{Fig:dbcsurgeryk}, we see that $Y_0$ has a surgery description $\widetilde{\g_1} \cup \widetilde{\g_2}$ whose linking matrix is 
$\left[ 
\begin{array}{cc}
1&0 \\
0&1
\end{array}
\right]
$ and so $Y_0$ is a homology sphere. Also, $Y_k$ is obtained from $Y_0$ by $(-1/k)$-surgery on $\widetilde{\e}$, which has zero algebraic linking with $\g_1$ and $\g_2$, and so $Y_k$ is also a homology sphere. 
Now recall that $k>0$, and observe that $-1/k$
 has continued fraction expansion $[-1,-2, -2, \dots, -2]$, where there are $(k-1)$ occurrences of `$-2$'. Therefore we have a surgery diagram for $Y_k$ that differs from that of Figure~\ref{Fig:dbcsurgeryk} only in a small neighborhood of a point on $\widetilde{\e}$ as indicated in Figure~\ref{Fig:rationaltointeger} (see \cite{GS99} for justification). 
\begin{figure}[h]
\includegraphics[height=2.5cm]{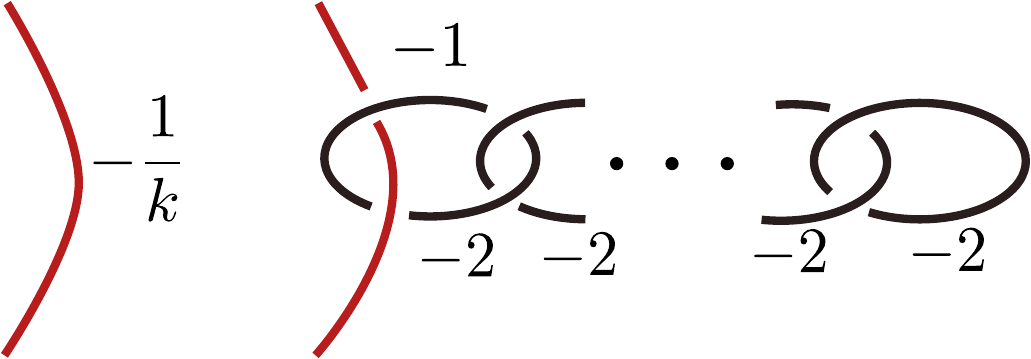}
\caption{An integer surgery diagram for $Y_k$.}
\label{Fig:rationaltointeger}
\end{figure}
Let $W_k$ be the 4-manifold obtained from $Y_0 \times I$ by attaching $k$ 2-handles, one with $(-1)$- framing and $(k-1)$ with $(-2)$-framing as indicated in Figure~\ref{Fig:rationaltointeger}. Notice that $W_k$ is a cobordism from $Y_0$ to $Y_k$ and by standard calculations
 \[ H^{j}(W_k) \cong H_j(W_k)
 \cong
\left\{\begin{array}{cc} 
 0 & j=1\\
 \Z^k & j=2\\
 \Z & j=3
 \end{array}
 \right. . 
\]

Let $S_i$ be the union of the core of the $i$th 2-handle of $W_k$ with a null-homology of the attaching circle of this handle in $Y_0$. Notice that $S_1, \dots, S_k$ generate $H_2(W_k)$. With respect to this basis the intersection form of $W_k$ is given by the $(k \times k)$ linking matrix of the attaching circles, which is
\[
Q_k = \left[
\begin{array}{cccccc}
-1 & -1 &   0 & 0&  \cdots &0 \\
-1 & -2 & -1 &0 &\cdots &0 \\
0 & -1 & -2 & -1 & \cdots & 0 \\
\vdots & \ddots & \ddots & \ddots & \ddots &\vdots\\
0 & \cdots &  -1 & -2 & -1 &0\\
0 & \cdots &  0 & -1 & -2 &-1\\
0 & \cdots &  0 &0& -1 & -2 
\end{array}
\right]. 
\]
It is straightforward to show inductively that given $v= \left[v_1 \, \dots \, v_k\right]$ we have 
\begin{align*}\label{eqn:intform}
v Q_k v^{T}=-(v_1 + v_2)^2 - (v_2+ v_3)^2 -\dotsc-(v_{k-1}+ v_k)^2 - v_k^2.
\end{align*}
This gives an isomorphism from $Q_k$ to the standard negative-definite intersection form on $\Z^k$. If $\s$ is any $\Spc$ structure on $W_k$, then $\s$ must restrict on $\partial W_k$ to the unique $\Spc$ structures on $Y_0$ and $Y_k$, and Theorem~\ref{thm:dinvariantfacts} tells us that 
\[ \di(Y_k) \geq \di(Y_0)+ \frac{ c_1(\s)^2 + \beta_2(W_k)}{4} 
.\]

The map $c_1: \Spc(W_k) \to H^2(W_k)$ has image consisting exactly of all the characteristic vectors. That is, letting $\cdot$ denote the intersection form on second cohomology, the characteristic classes are those elements $v$ with $v \cdot w\equiv w \cdot w \mod 2$ for all $w \in H^2(W_k)$. So in order to finish the proof it is enough to show that there is some characteristic vector $\xi \in H^2(W_k) \cong \Z^k$ with $\xi \cdot \xi + k \geq 0$. But this follows immediately from our isomorphism to the standard negative-definite intersection form, since with respect to the basis for $H_2(W_k)$ given by $\{v_1+v_2, v_2 + v_3, \dots, v_{k-1}+v_{k}, v_k\}$ the vector $\xi= [1, 1, \dots, 1, 1]$ is easily seen to be characteristic and satisfy $\xi \cdot \xi=-k$.

The argument to show $\di(Y_{-k}) \leq \di(Y_0)$ is almost identical: observe that $Y_{-k}$ is obtained from $Y_0$ by $1/k$ surgery on $\widetilde{\e}$,  note that $1/k$ has continued fraction expansion $[1, 2, \dotsc, 2]$ (with $k-1$ occurences of `2'), build a cobordism $W_k^+$ from $Y_0$ to $Y_k$ with $k$ 2-handles, note that $W_k^{+}$ is positive-definite, and apply Theorem~9.6 of \cite{OS03abs}. 
\end{proof}

In fact, the proof of Proposition~\ref{prop:monotonicity} extends to show the following. 
\begin{proposition}\label{prop:generalmonotonicity}
Let $L$ be a knot in $S^3$ and $\eta$ be an unknot which has linking number $\pm1$ with $L$. 
Suppose that $\Sigma_n(L)$ is an integer homology sphere for some $n \in \N$. 
For $k \in \Z$, let $\tau_{nk}(L,\eta)$ denote the $nk$-fold Rolfsen twist of $L$ along $\eta$. 
Then for any $k \in \N$,
\[ \di(\Sigma_n(\tau_{-nk}(L, \eta))) \leq \di(\Sigma_n(L)) \leq \di(\Sigma_n(\tau_{nk}(L, \eta))).\]
\end{proposition}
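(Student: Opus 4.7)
The plan is to mirror the proof of Proposition~\ref{prop:monotonicity} step by step, generalizing the diagrammatic setup there. Since $\eta$ is disjoint from $L$ and has linking number $\pm 1$ with $L$, its preimage in $\Sigma_n(L)$ consists of $\gcd(n, \operatorname{lk}(\eta, L)) = 1$ components, so $\eta$ lifts to a single knot $\widetilde{\eta}$. The $nk$-fold Rolfsen twist of $L$ along $\eta$ is realized in $S^3$ by adjoining $\eta$ as a $(-1/(nk))$-framed surgery component. Pulling this through the branched cover and repeating the writhe/framing analysis carried out for the curve $\eta$ in the explicit surgery diagram for $Y_k$ shows that $\Sigma_n(\tau_{nk}(L, \eta))$ is obtained from $\Sigma_n(L)$ by $(-1/k)$-Dehn surgery on $\widetilde{\eta}$: the meridian $\mu_\eta$ lifts to $\mu_{\widetilde{\eta}}$, while the Seifert longitude $\lambda_\eta$ lifts, modulo $\mu_{\widetilde{\eta}}$, to $n\lambda_{\widetilde{\eta}}$, which is exactly what transforms the coefficient $-1/(nk)$ into $-1/k$. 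Because $\widetilde{\eta}$ is null-homologous in the integer homology sphere $\Sigma_n(L)$ and $-1/k$ has numerator $\pm 1$, the resulting manifold is again an integer homology sphere, so the d-invariants in the conclusion are well defined.

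With this reduction in hand, the cobordism-theoretic portion of the proof of Proposition~\ref{prop:monotonicity} goes through verbatim. Writing $-1/k$ as the continued fraction $[-1, -2, \dots, -2]$ and attaching $k$ two-handles to $\Sigma_n(L) \times I$ along $\widetilde{\eta}$ together with the appropriate meridian pushoffs produces a cobordism $W_k$ from $\Sigma_n(L)$ to $\Sigma_n(\tau_{nk}(L, \eta))$. The intersection form on $H_2(W_k) \cong \Z^k$ is the matrix $Q_k$ written down earlier, the same change of basis identifies it with the standard negative-definite form on $\Z^k$, and the vector $\xi = [1, 1, \dots, 1]$ remains characteristic with $\xi \cdot \xi = -k$. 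Choosing $\s \in \Spc(W_k)$ with $c_1(\s) = \xi$ and applying inequality~\eqref{fact:+def} of Theorem~\ref{thm:dinvariantfacts} yields $\di(\Sigma_n(\tau_{nk}(L, \eta))) \geq \di(\Sigma_n(L))$.

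The reverse inequality is obtained identically, replacing $-1/k$ by $+1/k$ throughout: one uses the continued fraction expansion $[1, 2, \dots, 2]$ to build a positive-definite cobordism $W_k^+$ from $\Sigma_n(L)$ to $\Sigma_n(\tau_{-nk}(L, \eta))$, and appeals instead to inequality~\eqref{fact:-def}. The main step requiring care is the framing computation in the first paragraph: one must verify that the writhe/cover analysis used in Proposition~\ref{prop:monotonicity} depends only on $\eta$ lifting to a single component whose Seifert longitude pulls back with multiplicity $n$, and that the hypothesis $\operatorname{lk}(\eta, L) = \pm 1$ guarantees both of these properties. Once this local check is made, the rest of the argument copies over without any further modification.
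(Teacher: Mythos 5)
The cobordism half of your argument is fine and is exactly the paper's, but the entire content of this proposition beyond Proposition~\ref{prop:monotonicity} lies in the claim you dispatch in one sentence: that $\Sigma_n(\tau_{nk}(L,\eta))$ is obtained from $\Sigma_n(L)$ by $(-1/k)$-surgery on $\widetilde{\eta}$ \emph{measured against the Seifert framing of $\widetilde{\eta}$ in $\Sigma_n(L)$}. Your framing statement is garbled at precisely this point: the preimage of $\lambda_\eta$ is a single closed curve covering $\lambda_\eta$ $n$ times, and what you must show is that this curve is exactly $\lambda_{\widetilde{\eta}}$, with no meridional correction. Writing that $\lambda_\eta$ ``lifts, modulo $\mu_{\widetilde{\eta}}$, to $n\lambda_{\widetilde{\eta}}$'' concedes exactly the ambiguity that determines the answer: if the preimage of $\lambda_\eta$ were $\lambda_{\widetilde{\eta}}+c\,\mu_{\widetilde{\eta}}$, then the lift of the filling slope $-\mu_\eta+nk\lambda_\eta$ would be $(kc-1)\mu_{\widetilde{\eta}}+k\lambda_{\widetilde{\eta}}$, i.e.\ coefficient $c-1/k$, and your handle attachments would no longer produce a cobordism with intersection form $Q_k$. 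Moreover, the justification you offer --- ``repeating the writhe/framing analysis carried out for the explicit surgery diagram for $Y_k$'' --- does not transfer: that computation was performed in a diagram in which the branch locus is the standard unknot, and a general $L$ comes with no such diagram, so there is nothing to repeat. You flag this as ``the main step requiring care'' but never carry it out; that is a genuine gap, since it is the only new content of the proposition.

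This is exactly why the paper's proof introduces the auxiliary curves $\gamma_i$: one first unknots $L$ by $(\pm 1)$-surgeries on unknotted curves chosen to have zero algebraic linking with both $L$ and $\eta$. In the resulting surgery presentation $L$ is a standard unknot, so the branched cover and the lifted coefficient of $\eta$ can be computed by the same writhe bookkeeping as in Proposition~\ref{prop:monotonicity}, and the zero-linking condition forces $\widetilde{\eta}$ to link each lift of each $\gamma_i$ zero times, hence to be null-homologous in that diagram, so the $W_k$ argument applies verbatim. Alternatively, your intrinsic formulation can be completed without diagrams: since $\Sigma_n(L)$ is an integer homology sphere, $\lambda_{\widetilde{\eta}}$ is characterized among framing curves by having linking number $0$ with $\widetilde{\eta}$; taking a slightly shrunk copy $D$ of a disk bounded by the unknot $\eta$ (so $\partial D=\lambda_\eta$ and $D\cap\eta=\emptyset$), the preimage of $D$ is a $2$-chain in $\Sigma_n(L)$ with boundary the preimage of $\lambda_\eta$ and disjoint from $\widetilde{\eta}$, whence that preimage has linking number $0$ with $\widetilde{\eta}$ and equals $\lambda_{\widetilde{\eta}}$, and the lifted coefficient is $-1/k$ as claimed (the hypothesis $\operatorname{lk}(\eta,L)=\pm 1$ is what makes $\widetilde{\eta}$ and this boundary connected). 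Some such argument must be supplied; as written, the key step is asserted rather than proved.
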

\begin{proof}
 The argument goes as follows: unknot $L$ by $(\pm 1)$-surgery on unknotted curves $\{ \gamma_i\}$ which algebraically link each of $L$ and $\eta$ zero times to construct a surgery diagram for $\tau_{nk}(L,\eta)$. (One can always do this by choosing small surgery curves about unknotting crossings for $L$.) Adding the curve $\eta$ with framing $(-1/nk)$ gives a surgery diagram for $\tau_{nk}(L,\eta)$. Then follow the above algorithm to obtain a surgery diagram for $\Sigma_n(\tau_{nk}(L,\eta))$. It suffices to show that $\eta$ lifts to a single null-homologous curve with framing $(-1/k)$ in the surgery diagram for $\Sigma_n(L)$ in order to proceed exactly as in the proof of Proposition~\ref{prop:monotonicity}. 
 
One checks that $\eta$ lifts to a curve with framing $-1/k$ exactly as we lifted coefficients above. Since $\eta$ has linking number one with $L$, it has preimage a single curve $\widetilde{\eta}$ in our surgery diagram for $\Sigma_n(\tau_{nk}L)$. Since $\eta$ has zero linking with each of the $\gamma_i$ we have that $\widetilde{\eta}$ has zero linking with each of the $n$ lifts of each $\gamma_i$, hence $\widetilde{\eta}$ is null-homologous. 
\end{proof} 
Proposition~\ref{prop:generalmonotonicity} implies that the sequence $\{\di(\Sigma_2((\tau_{2k-1}J)(U)))\}$ is nondecreasing in $k$.  We also observe that Proposition~\ref{prop:generalmonotonicity} is not hard to extend to the case when $\Sigma_n(K)$ is not an integer homology sphere, but since it is not needed for our purposes we do not do so here.

\begin{example}\label{Example:-1twist}
Observe that when $k=0$, $\widetilde{\eta}$ has surgery coefficient $\infty$, and so we can delete it from the diagram of Figure~\ref{Fig:dbcsurgeryk} and with a bit of isotoping obtain the left side of Figure~\ref{Fig:-1twist}. Blowing down $\widetilde{\g_2}$ gives the surgery diagram on the right side of Figure~\ref{Fig:-1twist}. So $Y_0= S^3_1(5_2)$ and Corollary 1.5 of \cite{OS03alt} tells us that 
\begin{align*}\label{Equation:dinvariantof-1}
\di\left(\Sigma((\tau_{-1}P)(U))\right)= \di(S^3_1(5_2))= 2 \min \left\{0,  -\left\lceil\frac{-\sigma(5_2)}{4}\right\rceil\right\}
= 2 \min \left\{0,  -\left\lceil\frac{2}{4}\right\rceil\right\}=-2.
\end{align*}
\begin{figure}[h]
\includegraphics[height=4cm]{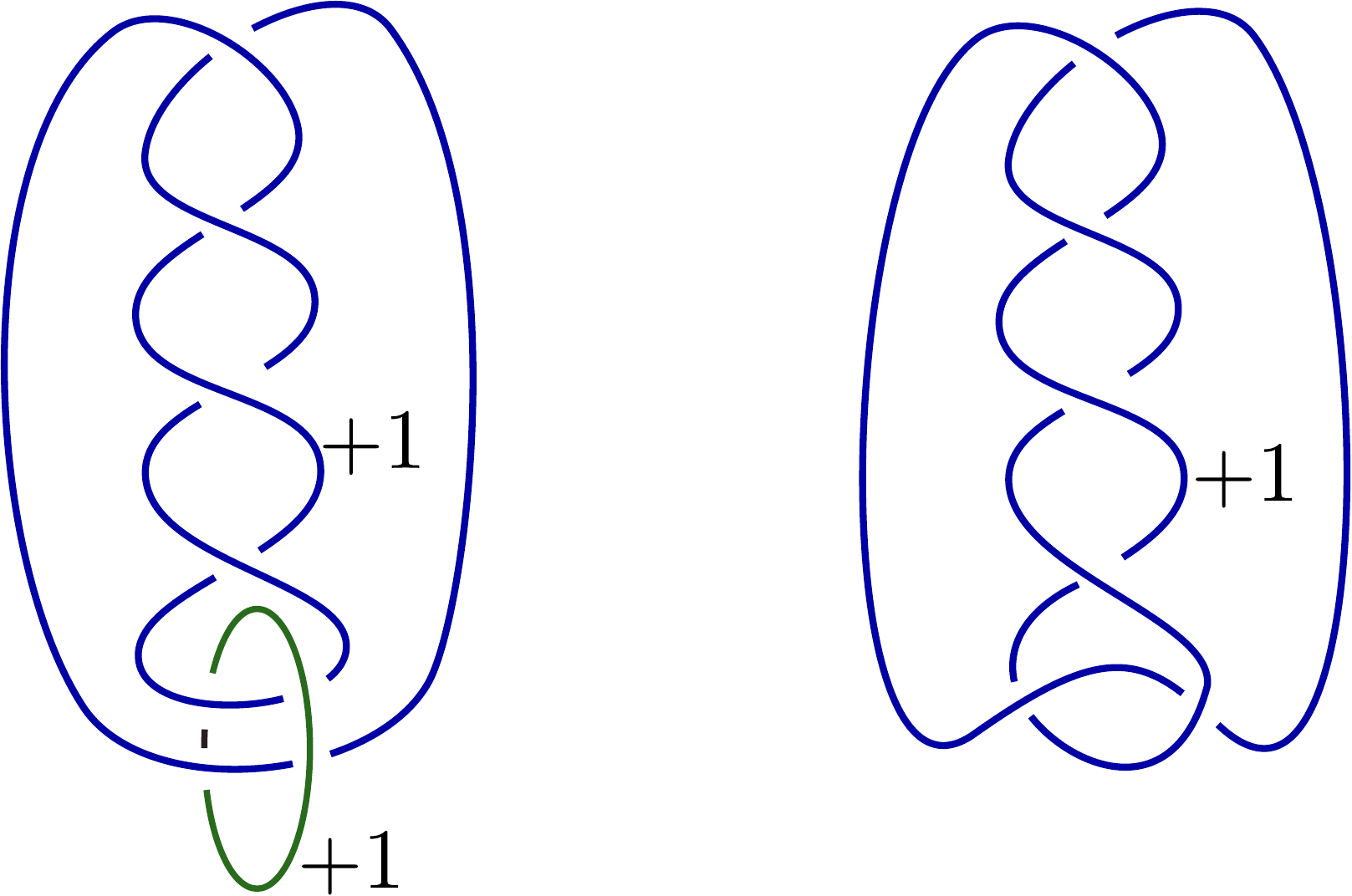}
\caption{Surgery diagrams for $Y_0= \Sigma_2((\tau_{-1}P)(U))$}
\label{Fig:-1twist}
\end{figure}
\end{example}

\begin{example}\label{Example:+1twist}
We use a new surgery diagram for $K_1$ to compute $\di(Y_1)$. 
The left side of Figure \ref{Fig:1twist} depicts $K_1$ in the standard $S^3$ and the middle gives a surgery diagram for $S^3$ in which $K_1$ appears unknotted. An isotopy takes $K$ to a curve with no self-crossings as on the right of Figure~\ref{Fig:1twist}. 
\begin{figure}[h]

\begin{tabular}{@{}c@{}}{\includegraphics[height=4.5cm]{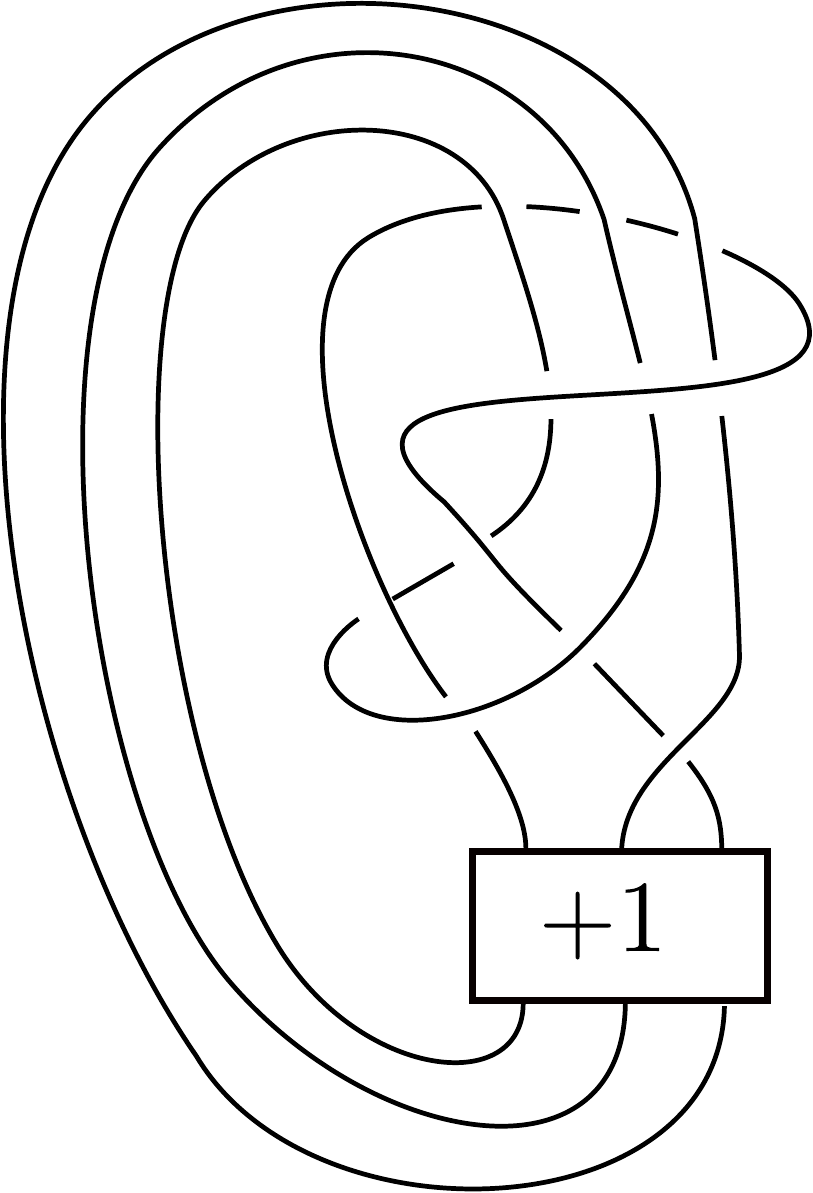}}\end{tabular}
\quad \quad
\begin{tabular}{@{}c@{}}{\includegraphics[height=4.5cm]{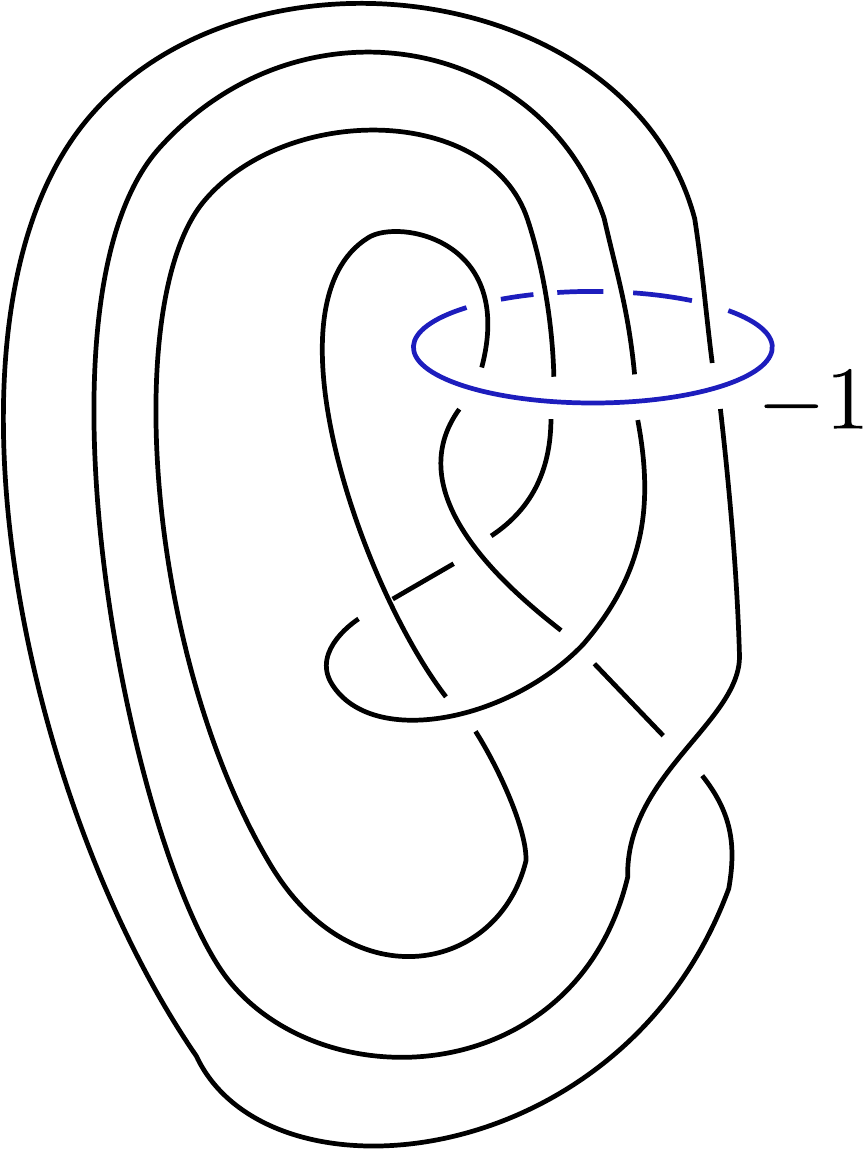}}\end{tabular}
\quad \quad
\begin{tabular}{@{}c@{}}{\includegraphics[height=3.75cm]{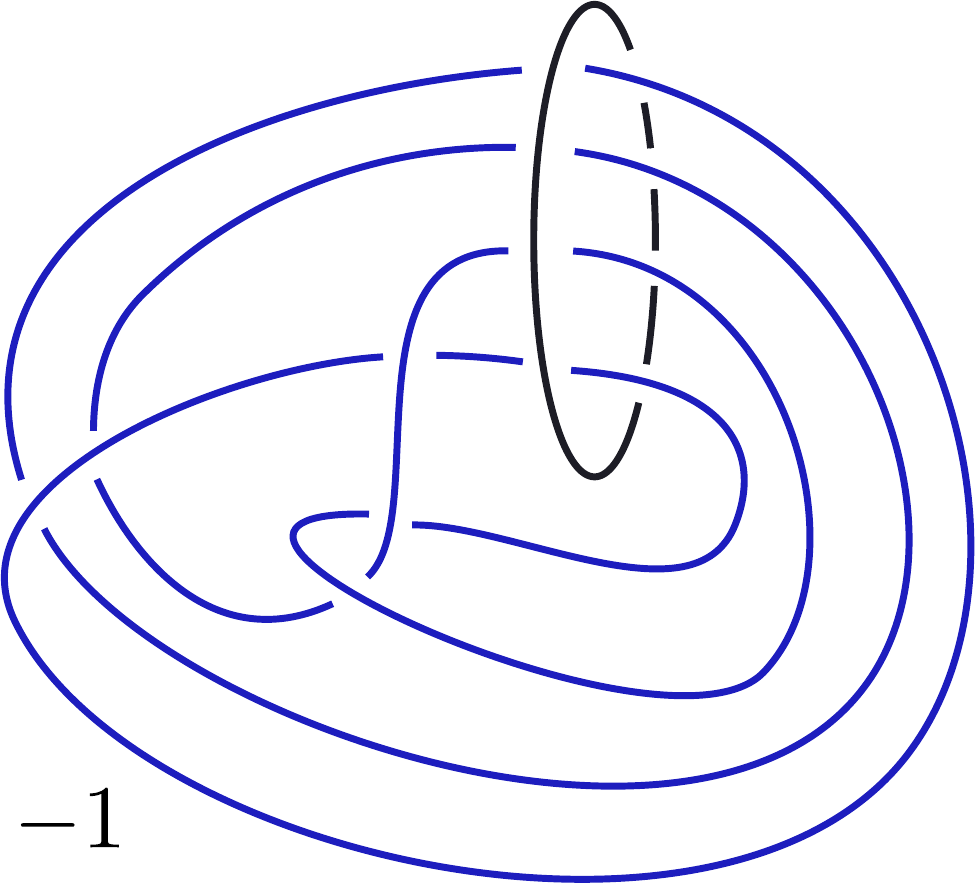}}\end{tabular}
\caption{The knot $(\tau_1 P)(U)$ (left) and two surgery diagrams for it (center, right).}
\label{Fig:1twist}
\end{figure}
We therefore have a surgery diagram for $Y_1$ as on the left of Figure~\ref{Fig:dbc+1}, which after some isotopy appears as in the center of Figure~\ref{Fig:dbc+1}.
Now observe that the 4-manifold obtained by adding one 0-framed 2-handle to $S^1\times B^3$ as on the right of Figure~\ref{Fig:dbc+1} has $\partial Z= \Sigma(K_1)$. 
Since $Z$ consists of a 0-handle and an algebraically canceling 1- and 2-handle pair, $Z$ is an integer homology ball. It follows from Theorem~1.2 of \cite{OS03abs} that $\di(\Sigma(K_1))=0$.
\begin{figure}[h]
\begin{tabular}{@{}c@{}}{\includegraphics[height=3cm]{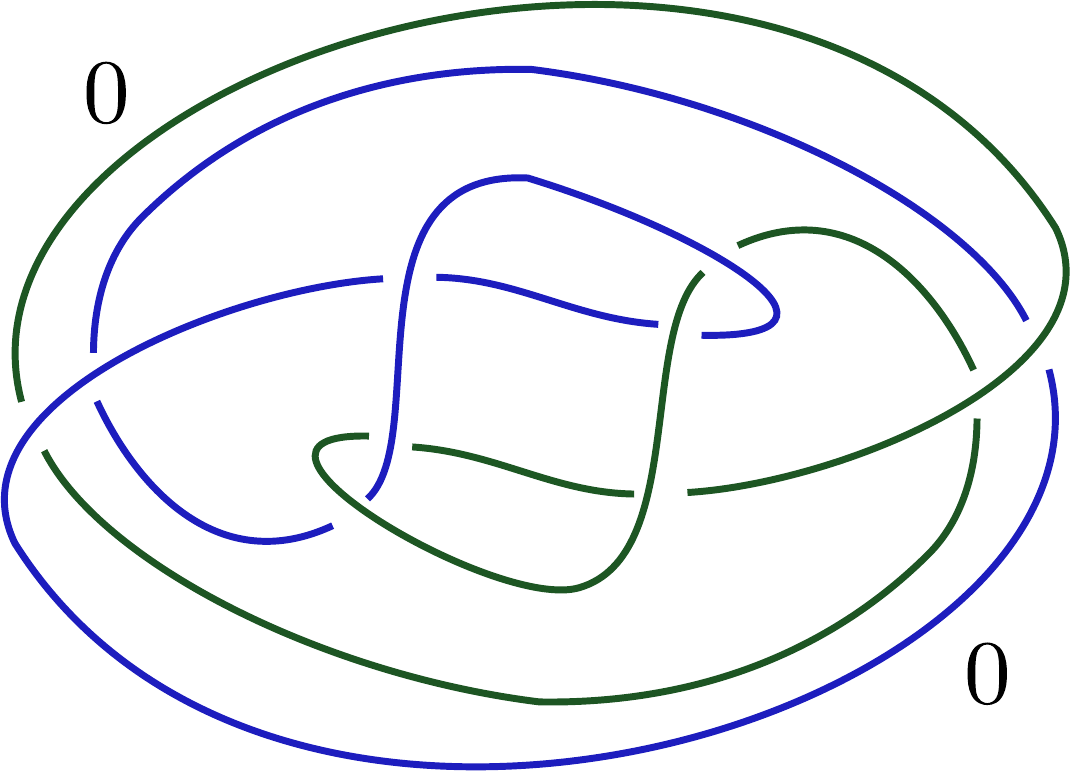}}\end{tabular}
\quad \quad 
\begin{tabular}{@{}c@{}}{\includegraphics[height=2.5cm]{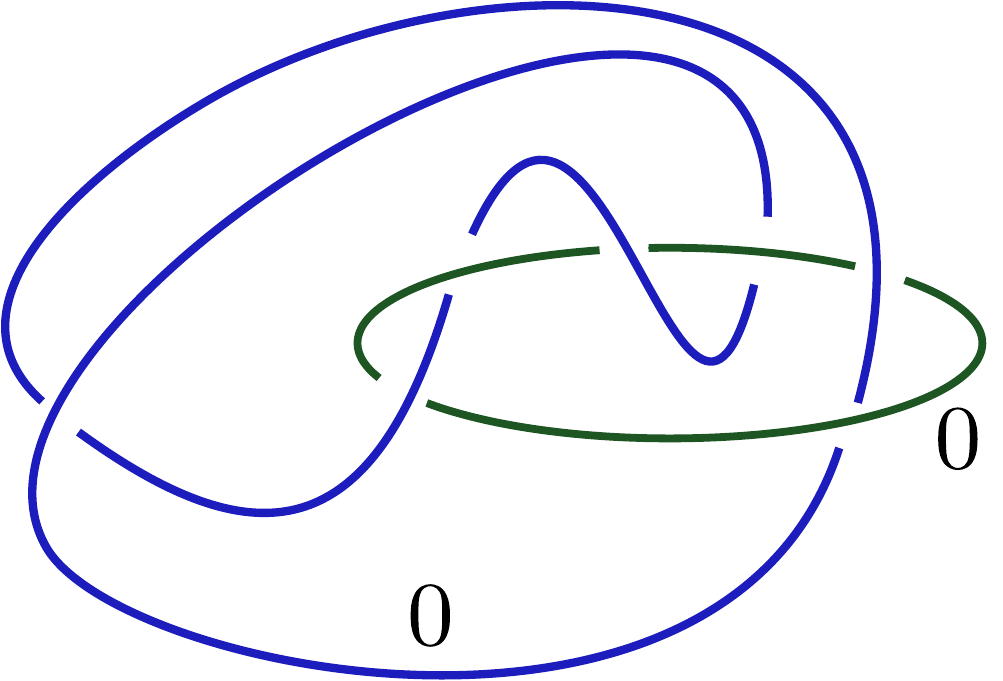}}\end{tabular}
\quad \quad 
\begin{tabular}{@{}c@{}}{\includegraphics[height=2.25cm]{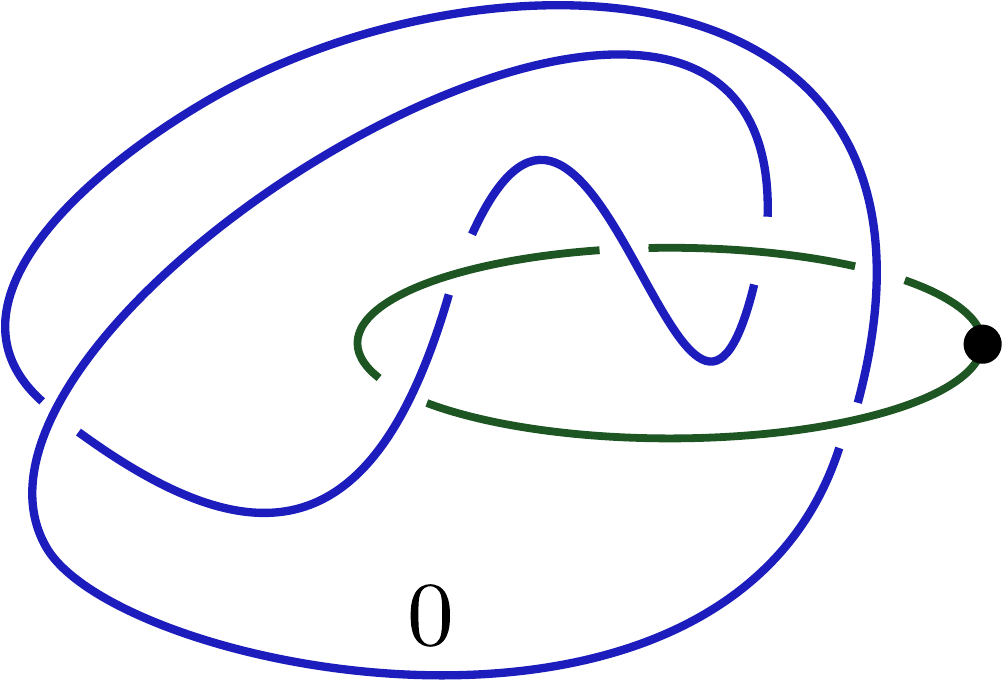}}\end{tabular}
\caption{Surgery diagrams for $\Sigma_2(K_1)$ (left, center) and a Kirby diagram for $Z^4$ with $\partial Z= \Sigma_2(K_1)$ (right).}
\label{Fig:dbc+1}
\end{figure}
\end{example}

\begin{proof}[(Theorem~\ref{thm:MainTheoremA})]
For any $k \in \N$, as before let $K_k= (\tau_{2k-1}J)(U)$. We now let $K_k'=(\tau_{-2k-3}J)(U)$.  
We will see in Section~\ref{section:interestingfamily} that when $k \neq k'$ an Alexander polynomial computation shows that  $K_k$ and $K_{k'}$ are distinct knots.

By Example~\ref{Ex:J}, Proposition~\ref{Prop:twist}, and Theorem~\ref{Thm:diffeotraces}, we have that $K_k$ and $K_k'$ have diffeomorphic 0-traces for any $k \in \N$. 
However, by the propositions and examples of this section, we have that 
\[ \di(\Sigma_2(K_k')) \leq \di(\Sigma_2(K_0))=-2 <0= \di(\Sigma_2(K_1)) \leq \di(\Sigma_2(K_k)), k \in \N.\] 
Therefore by Theorem~1.2 of \cite{OS03abs}, $\Sigma_2(K_k')$ and $\Sigma_2(K_k)$ are not smoothly rationally homology cobordant.  Thus $K_k'$ and $K_k$ are not smoothly concordant. 
\end{proof}

\begin{proof}[(Theorem \ref{thm:MainTheoremC})]
Observe that if a pattern $P$ acts by connected sum it must act by connected sum with $P(U)$ and so $P(U)$ and $P^*(U)$ must be concordant, since
\[U \sim P(P^{-1}(U)) \sim P(U) \# P^{-1}(U) \sim P(U) \#  \overline{P^*}(U).\]
But the examples in the proof of Theorem~\ref{thm:MainTheoremA} give us infinitely many dualizable, hence invertible, patterns for which this is not the case. 
\end{proof}

\begin{remark}\label{remark:puslice}
In fact, for any dualizable pattern $P$ with $P(U) \not \sim P^*(U)$, we can construct a dualizable pattern $Q$ such that $Q(U)$ is slice but the action of $Q$ on the smooth concordance group is nontrivial as follows. Let $-P_{\#}$ be the geometric winding number one pattern with $-P_{\#}(U)=-P(U)$ and let $Q= -P_{\#}\circ P$. Then $Q$ is a dualizable pattern with $Q(U) = P(U) \# - P(U) \sim U$. However, 
\[ Q(-P^*(U)) = P(-P^*(U)) \# -P(U) \sim - P(U) \not \sim -P^*(U).\] Note that this stands in stark contrast to the topological setting, in which it is still unknown whether there exist \emph{any} algebraic winding number one patterns $S$ with $S(U)$ topologically slice and yet with $S(K)$ not always topologically concordant to $K$. 

Applying Corollary \ref{Cor:sumtrace} to such a pattern $Q$ we get that for any non-trivial knot $K$ there exists $K'= K \# Q^*(U)$ with $K\sim K'$ and  a knot $K''= Q(K) \not \simeq K'$ such that $X_0(K')\cong X_0(K'')$.  Note that the fact that $K \# Q^*(U) \not \simeq Q(K)$ follows from the uniqueness of satellite descriptions of knots.  This leads us to the following question. 
\end{remark}

\begin{question}
For which concordance classes $[K]$ does there exist $[K']\neq [K]$ with representatives $K_0$ and $K_0'$ with diffeomorphic 0-traces?
\end{question}
Theorem \ref{thm:MainTheoremA} tells us that such concordance classes exist, and Corollary \ref{Cor:slicetrace} tells us that $[K]$ cannot be trivial. Further, as noted in the introduction, for a given $[K]$ there are many restrictions on the possible choices of $[K']$. One might also ask how many distinct $[K']$ are possible: can one find infinitely many concordance classes of knots all of which have representatives sharing a 0-trace?\\

\section{Relationship with annulus twisting.}\label{section:annulustwisting}

We begin by recalling a construction of certain knots in $S^3$. On the left side of Figure~\ref{Fig:simpleannulus}, ignoring for a moment the blue curve and the 0-surgery instruction, orient the black link in $S^3$ so that it represents the oriented boundary of the annulus it bounds in the figure. Then construct a knot $K$ by banding the link to itself in any way which preserves these orientations (the bold arcs represent attaching regions for the band). Knots arising from this construction are said to admit \emph{annulus presentations}. 
\begin{figure}[h]
\includegraphics[height=4.5cm]{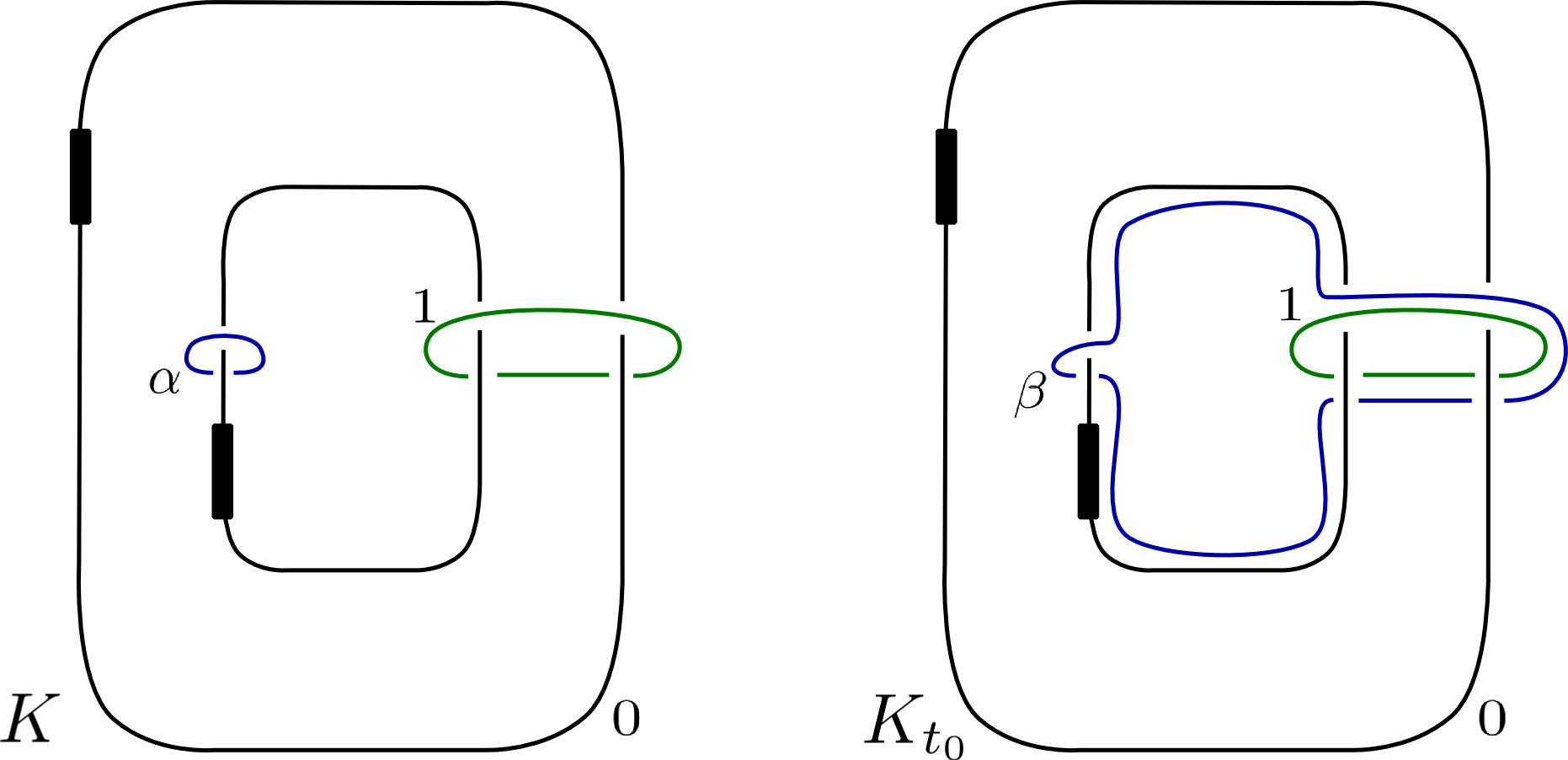}
\caption{Schematic diagrams of the manifolds obtained from 0-surgery on $K$ (left) and $K_{t_0}$ (right).}
\label{Fig:simpleannulus}
\end{figure}
We recall two relevant results. 
\begin{theorem}[(\cite{Oso06})]
If $K$ admits an annulus presentation then  for each $t \in \Z$ there is a knot $K_t$ also admitting an annulus presentation and such that $S^3_0(K)\cong S^3_0(K_t)$. 
\end{theorem}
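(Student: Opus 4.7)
The plan is to construct $K_t$ as an explicit $t$-fold twist of $K$ along the annulus $A$ furnished by the annulus presentation, and to verify $S^3_0(K)\cong S^3_0(K_t)$ by Kirby calculus on the surgery diagrams of Figure~\ref{Fig:simpleannulus}.

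First I would set up the annulus twist precisely. The annulus presentation supplies an annulus $A\subset S^3$ with unknotted core $c$, whose boundary $\partial A=c_1\sqcup c_2$ is a pair of antiparallel pushoffs of $c$, together with a band $b$ joining $c_1$ to $c_2$ so that $K=\partial(A\cup b)$; the antiparallel condition on $c_1,c_2$ forces $\operatorname{lk}(K,c)=0$. I would then define $K_t$ by inserting $t$ full twists into the band $b$ along the core $c$ (the annulus twist operation), and observe that the same annulus $A$ together with the twisted band $b_t$ automatically furnishes an annulus presentation of $K_t$ in the sense of the theorem.

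Next I would prove $S^3_0(K)\cong S^3_0(K_t)$ by showing that the two schematic diagrams in Figure~\ref{Fig:simpleannulus} describe diffeomorphic $3$-manifolds. Following the figure, I would rewrite the $0$-surgery diagram for $K$ using the annulus structure: replace the drawing of $K$ by $c_1\sqcup c_2$ (each $0$-framed) together with an auxiliary $0$-framed blue unknot that records the band attachment, so that handle slides of $c_1,c_2$ over the blue curve recover the original $0$-surgery on $K$. The analogous rewriting produces an alternative diagram for $S^3_0(K_t)$. These two alternative diagrams differ only by a local $t$-fold twisting of the band through the auxiliary blue component, and because $\operatorname{lk}(K,c)=0$ the $0$-framings survive the requisite handle slides; a direct sequence of slides of $c_1,c_2$ over the auxiliary $0$-framed component realizes the annulus twist as a $3$-manifold-preserving Kirby move, establishing the diffeomorphism.

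The main obstacle is the local verification, inside a neighborhood of $A$, that a single round of slides of $c_1,c_2$ over the auxiliary blue $0$-framed unknot implements exactly one Dehn twist along $A$ while preserving all the $0$-framings; this is essentially a Rolfsen-twist computation specific to the neighborhood of $A$. Once the single-twist case is in hand, iterating $t$ times gives the theorem for all $t\in\Z$.
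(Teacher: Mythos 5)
This theorem is quoted by the paper from \cite{Oso06} without proof, so your outline has to be measured against Osoinach's argument, and as it stands it has a genuine gap at exactly the point that carries the content. First, the setup is off: the $t$-fold annulus twist is, by definition, $(+1/t)$- and $(-1/t)$-surgery on the two components $c_1,c_2$ of $\partial A$ with the framings induced by $A$ (equivalently, cutting along $A$ and regluing by $t$ Dehn twists along the core); its effect on $K$ is not accurately described as ``inserting $t$ full twists into the band along the core $c$.'' Moreover your assumptions are too restrictive and one of them is false: an annulus presentation does not require an untwisted annulus with unknotted core, and antiparallelism of $c_1,c_2$ does \emph{not} force $\operatorname{lk}(K,c)=0$, because the band may (and in every interesting example does) pass through the annulus. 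The correct reason the $0$-framing survives is that $\operatorname{lk}(K,c_1)=\operatorname{lk}(K,c_2)$ --- $K$ can be pushed off the once-punctured torus $A\cup b$, across which $c_1$ and $c_2$ are homologous --- so the framing changes from the two opposite twists cancel; it is not that each linking number vanishes.

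Second, the step you yourself defer as ``the main obstacle'' is the whole theorem: the claim that $S^3_0(K)$ can be re-presented as surgery on $c_1\sqcup c_2\sqcup(\text{a $0$-framed blue unknot})$, all $0$-framed, with handle slides realizing the twist, is asserted but never justified, and with a twisted annulus ($\operatorname{lk}(c_1,c_2)\neq 0$) such an all-$0$-framed three-component description can already fail on first homology, since $H_1(S^3_0(K))\cong\Z$. The actual mechanism is different and explains why the surgery coefficient $0$ is essential: write $S^3_0(K_t)$ as surgery on $K\cup c_1\cup c_2$ with coefficients $(0,+1/t,-1/t)$ (framings from $A$); then observe that in $S^3_0(K)$ the once-punctured torus $A\cup b$ caps off with a meridian disk of the surgery solid torus to a closed embedded torus on which $c_1$ and $c_2$ are parallel curves carrying the surface framing. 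Opposite-coefficient, surface-framed surgeries on parallel curves of an embedded torus amount to cutting along that torus and regluing by the $t$-th power of the Dehn twist along the core composed with its inverse, which is isotopic to the identity, so the $3$-manifold is unchanged. That closed torus --- hence the cancellation --- is what your Rolfsen-twist sketch is missing. (The easy parts of your outline are fine: $K_t$ again admits an annulus presentation with the same annulus and the twisted band, and the general case follows by iterating a single twist. For the Kirby-calculus route you gesture at, which proves the stronger statement $X_0(K)\cong X_0(K_t)$, see \cite{AJOT13}.)
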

In fact, Osoinach's result can be extended to give the following.
\begin{theorem}[(\cite{AJOT13})]
If $K$ admits an annulus presentation then Osoinach's homeomorphism $\phi:S^3_0(K)\to S^3_0(K_t)$ extends to a diffeomorphism $\Phi:X_0(K)\to X_0(K_t)$
\end{theorem}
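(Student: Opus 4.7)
The plan is to mimic the strategy used to prove Theorem~\ref{Thm:diffeotraces}, namely to apply Akbulut's extension criterion (Lemma~\ref{Lem:akbulutextension}) to Osoinach's homeomorphism $\phi:S^3_0(K)\to S^3_0(K_t)$. So we need to produce a knot $\kappa\subset\partial X_0(K)=S^3_0(K)$ with all three of the following: (i) $\kappa$ bounds a smooth disk $D_\kappa$ in $X_0(K)$ and $\phi(\kappa)$ bounds a smooth disk $D_{\phi(\kappa)}$ in $X_0(K_t)$; (ii) the framings induced on $\kappa$ by $D_\kappa$ and by the pullback of $D_{\phi(\kappa)}$ agree; and (iii) $X_0(K)\smallsetminus\nu(D_\kappa)\cong X_0(K_t)\smallsetminus\nu(D_{\phi(\kappa)})\cong B^4$.

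The natural choice for $\kappa$ is the blue curve in Figure~\ref{Fig:simpleannulus}: it is an unknot in $S^3$ which encircles the band of the annulus presentation, and Osoinach's homeomorphism is defined precisely by performing $1/t$-surgery on this curve before and after the band move. First I would verify (i) by the same argument used for the meridional curve in the proof of Theorem~\ref{Thm:diffeotraces}: push the obvious disk in $S^3$ bounded by the blue unknot slightly into the $B^4$ of $X_0(K)$, and do the analogous construction in $X_0(K_t)$ for the image blue curve (which, by construction of $K_t$ via annulus twisting, sits in $\partial X_0(K_t)$ as an unknot in a natural $S^3$ slice). For (ii), the blackboard framings of both blue unknots in their respective $S^3$'s are $0$, and Osoinach's homeomorphism $\phi$ is built by surgery on $\kappa$ itself, so $\phi$ sends the $0$-framing on $\kappa$ to the $0$-framing on $\phi(\kappa)$; this is a direct check in the surgery diagram.

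The main work, and the main obstacle, is establishing (iii). Carving $\nu(D_\kappa)$ from $X_0(K)$ corresponds in Kirby calculus to erasing the meridian disk of the $2$-handle attached along $K$ and instead recording a dotted $1$-handle along $\kappa$; thus $X_0(K)\smallsetminus\nu(D_\kappa)$ has a handle diagram consisting of a dotted blue unknot together with a $0$-framed $K$. Because $\kappa$ encircles the band in the annulus presentation, sliding $K$ across the dotted $1$-handle undoes the band, splitting $K$ into its original unknotted two-component annulus boundary, which then algebraically cancels against the $1$-handle, leaving $B^4$. The analogous argument works for $X_0(K_t)\smallsetminus\nu(D_{\phi(\kappa)})$: the carving produces the same dotted blue curve, and the $0$-framed $K_t$ obtained by $t$ full twists of the band across the blue disk unwinds once the blue disk is carved out, again reducing to $B^4$ after the same band-undoing/cancellation sequence. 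This is exactly parallel to the handle-slide argument in Figure~\ref{Fig:traceshandle}--\ref{Fig:ntraceslide}, with the $1$-handle coming from the blue curve instead of from the dualizability of the pattern.

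Once (i), (ii), (iii) are in hand, Lemma~\ref{Lem:akbulutextension} gives a diffeomorphism $\Phi:X_0(K)\to X_0(K_t)$ restricting to $\phi$ on the boundary. The hard step is (iii): one must exhibit an explicit sequence of handle slides and cancellations that works uniformly in the annulus presentation data and in the twisting parameter $t$, which requires a careful diagrammatic bookkeeping of how the band interacts with the blue disk before and after the $t$-fold annulus twist.
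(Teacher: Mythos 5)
This statement is quoted from \cite{AJOT13}; the paper gives no proof of its own, so there is nothing internal to compare against. Your strategy---apply Lemma~\ref{Lem:akbulutextension} to Osoinach's homeomorphism, in parallel with the proof of Theorem~\ref{Thm:diffeotraces}---is a reasonable one, and your condition (i) is fine (a pushed-in Seifert disk of an unknotted curve in $S^3$ works no matter how it links $K$). But the proof as written has a genuine gap exactly where you locate the difficulty, at condition (iii), and the justification you sketch there is not a legitimate handle-calculus argument. After carving the pushed-in disk bounded by the blue curve, the diagram is a single dotted unknot together with the single $0$-framed $2$-handle along $K$ (resp.\ $K_t$). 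There is no other $2$-handle to slide over; an isotopy or slide can never ``split'' the attaching circle of one $2$-handle into a two-component link, so ``undoing the band'' in the way you describe is not a move; and cancelling a $1$-/$2$-handle pair requires the attaching circle to meet the belt sphere \emph{geometrically} once, not algebraically. What actually has to be proved is that in the $S^1\times S^2$ obtained by $0$-surgery on the blue curve, the knot $K$ (resp.\ $K_t$) becomes isotopic to $S^1\times\{pt\}$; this is precisely the dualizability-type verification carried out diagrammatically in Figure~\ref{Fig:Pisdual} (one slide of $K_{t_0}$ over the $0$-framed blue curve gives a core) and used in Proposition~\ref{Prop:annulusdual}, and note that even there the corresponding statement on the $K$-side is obtained indirectly (via $K\simeq P^*(U)$ and the knot complement theorem), not by an obvious picture. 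Your write-up asserts these facts rather than establishing them, so the crux of the proof is missing.

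Two further points need repair. First, the identification of $\phi(\kappa)$ with the blue curve on the $K_t$ side is only known for a single twist $t_0\in\{-1,1\}$ (this is the ``homeomorphism of pairs'' statement in Section~\ref{section:annulustwisting}); for general $t$ you must either iterate, using that $K_{t_0}$ again admits an annulus presentation, or argue directly---your proposal does neither. Second, your description of Osoinach's map as ``$1/t$-surgery on the blue curve'' is inaccurate: the homeomorphism comes from twisting along the annulus, i.e.\ from $\mp 1/t$-surgeries on the two boundary curves of $A$ with their annulus framings, so the framing verification in condition (2) of Lemma~\ref{Lem:akbulutextension} cannot be dismissed as a ``direct check''; it requires knowing $\phi$ explicitly near $\kappa$, which again is only available through the $t_0=\pm1$ pairs statement.
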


Inspection of Osoinach's proof shows that there is some $t_0\in \{-1,1\}$ such that the homeomorphism $\phi:S^3_0(K)\to S^3_0(K_{t_0})$ induces a homeomorphism of pairs $\phi:(S^3_0(K),\alpha)\to (S^3_0(K_{t_0}),\beta)$ where $\alpha$ and $\beta$ are the knots in the surgery manifolds given in Figure \ref{Fig:simpleannulus} in blue. (cf. \cite{Oso06}, \cite{AJOT13}). 

The following exhibits a relationship between knots admitting annulus presentations and knots arising from dualizable patterns. 

\begin{proposition}
Let $K$ be a knot admitting an annulus presentation and $K_{t_0}$ be obtained from $K$ as above. Then there is a dualizable pattern $P$ such that $P(U) \simeq K_{t_0}$ and $P^*(U) \simeq K$. 
\label{Prop:annulusdual}
\end{proposition}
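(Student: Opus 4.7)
The plan is to exhibit $P$ by taking the blue curve $\beta$ of Figure~\ref{Fig:simpleannulus} as the companion unknot defining a solid-torus complement. From the annulus twisting construction, $\beta$ sits in $S^3$ as an unknot disjoint from $K_{t_0}$ and bounds an obvious disk meeting $K_{t_0}$ transversely in a single point, so it has linking number one with $K_{t_0}$. Setting $V:=S^3\smallsetminus\nu(\beta)$ (a solid torus in $S^3$) and letting $P\subset V$ be the embedded copy of $K_{t_0}$, we obtain a winding-number-one pattern with $P(U)\simeq K_{t_0}$ immediately.

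To check that $P$ is dualizable I will apply Proposition~\ref{Prop:examples of dualizable patterns}: it suffices to verify that $V^*:=(S^1\times S^2)\smallsetminus\nu(\widehat{P})$ is a solid torus, since homology then forces the isotopy class to be $+\widehat{\lambda_V}$. Rewriting, $V^*=S^3_0(K_{t_0})\smallsetminus\nu(\beta)$, and Osoinach's pair homeomorphism $\phi$ gives $V^*\cong S^3_0(K)\smallsetminus\nu(\alpha)$. Since the meridian disk of $\alpha$ in $S^3$ meets $K$ transversely once, after 0-surgery on $K$ it can be tubed off using the meridian disk of the surgery solid torus, producing an embedded disk bounded by $\alpha$ in $S^3_0(K)$; together with $[\alpha]$ generating $H_1(S^3_0(K))$, this forces $S^3_0(K)\smallsetminus\nu(\alpha)$ to be a solid torus, establishing dualizability of $P$.

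It remains to identify $P^*$ with $K$ as a pattern. By the construction in the proof of Proposition~\ref{Prop:examples of dualizable patterns}, $P^*$ is the curve $\widehat{\lambda_V}$ (a push-off of $\mu_\beta$) in $V^*$, with the solid-torus structure determined by declaring $\widehat{\lambda_P}=\lambda_{K_{t_0}}$ to be the longitude. Tracing these curves across the identification $V^*\cong S^3_0(K)\smallsetminus\nu(\alpha)$ provided by $\phi$, the push-off of $\mu_\beta$ corresponds to a push-off of $\mu_\alpha$, and the surgery solid torus formerly occupied by $K$ becomes the core region of $V^*$. After undoing the 0-surgery at $K$, $V^*$ is identified with the standard solid torus $S^3\smallsetminus\nu(\alpha)$ containing $K$, under which $P^*$ is precisely $K$ viewed as a pattern, so $P^*(U)\simeq K$.

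The main technical obstacle will be the meridian/longitude bookkeeping in this last step: one must confirm that the chain of identifications $V^*\cong S^3_0(K)\smallsetminus\nu(\alpha)\cong S^3\smallsetminus\nu(\alpha)$ takes the abstract longitude/meridian pair of the solid torus $V^*$ to the standard framing curves on $\partial\nu(\alpha)$, so that $P^*$ recovers exactly $K$ rather than a twist of $K$. The concrete annulus presentation data, which supplies an explicit disk for $\alpha$ meeting $K$ transversely once, is precisely the input that makes this framing trace-through possible.
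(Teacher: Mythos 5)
There are several genuine gaps here, and they compound. First, your starting claim that the blue curve bounds ``an obvious disk meeting $K_{t_0}$ transversely in a single point'' cannot be right: if the curve bounded a disk hitting $K_{t_0}$ geometrically once, then $P$ would have geometric winding number one, i.e.\ it would be a connected-sum pattern, hence self-dual, and you would conclude $P^*(U)\simeq P(U)\simeq K_{t_0}$ rather than $K$. The curve actually used (the paper's $\beta'$ of Figure~\ref{Fig:Pisdual}) links $K_{t_0}$ only \emph{algebraically} once, and dualizability of $P$ is a nontrivial geometric fact: the paper verifies it by exhibiting a single handle slide of $K_{t_0}$ over the $0$-framed $\beta'$ that turns $(K_{t_0},\beta')$ into a Hopf link, which is exactly the criterion of Proposition~\ref{Prop:examples of dualizable patterns}. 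Second, your rewriting of $V^*=(S^1\times S^2)\smallsetminus\nu(\widehat{P})$ is wrong. Since $V=S^3\smallsetminus\nu(\beta')$ has $\mu_V=\lambda_{\beta'}$, filling along $\mu_V$ performs $0$-surgery on $\beta'$, so $V^*=S^3_0(\beta')\smallsetminus\nu(K_{t_0})$, not $S^3_0(K_{t_0})\smallsetminus\nu(\beta')$. The manifold you work with, $S^3_0(K_{t_0})\smallsetminus\nu(\beta)$, is $(V\smallsetminus\nu(P))$ filled along $\lambda_P$; via Osoinach's pair homeomorphism it is $S^3_0(K)\smallsetminus\nu(\alpha)\cong S^3\smallsetminus\nu(K)$, a knot exterior, which is a solid torus only when $K$ is unknotted. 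So your dualizability check is applied to the wrong filling, and the conclusion you draw about that filling is false. Relatedly, the sub-argument that $\alpha$ bounds an embedded disk in $S^3_0(K)$ is internally inconsistent: the punctured disk has inner boundary a meridian of $K$, which after $0$-surgery is the \emph{core} of the surgery solid torus (the curve that bounds there is $\lambda_K$), so it cannot be capped off; and indeed $[\alpha]$ generates $H_1(S^3_0(K))\cong\Z$, so it bounds nothing.

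Finally, even granting dualizability, your identification of $P^*$ with $K$ is exactly the ``framing bookkeeping'' you defer, and as written it leans on the erroneous identification of $V^*$. The paper's route avoids this entirely: from Definition~\ref{Defn:dualpattern}, filling $V\smallsetminus\nu(P)$ along $\lambda_P$ is orientation-preservingly homeomorphic to $S^3\smallsetminus\nu(P^*(U))$, and the chain $S^3\smallsetminus\nu(K)\cong S^3_0(K)\smallsetminus\nu(\alpha)\cong S^3_0(K_{t_0})\smallsetminus\nu(\beta)\cong S^3\smallsetminus\nu(P^*(U))$ together with the Gordon--Luecke knot complement theorem \cite{GL89} gives $K\simeq P^*(U)$ without tracing meridians and longitudes by hand. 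To repair your proof you would need to (i) verify dualizability by showing $\widehat{P}$ becomes $S^1\times\{pt\}$ in $S^3_0(\beta')$, which requires the explicit local picture near the annulus (the single slide), and (ii) either carry out the meridian/longitude trace-through carefully or invoke \cite{GL89} as the paper does.
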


\begin{proof}
Define the link $(K_{t_0}, \beta ')$ in $S^3$ to be the link on the left hand side of Figure \ref{Fig:Pisdual}. 
\begin{figure}[h]
\includegraphics[height=4.5cm]{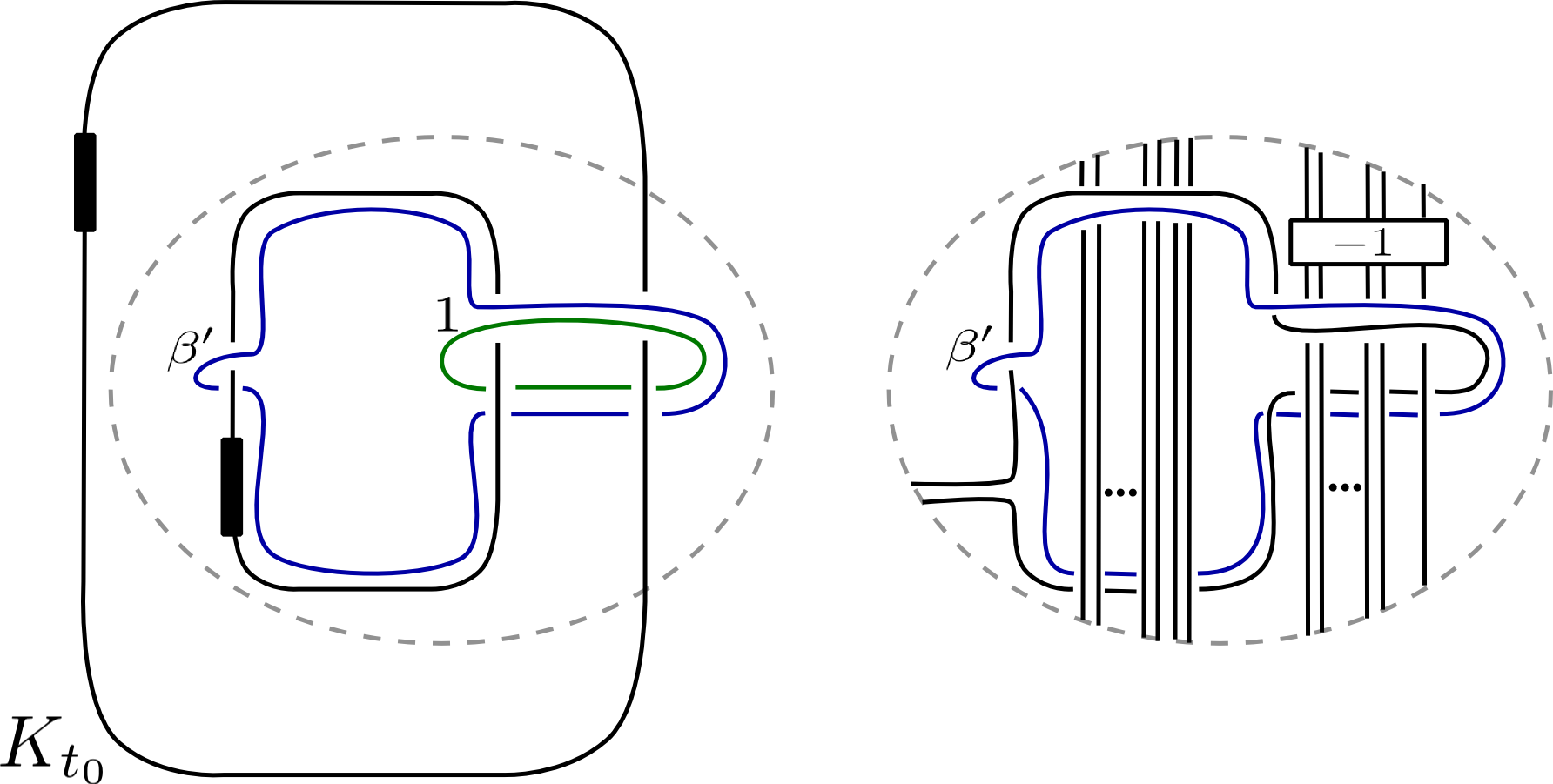}
\caption{$(K_{t_0}, \beta')$ is the result of an annulus twist (left), with schematic depiction of $K_{t_0}$ near $\beta'$ (right).}
\label{Fig:Pisdual}
\end{figure}
Consider the pattern $P$ obtained by restricting $K_{t_0}$ to the (standard) solid torus $V=S^3 \smallsetminus \nu(\beta ')$, which of course has the property that $P(U)\simeq K_{t_0}$. We claim that $P$ is dualizable.  By Proposition \ref{Prop:examples of dualizable patterns} we know that $P$ is dualizable if and only if the link $(K_{t_0}, \beta ')$ can be made isotopic to the Hopf link by sliding $K_{t_0}$ over a 0-framed $\beta '$ finitely many times. To see that $(K_{t_0}, \beta ')$ satisfies this, consider the right hand side of Figure \ref{Fig:Pisdual} in which we have drawn $(K_{t_0}, \beta')$ locally. We see that a single slide of $P$ over $\beta'$ takes $P$ to a core of the solid torus defined by $\beta'$. Therefore $P$ is dualizable and there exists some dual pattern $P^*$. 

Since we have a homeomorphism of pairs $\phi:(S^3_0(K),\alpha)\to(S^3_0(K_{t_0}),\beta)$ we have $S^3\smallsetminus\nu(K)\cong S^3_0(K) \smallsetminus \nu(\alpha)\cong S^3_0(K_{t_0}) \smallsetminus \nu(\beta)$. From Definition~\ref{Defn:dualpattern} we have that  $S^3_0(K_{t_0}) \smallsetminus \nu(\beta)$ is homeomorphic to $V\smallsetminus \nu(P)$ Dehn filled along $\lambda_P$, which is homeomorphic, via a map which preserves orientations, to  $S^3 - \nu(P^*(U))$. Therefore $S^3\smallsetminus\nu(K)\cong S^3 \smallsetminus \nu(P^*(U))$, and by the Knot Complement Theorem of \cite{GL89} we conclude that $K\simeq P^*(U)$. 
\end{proof}
It is not too hard to show that the above homeomorphism $f:S^3\smallsetminus\nu(K)\to S^3 \smallsetminus \nu(P^*(U))$ takes meridians to meridians if one prefers not to appeal to \cite{GL89}, but for the sake of brevity we do not pursue that here. 

Observe in Figure \ref{Fig:annulusexample} that each of the knots $(\tau_nJ)(U)$, where $J$ is the pattern from Example \ref{Exl:Gompf Miyazaki example}, admits a annulus presentation. 
\begin{figure}[h]
\includegraphics[height=4cm]{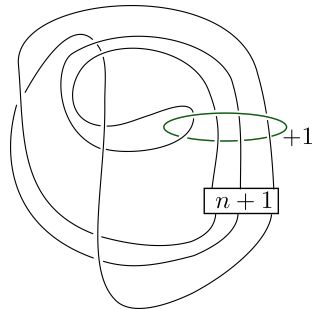}
\caption{The knot $(\tau_nJ)(U)$ admits an annulus presentation.}
\label{Fig:annulusexample}
\end{figure}
These examples, together with Theorem~\ref{thm:MainTheoremA}, give the following corollary to Proposition \ref{Prop:annulusdual}. 
\begin{corollary}\label{cor:annulustrace}
There are infinitely many knots $K$ and $K'$ such that $K$ admits an annulus presentation and $K'$ is obtained from $K$ by an annulus twist but such that $K$ and $K'$ are not smoothly concordant. 
\end{corollary}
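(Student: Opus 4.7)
The plan is to use the infinite family of non-concordant pairs constructed in the proof of Theorem~\ref{thm:MainTheoremA}: for each $k \in \N$, set $K_k = (\tau_{2k-1}J)(U)$ and $K_k' = (\tau_{-2k-3}J)(U)$, where $J$ is the dualizable pattern from Example~\ref{Ex:J}. By the Alexander polynomial argument referenced there, these give infinitely many distinct pairs, and by Theorem~\ref{thm:MainTheoremA} no pair is smoothly concordant. So the only remaining tasks are to verify that each $K_k$ admits an annulus presentation and that $K_k'$ arises from $K_k$ via an annulus twist.

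The first task is handled essentially for free by Figure~\ref{Fig:annulusexample}, which exhibits an annulus presentation of $(\tau_n J)(U)$ for every $n \in \Z$. For the second task, recall from Example~\ref{Ex:J} and Proposition~\ref{Prop:twist} that
\[(\tau_{2k-1}J)^* = \tau_{-(2k-1)}(J^*) = \tau_{-(2k-1)}\tau_{-4}J = \tau_{-2k-3}J.\]
Setting $P := \tau_{2k-1}J$, we have $P(U) = K_k$ and $P^*(U) = K_k'$. The plan is then to apply Proposition~\ref{Prop:annulusdual} to the annulus presentation of $K_k$ from Figure~\ref{Fig:annulusexample}, producing a dualizable pattern $Q$ such that $Q^*(U) \simeq K_k$ and such that the associated annulus twist of $K_k$ is $Q(U)$. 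By identifying the solid torus $V = S^3 \smallsetminus \nu(\beta')$ from Figure~\ref{Fig:Pisdual} with the natural solid torus containing $\tau_{2k-1}J$ in the annulus presentation of Figure~\ref{Fig:annulusexample}, one expects to check directly that $Q \simeq P^* = \tau_{-2k-3}J$ as patterns, so that the annulus twist of $K_k$ is precisely $Q(U) = K_k'$.

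The main obstacle I anticipate is this last diagrammatic identification of $Q$ with $\tau_{-2k-3}J$: it is not automatic that the dualizable pattern produced by Proposition~\ref{Prop:annulusdual} from the specific annulus presentation of Figure~\ref{Fig:annulusexample} agrees with the dual of $\tau_{2k-1}J$ rather than merely satisfying $Q^*(U) \simeq K_k$. Resolving this amounts to tracing through the isotopies of the proof of Proposition~\ref{Prop:annulusdual} applied to this particular annulus presentation while keeping careful track of orientations and framings. A cleaner alternative which avoids any appeal to uniqueness of patterns would be to compute the annulus twist of $(\tau_n J)(U)$ directly by performing it on the diagram in Figure~\ref{Fig:annulusexample} and reading off the resulting knot; a calculation should confirm that this twist sends $(\tau_n J)(U)$ to $(\tau_{-n-4}J)(U)$, which in the case $n = 2k-1$ yields $K_k'$ as desired.
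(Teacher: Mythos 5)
Your proposal follows essentially the same route as the paper: the paper derives Corollary~\ref{cor:annulustrace} by combining the non-concordant pairs $K_k=(\tau_{2k-1}J)(U)$, $K_k'=(\tau_{-2k-3}J)(U)$ of Theorem~\ref{thm:MainTheoremA} with the annulus presentations of $(\tau_nJ)(U)$ in Figure~\ref{Fig:annulusexample} and Proposition~\ref{Prop:annulusdual}. The identification you flag as the remaining obstacle --- that the annulus twist of $(\tau_nJ)(U)$ for this presentation is $(\tau_{-n-4}J)(U)$, equivalently that the pattern produced by Proposition~\ref{Prop:annulusdual} is $\tau_{-n-4}J$ rather than some other dualizable pattern with the same satellite of $U$ --- is exactly the diagrammatic observation the paper leaves implicit in its figures, so your plan matches the paper's argument.
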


\section{An interesting family of knots.}\label{section:interestingfamily}

We close by considering the family of knots $\{ (\tau_nJ)(U)\}_{n \in \Z}$, where 
$J$ is the pattern from Example \ref{Exl:Gompf Miyazaki example}. An explicit computation with Seifert matrices which we omit shows that, for $n\geq -2$,
\[ 
\Delta_{(\tau_nJ)(U)}(t)=
(t-1)^2(t^{2n+4}+1)+(2t^2-3t+2)t^{n+2}
\] Since the Alexander polynomial of $K$ is an invariant of the 0-surgery of $K$, Theorem \ref{Thm:diffeosurgeries} and Proposition \ref{Prop:twist} allow us to compute $\Delta_{(\tau_nJ)(U)}(t)$ for all $n\in\Z$. This shows that $(\tau_nJ)(U)$ is distinct from $(\tau_{n'}J)(U)$ for $n'\not\in \{n, -4-n\}$. It certainly follows from the proof of  Theorem \ref{thm:MainTheoremA} that this remains true when $n'=-4-n$ and $n$ is odd. We expect that when $n'=-4-n$ and $n$ is even we will still have $(\tau_nJ)(U)\not\simeq (\tau_{n'}J)(U)$ but do not pursue that here.

What makes this family remarkable is that for any pair  of integers $n$ and $n'$ Example \ref{Exl:Gompf Miyazaki example} and Theorem \ref{Thm:diffeontraces} show that there is an integer $r$  such that $X_r((\tau_nJ)(U))\cong X_r((\tau_{n'}J)(U))$. In fact, for any $n$ there is at most a single integer $s$ such that $X_s((\tau_nJ)(U))$ is not diffeomorphic to $X_s((\tau_mJ)(U))$ for some $m\in \Z$. 

We also point out that all the knots in this family have four genus equal to one. To show the four genera are all one or less we point out that for any $n$ there is a band move taking $(\tau_nJ)(U)$ to a Hopf link. For $n$ even, we can compute that the determinant of $(\tau_nJ)(U)$ equals 15 and hence $(\tau_nJ)(U)$ is not slice. For $n$ odd, we note that by Theorem~\ref{Thm:diffeotraces} and Corollary~\ref{Cor:slicetrace} it suffices to show that $(\tau_nJ)(U)$ is not slice for $n\leq -1$. However, we will see in Proposition~\ref{prop:monotonicity} and Example~\ref{Example:-1twist} that when $n \leq -1$ is odd we have
\[ d(\Sigma_2((\tau_nJ)(U)) \leq d(\Sigma_2((\tau_{-1}J)(U))) =-2<0,\]
and it follows that $(\tau_nJ)(U)$ is not slice. 

In fact, we have $g_4((\tau_{n}J)(U)\# - (\tau_{n'}J)(U)) \leq 1$ for any pair of integers $n$ and $n'$, as shown by the four band moves shown in Figure~\ref{Fig:fourgenusbound}, which take $(\tau_{n}J)(U)\# - (\tau_{n'}J)(U)$ to a three component unlink. 
\begin{figure}[h]
\includegraphics[height=4cm]{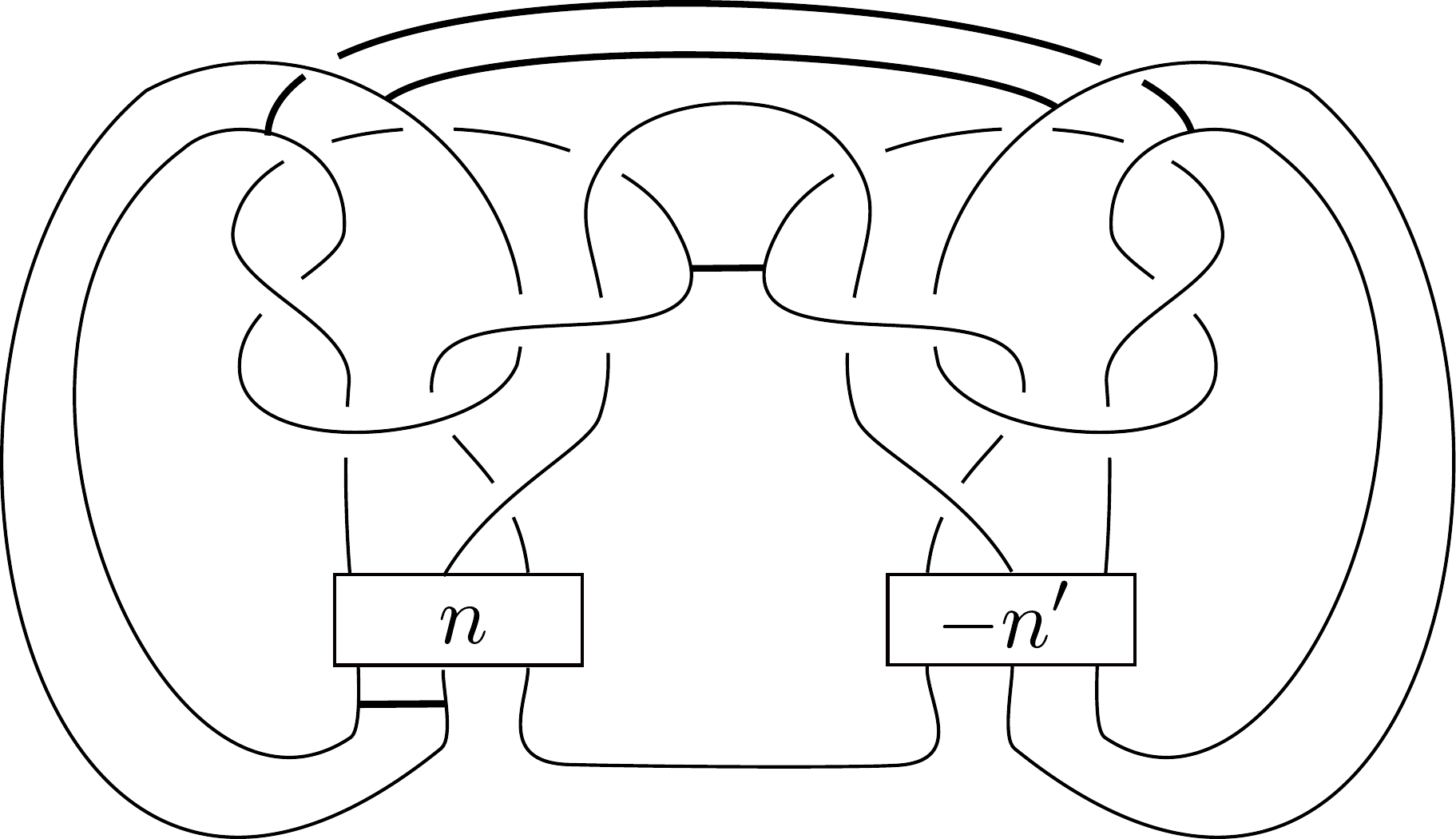}
\caption{The four genus of $(\tau_{n}J)(U)\# - (\tau_{n'}J)(U)$ is no more than 1.}
\label{Fig:fourgenusbound}
\end{figure}
(In fact, one can show that $g_4((\tau_{n}J)(K)\# - (\tau_{n'}J)(U)\#-K) \leq 1$ by a virtually identical argument.) Since for odd $n \neq n'$ we have that $(\tau_{n}J)(U)$ and $(\tau_{n'}J)(U)$ are not concordant, this gives an infinite family of knots any two of which are distance exactly 1 from each other under the metric $d(K,J):= g_4(K \#-J)$. 

\bibliography{satellite}

\affiliationone{
   A. Miller and L. Piccirillo\\
   University of Texas at Austin\\
      Department of Mathematics\\
      2515 Speedway. Austin, TX 78712
   USA
   \email{amiller@math.utexas.edu\\
   lpiccirillo@math.utexas.edu}}
\end{document}